\pgfplotsset{compat=1.15}
\newtheorem{theorem}{Theorem}
\newtheorem{lemma}[theorem]{Lemma}
\newtheorem{proposition}[theorem]{Proposition}
\newtheorem{definition-proposition}[theorem]{Definition-Proposition}
\newtheorem{remark}[theorem]{Remark}
\newcommand{\N} {\mathbb{N}}
\newcommand{\C} {\mathbb{C}}
\newcommand{\R} {\mathbb{R}}
\DeclarePairedDelimiter\abs{\lvert}{\rvert}%
\DeclarePairedDelimiter\norm{\lVert}{\rVert}%
\newcommand{\tnorm}[1]{{\left\vert\kern-0.25ex\left\vert\kern-0.25ex\left\vert #1 
    \right\vert\kern-0.25ex\right\vert\kern-0.25ex\right\vert}}
\let\oldabs\abs
\def\abs{\@ifstar{\oldabs}{\oldabs*}}
\let\oldnorm\norm
\def\norm{\@ifstar{\oldnorm}{\oldnorm*}}
\newcommand*\diff{\mathop{}\!\mathrm{d}}
\newcommand{\Tr}{\operatorname{Tr}}
\newcommand{\sinc}{\operatorname{sinc}}
\title{\sf Minimal time of magnetization switching in small ferromagnetic ellipsoidal samples}
\author[1]{Rapha\"el C\^ote\footnote{\texttt{\small raphael.cote@unistra.fr}} }
\author[1]{Cl\'ementine Court\`es \footnote{ \texttt{\small clementine.courtes@unistra.fr}} }
\author[1]{Guillaume Ferri\`ere \footnote{ \texttt{\small guillaume.ferriere@unistra.fr}} }
\author[1,2]{Yannick Privat \footnote{\texttt{\small  yannick.privat@unistra.fr}} }
\affil[1]{IRMA, Universit\'e de Strasbourg, CNRS UMR 7501, Inria, 7 rue Ren\'e Descartes, 67084 Strasbourg, France. }
\affil[2]{Institut Universitaire de France (IUF)}
\begin{document}
\maketitle

\begin{abstract}
In this paper, we consider a ferromagnetic material of ellipsoidal shape. 
The associated magnetic moment then has two asymptotically stable opposite equilibria, of the form $\pm\overline{m}$. In order to use these materials for memory storage purposes, it is necessary to know how to control the magnetic moment. We use as a control variable a spatially uniform external magnetic field and consider the question of flipping the magnetic moment, i.e., changing it from the $+\overline{m}$ configuration to the $-\overline{m}$ one, in minimal time. Of course, it is necessary to impose restrictions on the external magnetic field used. We therefore include a constraint on the $L^\infty$ norm of the controls, assumed to be less than a threshold value $U $. 
We show that, generically with respect to the dimensions of the ellipsoid, there is a minimal value of $U $ for this problem to have a solution. We then characterize it precisely. Finally, we investigate some particular configurations associated to geometries enjoying symmetries properties and show that in this case the magnetic moment can be controlled in minimal time without imposing a threshold condition on $U $. 
\end{abstract}

\noindent\textbf{Keywords:} ferromagnetic materials, Landau-Lifshitz equation, optimal control, minimal time

\medskip

\noindent\textbf{AMS classification:} 49J15, 49J30, 35Q60, 78M50.


\section{Introduction}
\subsection{The Landau-Lifshitz equation for ellipsoidal ferromagnetic samples}
Ferromagnetic materials have come into common use in the last few decades, especially since they are found in devices used to store digital information such as magnetic tapes or hard disks, but also in magnetic chips called \emph{Magnetic Random Access Memory} (MRAM). These chips have many advantages over their silicon counterparts, in particular that of requiring energy only to change the value bits and not to maintain the storage itself. This is probably one of the most challenging applications since it opens the door towards new spintronic applications and storage technologies while allowing a very fast access to information (see, e.g., \cite{parkin2008magnetic}). 

The magnetic moment of a ferromagnetic material represented by a domain $\Omega\subset\R^3$ is usually modelled as a time-varying vector field
\[
m:\R\times\Omega\rightarrow\mathbb{S}^2,
\]
where $\mathbb{S}^2$ is the unit sphere of $\R^3$, the evolution of which is driven by the so-called \emph{Landau-Lifshitz equation} (see \cite{landau2013electrodynamics})
\begin{equation}\label{eqLL}
 \frac{\partial m}{\partial t}=-m\wedge h(m)-\alpha m\wedge (m\wedge h(m)),
\end{equation}
where the \emph{effective field} $h(m)$ is defined by
\[
h(m) = 2A\Delta m+h_d(m)+h_{\mathrm{ext}}
\]
with $\alpha>0$, a constant (in time and space) damping coefficient which is characterized by the material. We refer for instance to \cite{hubert2008magnetic,brown1963micromagnetics} for additional explanations.
The constant $A>0$ is the exchange constant, and can be assumed to be equal to $A=1/2$ without loss of generality, with a normalization argument. The \textit{demagnetizing field} $h_d(m)$ is the solution of the equations 
\[
\left\{
\begin{aligned}
&\mathrm{div}(h_d(m)+m)=0\\
&\mathrm{curl}(h_d(m))=0
\end{aligned}\right.\quad \text{ in }\mathcal{D}'(\R^3)
\]
where $m$ is extended to $\R^3$ by $0$ outside $\Omega$ and  $\mathcal{D}'(\R^3)$ denotes the space of distributions on $\R^3$. The field $h_{\mathrm{ext}}$ is an external one, for instance it can be an external magnetic field.

Note that it is possible to complete and specify this physical model by adding other relevant terms, for example by taking into account the anisotropic behavior of the crystal that composes the ferromagnetic material.

Finally, it is standard to assume homogeneous Neumann boundary conditions on the magnetization  on the boundary of $\Omega$.

\medskip

In this article, we will consider a ferromagnetic sample of ellipsoidal shape, and the magnetization $m$ and external field $h_{\rm ext}$ both spatially uniform. Indeed, ellipsoidal domains have been much studied in the literature dedicated to ferromagnetism \cite{osborn1945demagnetizing,di2016newtonian,takahashi2017ellipsoids}: on the one hand, they cover a large variety of geometrical shapes, and on the other hand,  they are the only known bodies that can be uniformly magnetized in the presence of a spatially uniform inducing field. From the mathematical point of view, it is nice to consider such samples because the demagnetizing field $h_d$ appearing in the Landau-Lifschitz equation can be determined in an explicit way. 

Let us be more precise and clarify the model obtained in this case. In all the following of this article, we will denote $\Omega$ the ellipsoid of $\R^3$ of semiaxes $a_1>0$, $a_2>0$ and $a_3>0$, and a basis $(O;e_1,e_2,e_3)$ chosen so that the Cartesian equation of $\Omega$ reads
\begin{equation}\label{eq_ellips}
\frac{x^2}{a_1^2}+\frac{y^2}{a_2^2}+\frac{z^2}{a_3^2}=1.
\end{equation}
An illustration of the ellipsoid $\Omega$ is shown in Figure \ref{fig:ellipsoide}.
According to \cite{osborn1945demagnetizing,di2016newtonian}, for uniform (in space) magnetizations $m$ on $\Omega$, the demagnetizing field $h_d(m)$ can be explicitly computed and reads 
\[
h_d(m)=-Dm, \qquad\text{with}\quad D=\begin{pmatrix}
\gamma_1 & 0 & 0\\
0 & \gamma_2 & 0\\
0 & 0 & \gamma_3
\end{pmatrix} ,
\]
where $\gamma_i$ ($i=1,2,3$) denotes a positive constant depending only on the semiaxes $a$, $b$ and $c$ (we provide the precise dependence in Appendix~\ref{append:demag_ellips}).

One can easily infer from this result that, provided that the external field $h_{\rm ext}$ and the initial magnetization $m^0$ are constant in space,  so is the magnetic moment $m$ solving the Landau-Lifshitz equation \eqref{eqLL} completed with homogeneous Neumann boundary conditions.

As a consequence, the Landau-Lifschitz equation with a time-dependent external magnetic field $h_{\rm ext}$ reads as the ordinary differential system
\begin{equation}\label{LL:ODE}
\left\{
\begin{aligned}
& \dot{m}  = \alpha \left(h_0(m)-(h_0(m)\cdot m) m\right)-m\wedge h_0(m) \quad \text{in }(0,T) \\
& m(0) =m^0
\end{aligned}\right.
\end{equation}
where the dotted notation $\dot{m}$ stands for the time derivative of $m$, $h_0(m)=-Dm+h_{\rm ext}$, $T>0$, $m(t)\in\mathbb{S}^2\subset\R^3$, $D=\operatorname{diag}([\gamma_1,\gamma_2,\gamma_3])$ denotes a diagonal matrix with positive coefficients.
Up to a change of basis, we will also assume without loss of generality that
\begin{equation}\label{hyp:alpha}
0\leq \gamma_1\leq \gamma_2\leq\gamma_3\leq 1.
\end{equation}

In what follows, we will assume that the ferromagnetic particle is subjected to a spatially uniform external magnetic field $h_{\rm ext}$, and we are interested in two asymptotically stable stationary states of the resulting system, denoted $\overline{m}$. We seek to answer the following question: 
\begin{quote}
\textit{Given a maximum value $U $ of the norm of the field $h_{\rm ext}$ at all times, can we determine whether there exists such a field flipping the magnetic spin from $\overline{m}$ to $-\overline{m}$ in minimal time?}
\end{quote}

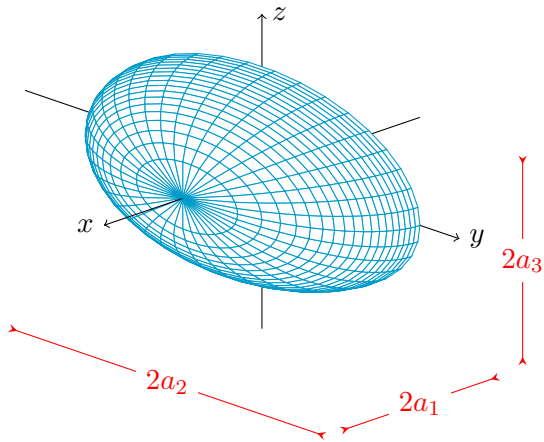
\begin{figure}[htbp]
\begin{center}
\begin{tikzpicture}[scale=2]
\pgfsetlayers{pre main,main,axis foregound}
\begin{axis}
[view={135}{20},colormap={blue}{
            color=(cyan) color=(cyan)
        },axis lines=none,axis equal,set layers=standard,
enlargelimits,domain=0:2,samples=20, z buffer=sort,
]

\pgfonlayer{axis background}
\draw[-] (axis cs:0,0,-1.5)--(axis cs:0,0,-1);
\draw[-] (axis cs:0,-3,0)--(axis cs:0,-2,0);
\draw[-] (axis cs:-2,0,0)--(axis cs:-1,0,0);
\addplot3[draw=none] coordinates{(0,0,-2) (0,0,2)};
\addplot3[draw=none] coordinates{(0,-2.5,0) (0,2.8,0)};
\endpgfonlayer
\pgfonlayer{main}
\addplot3 [surf,draw=cyan,fill=white,domain=0:1,samples=20,
domain y=00:180] ({x},{-2*cos(y)*sqrt(1-x^2)},{-sin(y)*sqrt(1-x^2)});
\addplot3 [surf,draw=cyan,fill=white,domain=0:1,
domain y=0:180,on layer=axis foreground] ({x},{2*cos(y)*sqrt(1-x^2)},{sin(y)*sqrt(1-x^2)});
\coordinate(x1) at (axis cs:1,0,0);
\coordinate(x2) at (axis cs:2,0,0);
\coordinate(y1) at (axis cs:0,1.9,0);
\coordinate(y2) at (axis cs:0,2.5,0);
\coordinate(z1) at (axis cs:0,0,0.9);
\coordinate(z2) at (axis cs:0,0,1.5);
\coordinate(x1bis) at (axis cs:-1,2,-1.7);
\coordinate(x2bis) at (axis cs:1,2,-1.7);
\coordinate(y1bis) at (axis cs:1.2,-2,-1.7);
\coordinate(y2bis) at (axis cs:1.2,2,-1.7);
\coordinate(z1bis) at (axis cs:0,3.3,-1);
\coordinate(z2bis) at (axis cs:0,3.3,1);
\endpgfonlayer
\end{axis}
\draw[->] (x1)--(x2)node[left]{$x$};
\draw[->] (y1)--(y2)node[right]{$y$};
\draw[->] (z1)--(z2)node[right]{$z$};
\draw [>-< , 
    > = stealth, red ] (x1bis) -- (x2bis) node [midway, fill = white] {$2a_1$} ;
\draw [>-< , 
    > = stealth, red ] (y1bis) -- (y2bis) node [midway, fill = white] {$2a_2$} ;
    \draw [>-< , 
    > = stealth, red ] (z1bis) -- (z2bis) node [midway, fill = white] {$2a_3$} ;
    \end{tikzpicture}
\end{center}
\caption{The ellipsoid shaped ferromagnetic sample \label{fig:ellipsoide}}
\end{figure}

\subsection{State of the art and structure of the article}

The development of the use of ferromagnetic materials has led to the emergence of new storage possibilities, and consequently to a renewed interest of the scientific community around the control of EDO/EDP on this topic.

The use of an external magnetic field to control a ferromagnetic system is a very present issue in the literature of the field (see for instance \cite{MR2375581,MR3348399,MR1773088,MR3011326}).

Many works have focused on both the derivation of relevant models, i.e. sufficiently close to the physics, but also simple enough to be exploited mathematically, and on the related optimization issues. Many studies are devoted to these modeling questions, to the obtaining of exploitable optimality conditions leading to numerical simulations. Thus, in the same spirit as the present study, the authors of \cite{MR4419351} seek to flip the magnetic spin using electric current injections. Let us mention in the same vein the works \cite{MR3407264,MR3961301} also addressing similar issues: minimization of the distance to a target state with a fixed time horizon, addition of stochastic term in the model, search for a feedback and numerical analysis of the considered problems. 

Recent progress has been made in the understanding of the control (exact and approximate) of ellipsoidal samples/networks : \cite{dubey2019controllability,5399599,agarwal2011control}. Our study has been particularly motivated by \cite{alouges2009magnetization}, in which it is notably proved that, when the size of an open bounded by $\Omega$ tends to 0, then we find a uniform magnification in the domain, which lends itself to the study of ellipsoids. 

\paragraph{Structure of the article. }In this paper, we are interested in a single ferromagnetic particle of ellipsoidal shape in $\R^3$. We seek to perform a magnetic moment reversal in minimal time, using an external magnetic field as a control of the resulting physical system. 

We model this issue in section~\ref{sec:OCP}, imposing a maximum $L^\infty$ norm on the control translated using the parameter $U>0$, reflecting the difficulty and cost of using very high magnetic fields. In the absence of additional symmetries on the geometry of the system, we show the existence of a minimal threshold on $U$ for the minimum time problem to have a solution. We refine this result when additional symmetries are assumed on the material geometry. The main results of this paper are gathered in the section~\ref{sec:mainRes}. The section~\ref{sec:minOptPrelim} contains the foundation of the proofs of the main results: indeed, we state necessary and sufficient conditions guaranteeing the well-posedness of the time-optimal problem and write the necessary conditions of optimality to the first order using the Pontryagin maximum principle. 

The proofs of the main results are contained in the sections~\ref{proof:theo2}, \ref{sec:ProofSym} and \ref{proof:theo8}. Finally, some numerical simulations are listed in the section \ref{sect:Conclusion and perspectives} to illustrate the qualitative behavior of the solutions obtained in theoretical theorems. The appendices contain additional information and/or secondary calculations. Appendix \ref{append:demag_ellips} contains the calculation of the demagnetizing field in the case of a ferromagnetic ellipsoid sample. Appendix \ref{append:e1AS} contains the proof that $-e_1$ is indeed the only asymptotically stable state for equation \eqref{LL:ODE}. Finally, Appendix \ref{append:comp} contains the calculations of the explicit constants in the case $\gamma_1<\gamma_2$.

\paragraph{Notations.}
In the whole article, $|\cdot|$ denotes the standard euclidean norm on $\R^3$ (or $\R^d$), and its inner product in $\R^d$ is denoted with a dot.

We are essentially interested in a control problem where the control function is the external field: we abide by the usual convention, and denote 
\[ h_{\rm ext}=u. \]

\section{Existence of a minimal switching time}
Let us recall that, as mentioned in the introduction, we will consider a  ferromagnetic sample whose shape $\Omega$ is the ellipsoid with Cartesian equation \eqref{eq_ellips}. The dynamics of the magnetic moment $m(\cdot)$, equal to $m_0$ at the initial time, is hence driven by the simplified Landau-Lifshitz equation \eqref{LL:ODE}.

\subsection{Towards an optimal control problem}\label{sec:OCP}

The main issue we want to tackle reads
\begin{quote}
{\it Given a steady-state $\overline{m}$ of \eqref{LL:ODE} in $\mathbb{S}^2$, can we achieve a reversal by solving an optimal control problem, i.e. steering the system from $m(0)=\overline{m}$ to $m(T)=-\overline{m}$ while minimizing $T$?}
\end{quote}
In what follows, we will consider particular stationary states: $\overline{m}=\pm e_1$. It is proved in appendix~\ref{append:e1AS} that these equilibria are asymptotically stable when $\gamma_1 < \gamma_2$. Therefore, they can be used as magnetic spin orientation for memory storage purposes. 
We will denote by $u(\cdot)$ the external (spatially uniform) magnetic field imposed on the system. This is the control variable in this problem. The question is then to ask if it is possible to steer the solution $m_u$ of the system \eqref{LL:ODE} associated to the control $u(\cdot)$ and to the initial data $m_u(0)=e_1$ until $m_u(T)=-e_1$, in minimal time. 

Of course, it is necessary to add physical constraints to this problem: if one imposes no restrictions on the choice of admissible controls, it is likely that the minimal time problem will have a solution, reached by unrealistic controls.
For this reason, we will assume in what follows the constraint
\begin{equation}\label{constr:barU}\tag{$\mathscr{C}_{T,U }$}
|u(t)|\leq U \quad \text{a.e. in }(0,T), 
\end{equation}
in order to limit the choice of controls to realistic possibilities.

All in all, the problem we aim at investigating reads as follows.

\begin{quote}
\textbf{Minimal time problem: }{\it let $U >0$ and assume that $m_0=e_1$. The problem reads
\begin{equation}\label{OCP:minT}\tag{$\mathscr{P}_0$}
\mathcal{T}_{U } \coloneqq \inf_{(T,u)\in \mathcal{O}_{U }}T,
\end{equation}
where 
$$
 \mathcal{O}_{U }=\{(T,u)\mid T\in \R_+, \ u\in L^\infty(0,T)\text{ satisfies }\eqref{constr:barU}\text{ and }m_u(T)=-e_1\},
$$
with $m_u$, the solution to \eqref{LL:ODE} associated to the control function $u(\cdot)$ and the initial datum $e_1$.}
\end{quote} 

We will investigate the following issues:
\begin{itemize}
\item Does Problem~\eqref{OCP:minT} have an optimal solution for any value of $U >0$ ?
\item How to characterize all the solutions to this problem and understand their geometric dependence to the parameters $\gamma_i$, $i=1,2,3$?
\end{itemize}

\subsection{Main results}\label{sec:mainRes}

First, the minimization problem is indeed well-posed, meaning that the existence of an optimal solution is equivalent to the existence of a minimal trajectory.

\begin{theorem} \label{th:ex_min}
    Let $U  > 0$. The following properties are equivalent:
    \begin{itemize}
        \item[(i)] There exists an optimal pair $(\mathcal{T}_{U}, u) \in \mathcal{O}_{U }$ for Problem~\eqref{OCP:minT}.
        \item[(ii)] $\mathcal{T}_{U}$ is finite.
        \item[(iii)] $\mathcal{O}_{U }$ is nonempty.
    \end{itemize}
\end{theorem}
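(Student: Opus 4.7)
The implications $(\text{i})\Rightarrow(\text{ii})$ and $(\text{i})\Rightarrow(\text{iii})$ are immediate, as an optimal pair realizes $\mathcal{T}_{U}$, which is then both finite and attained in $\mathcal{O}_U$. The equivalence $(\text{ii})\Leftrightarrow(\text{iii})$ is also elementary: an infimum over an empty set equals $+\infty$, while an infimum over a nonempty set is bounded above by any of its elements. Thus the only substantive claim is $(\text{iii})\Rightarrow(\text{i})$, which I would establish by the direct method of the calculus of variations.

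The plan is to pick a minimizing sequence $(T_n,u_n)\in\mathcal{O}_U$ with $T_n\searrow\mathcal{T}_U$. Fix any $T_0>\sup_n T_n$ (which exists since $\mathcal{T}_U<\infty$), and extend each $u_n$ by $0$ on $(T_n,T_0)$; the associated solution $m_n\coloneqq m_{u_n}$ of \eqref{LL:ODE} is then defined on all of $[0,T_0]$. Since $\|u_n\|_{L^\infty(0,T_0)}\le U$, the Banach--Alaoglu theorem provides a subsequence $u_n$ converging weakly-$*$ in $L^\infty(0,T_0;\R^3)$ to some $u^*$, and $|u^*|\le U$ a.e.\ because the closed ball of radius $U$ is weak-$*$ closed.

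The key step is the passage to the limit in the state equation. Since $|m_n|\equiv 1$ and the right-hand side of \eqref{LL:ODE} is polynomial in $m_n$ and linear in $u_n$ with $|u_n|\le U$, the sequence $(m_n)$ is equi-Lipschitz on $[0,T_0]$; Arzel\`a--Ascoli yields a further subsequence with $m_n\to m^*$ uniformly. To identify $m^*$ with $m_{u^*}$, I would write \eqref{LL:ODE} in integral form as $m_n(t)=e_1+\int_0^t F(m_n)\,ds+\int_0^t B(m_n)u_n\,ds$, where $F(m)=-\alpha Dm+\alpha(Dm\cdot m)m+m\wedge Dm$ depends only on $m$ and $B(m)u=\alpha(u-(u\cdot m)m)-m\wedge u$ is linear in $u$ with continuous $m$-dependent coefficients. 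The first integral passes to the limit by uniform convergence of $m_n$; for the second, decompose $\int_0^tB(m_n)u_n\,ds=\int_0^t\bigl(B(m_n)-B(m^*)\bigr)u_n\,ds+\int_0^tB(m^*)u_n\,ds$, the first piece being bounded by $t\,U\,\|B(m_n)-B(m^*)\|_\infty\to 0$ and the second converging to $\int_0^t B(m^*)u^*\,ds$ by weak-$*$ convergence of $u_n$ against the continuous (hence $L^1$) test function $B(m^*)$. Hence $m^*=m_{u^*}$, and passing to the limit in $m_n(T_n)=-e_1$ using uniform convergence together with $T_n\to\mathcal{T}_U$ yields $m_{u^*}(\mathcal{T}_U)=-e_1$, so $(\mathcal{T}_U,u^*|_{[0,\mathcal{T}_U]})$ is an optimal pair.

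The main obstacle lies in the bilinear passage to the limit in $B(m_n)u_n$: since $u_n$ converges only in the weak-$*$ sense while it is multiplied by nonlinear expressions in $m_n$, it is crucial that the Landau--Lifshitz dynamics depend \emph{linearly} on the control, which allows one to pair a uniformly convergent sequence $B(m_n)$ with a weak-$*$ convergent sequence $u_n$. All other ingredients (weak-$*$ compactness, equi-Lipschitz bound on $m_n$, closedness of the admissibility ball) are standard.
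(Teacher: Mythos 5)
Your proof is correct, and the overall framework is the same as the paper's: take a minimizing sequence, get weak-$*$ compactness of the controls from Banach--Alaoglu, get uniform compactness of the states from an equi-Lipschitz bound via Arzel\`a--Ascoli, and pass to the limit in the integral form of the ODE, exploiting that the dynamics are affine in the control so that a weak-$*$ convergent control paired with a uniformly convergent state still passes to the limit. The one genuine difference is how you handle the varying time horizon: the paper rescales each trajectory to the fixed interval $[0,1]$ (setting $\widetilde m_n(s)=m_n(T_n s)$, so that $T_n$ appears as a scalar coefficient in the ODE), whereas you instead extend each control by zero to a common interval $[0,T_0]$ with $T_0>\sup_n T_n$ and then recover the terminal condition from $m_n(T_n)=-e_1$, $T_n\to\mathcal T_U$, and uniform convergence. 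The rescaling approach keeps the free time visible as a parameter (which the paper then uses to show $\mathcal T_U>0$ by evaluating the limit equation at $s=1$); your zero-extension approach is equally valid and slightly more direct, deriving $\mathcal T_U>0$ as a byproduct of $m_{u^*}(0)=e_1\neq -e_1=m_{u^*}(\mathcal T_U)$. Your explicit decomposition of the bilinear term $\int_0^t B(m_n)u_n$ into $\int_0^t(B(m_n)-B(m^*))u_n+\int_0^t B(m^*)u_n$ also spells out more carefully than the paper why the limit passage works; the paper just invokes linearity in $\widetilde h_0$ and weak-$*$ convergence of $\widetilde h_0(\widetilde m_n)$, which is the same idea in compressed form.
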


The behavior of the control system differs greatly depending on the values of the parameters $\gamma_i$ and more specifically on the values of $\gamma_1$ and $\gamma_2$. 

\begin{theorem} \label{th:ex_U_crit}
    Assume $\gamma_1 < \gamma_2$. Then there exists $U_\textnormal{crit} > 0$ such that
    \begin{itemize}
        \item for all $U \in (0, U_\textnormal{crit}]$, \eqref{OCP:minT} has no solution.
        \item for all $U > U_\textnormal{crit}$, \eqref{OCP:minT} has a solution.
    \end{itemize}
\end{theorem}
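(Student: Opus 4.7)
My approach is to identify $U_\textnormal{crit}$ with the infimum of the set $\mathcal{U} := \{U > 0 : \mathcal{O}_U \neq \emptyset\}$. Since any admissible control for $U$ remains admissible for all $U' \geq U$, the set $\mathcal{U}$ is automatically upward closed, so it suffices to prove three facts: (a) $\mathcal{U} \neq \emptyset$; (b) $\inf \mathcal{U} > 0$; and (c) $\inf \mathcal{U} \notin \mathcal{U}$.

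For (a), for $U$ large I construct a three-phase admissible control. A brief pulse $u = Ue_2$ destabilizes $e_1$ (a direct computation gives $\dot m(0) = U(0,\alpha,-1) \neq 0$). Then a sustained $u = -Ue_1$ is applied: for $U > \gamma_3 - \gamma_1$, the modified energy $\mathcal{E}(m) = \tfrac12 m\cdot Dm + U m_1$ is a strict Lyapunov function of the forced LL dynamics on $\mathbb{S}^2$, whose only critical points are $\pm e_1$ (with $e_1$ unstable and $-e_1$ the unique global minimum), so LaSalle's principle forces $m(t) \to -e_1$. Finally, the small-time local controllability at $-e_1$ established in (c) below lands the trajectory exactly at $-e_1$ by a small-amplitude corrective control.

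For (b), I use a Lyapunov argument. Under the hypothesis $\gamma_1 < \gamma_2$ and by Appendix~\ref{append:e1AS}, the linearization at $e_1$ of the uncontrolled vector field $F$ has eigenvalues with strictly negative real parts; Lyapunov's theorem then yields a neighborhood $B$ of $e_1$, constants $c, K > 0$, and a smooth $V : B \to \R_+$ with $V(e_1) = 0$, $V(m) \asymp |m - e_1|^2$, $|\nabla V| \leq K$, and $\nabla V \cdot F \leq -cV$ on $B$. Writing the controlled dynamics as $\dot m = F(m) + G(m)u$ with $G(m)u = \alpha(u - (u\cdot m)m) - m \wedge u$, one checks $|G(m)u| \leq \sqrt{1+\alpha^2}\,|u|$, hence along any admissible trajectory
\[ \dot V \leq -cV + K\sqrt{1+\alpha^2}\,U, \]
giving $V(m(t)) \leq K\sqrt{1+\alpha^2}\,U/c$ so long as the trajectory stays in $B$. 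For $U$ sufficiently small, this sublevel set is strictly contained in $B$ and disjoint from $-e_1$, so $-e_1$ cannot be reached and $\inf\mathcal{U} > 0$.

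For (c), suppose by contradiction that some $(T^*, u^*) \in \mathcal{O}_{U_\textnormal{crit}}$ satisfies $m_{u^*}(T^*) = -e_1$. A direct computation gives $G(-e_1)u = (0,\alpha u_2 - u_3, \alpha u_3 + u_2)$, which is surjective onto the tangent plane at $-e_1$; combined with the exponential stability of $-e_1$ (whose linearization mirrors that at $e_1$), this yields the following quantitative small-time local controllability: there exist $\delta, \tau, C > 0$ such that every $p \in \mathbb{S}^2$ with $|p + e_1| \leq \delta$ can be steered to $-e_1$ in time $\tau$ by a control $\hat u$ with $\|\hat u\|_{L^\infty} \leq C|p + e_1|$. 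Setting $u_\lambda := \lambda u^*$ for $\lambda \in (0, 1)$, continuous dependence of the flow yields $m_{u_\lambda}(T^*) \to -e_1$ as $\lambda \to 1^-$; choosing $\lambda$ close enough to $1$ makes both $\lambda U_\textnormal{crit} < U_\textnormal{crit}$ and $C|m_{u_\lambda}(T^*) + e_1| < U_\textnormal{crit}$, so that the concatenation of $u_\lambda$ on $[0, T^*]$ with the corrector $\hat u$ on $[T^*, T^* + \tau]$ is admissible with $L^\infty$ norm strictly less than $U_\textnormal{crit}$, contradicting the definition of $U_\textnormal{crit}$. The main obstacle is this step~(c): proving the quantitative small-time local controllability estimate at $-e_1$ with corrective amplitude vanishing as $p \to -e_1$, and combining it with $L^\infty$-continuous dependence of trajectories in the precise form needed.
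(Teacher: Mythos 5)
Your proposal is correct in outline and reaches the same conclusion through the same skeleton (set $U_\textnormal{crit}=\inf\{U:\mathcal O_U\neq\emptyset\}$, use upward monotonicity, prove positivity of the infimum and that it is not attained), but the two key lemmas are proved by genuinely different means. For the lower bound $\inf\mathcal U>0$, the paper (Lemma~\ref{lem:U_stab}) works on the projected system for $\widetilde m=(m_2,m_3)$, writes Duhamel's formula, estimates $\|\exp(sA)\|_2$ case by case on the discriminant, and closes the estimate with a fixed-point lemma; this is heavier but yields the \emph{explicit} threshold $U_{\rm stab}$ recorded in Remark~\ref{rk:theoremUcrit}. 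Your converse-Lyapunov argument ($\dot V\leq -cV+K\sqrt{1+\alpha^2}\,U$ plus a first-exit-time barrier) is softer and shorter but non-quantitative; it is valid, provided you note that the trajectory cannot leave $B$ before the sublevel bound applies. For the non-attainment of the infimum, both arguments scale the control by $\lambda<1$ and use continuous dependence (the paper's Lemma~\ref{lem:cont_flow}); the difference is the local step at $-e_1$: the paper's Lemma~\ref{lem:reach_fin_time} builds an \emph{explicit} constant-modulus feedback driving $|\widetilde m|$ to zero at rate $U\sqrt{1+\alpha^2}/2$, and then removes the $U$-dependence of the neighborhood via asymptotic stability, whereas you invoke quantitative small-time local exact controllability at $-e_1$. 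That claim is in fact standard — $G(-e_1)$ is surjective onto $T_{-e_1}\mathbb S^2$, so the linearized system is controllable and the inverse-function-theorem argument gives correctors of amplitude $O(|p+e_1|)$ — so the step you flag as ``the main obstacle'' is not a genuine gap, but it should be either cited precisely (e.g., the linear test for local controllability) or replaced by the paper's explicit feedback construction. Finally, your three-phase LaSalle construction for nonemptiness of $\mathcal U$ at large $U$ is a welcome addition: the paper's proof of the theorem does not re-establish this point explicitly (it is supplied elsewhere, via the planar trajectory of Lemma~\ref{lem:2151} and the reference \cite{alouges2009magnetization}), and your energy computation with $\mathcal E(m)=\tfrac12 m\cdot Dm+Um_1$ is correct for $U>\gamma_3-\gamma_1$.
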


\begin{remark}
It is notable that the proof of this theorem provides an explicit lower-bound estimate of $U_\textnormal{crit}$. The precise bound is derived in Remark \ref{rk:theoremUcrit}
\end{remark}

We are now interested in the case where $\gamma_1=\gamma_2$, which is not covered by the above result. It is interesting to note that in this case, the behavior of the optimized physical system is very different from the one described in the Theorem~\ref{th:ex_U_crit}. Indeed, this situation of symmetry leads to the fact that there is no longer a threshold from which the system is controllable.

To complete this analysis, we also investigate in the following result the existence of optimal planar trajectories. In view of the system symmetry, it is natural to conjecture that the optimal trajectory are planar, since all the points in $\operatorname{span} (e_1, e_2) \cap \mathbb{S}^2$ are stable, and this set is even asymptotically stable. Somewhat surprisingly, We show here that this is actually not the case.

\begin{theorem}\label{theo:gamma1=gamma2}
If $\gamma_1=\gamma_2\leq \gamma_3$, then the optimal control problem~\eqref{OCP:minT} has a solution whatever the value of $U >0$, meaning that $U_{\textrm{crit}}=0$, with the notations of Theorem~\ref{th:ex_U_crit}. Furthermore, if $\gamma_1<\gamma_3$, the optimal trajectory in $\mathbb{S}^2$ is not contained in the plane $\operatorname{span}(e_1,e_2)$. 
\end{theorem}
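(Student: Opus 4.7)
The plan splits into two parts, matching the two claims of the theorem.

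\textbf{Existence for every $U>0$.} My approach is to construct an explicit admissible trajectory lying in the equator $\operatorname{span}(e_1,e_2)\cap\mathbb{S}^2$ and then invoke Theorem~\ref{th:ex_min}. The key simplification is that since $\gamma_1=\gamma_2$ the matrix $D$ acts as the scalar $\gamma_1$ on $\operatorname{span}(e_1,e_2)$, and~\eqref{LL:ODE} is invariant under the rotations $R_\phi$ around $e_3$. I would therefore try $m(t)=R_{\omega t}e_1$ with $\omega=U\sqrt{1+\alpha^2}$ together with the uniformly rotating control
\[
u(t)=\tfrac{U}{\sqrt{1+\alpha^2}}\bigl[e_3+\alpha\bigl(-\sin(\omega t)\,e_1+\cos(\omega t)\,e_2\bigr)\bigr],
\]
which has $|u(t)|\equiv U$. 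A direct check at $t=0$ (the damping term contributes $\tfrac{\alpha U}{\sqrt{1+\alpha^2}}(e_3+\alpha e_2)$ and the precession $-e_1\wedge h_0(0)=\tfrac{U}{\sqrt{1+\alpha^2}}(e_2-\alpha e_3)$, summing to $\omega e_2=\dot m(0)$) together with rotational equivariance shows that $m$ solves~\eqref{LL:ODE} and reaches $-e_1$ at $T=\pi/\omega$. Hence $\mathcal{O}_U$ is non-empty and Theorem~\ref{th:ex_min} provides an optimal pair, establishing $U_{\mathrm{crit}}=0$.

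\textbf{Non-planarity when $\gamma_1<\gamma_3$.} First I would prove the sharp lower bound $T_{\mathrm{pl}}:=\pi/(U\sqrt{1+\alpha^2})$ for \emph{planar} trajectories. Writing $m(t)=\cos\phi(t)e_1+\sin\phi(t)e_2$ and decomposing $u=u_r m+u_\tau\mathrm{tang}+u_n e_3$ along the radial, equatorial-tangential and vertical directions, a direct expansion of~\eqref{LL:ODE} gives $\dot m=(\alpha u_\tau+u_n)\mathrm{tang}+(\alpha u_n-u_\tau)e_3$. Remaining on the equator forces $u_\tau=\alpha u_n$, whence $\dot\phi=(1+\alpha^2)u_n$ and the bound $|u|\leq U$ yields $|\dot\phi|\leq U\sqrt{1+\alpha^2}$. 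So no trajectory in $\operatorname{span}(e_1,e_2)$ can beat $T_{\mathrm{pl}}$, which is precisely the time realised by the control above.

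Finally I would exhibit a \emph{non-planar} trajectory strictly faster than $T_{\mathrm{pl}}$. Two structural facts drive the construction: (i) the map $u\mapsto\alpha(u-(u\cdot m)m)-m\wedge u$ restricted to $T_m\mathbb{S}^2$ is a $\sqrt{1+\alpha^2}$-scaled rotation, so under $|u|\leq U$ the reachable velocity $\dot m$ fills a disk of radius $U\sqrt{1+\alpha^2}$ in $T_m\mathbb{S}^2$ centred at the drift $F(m,0)$; (ii) a direct computation of $F(m,0)$ in spherical coordinates shows its $\hat\phi$-component equals $-(\gamma_3-\gamma_1)\cos\theta\sin\theta$, which is \emph{strictly positive} on the southern hemisphere as soon as $\gamma_1<\gamma_3$. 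Testing the one-parameter family $\theta_\varepsilon(\phi)=\tfrac{\pi}{2}+\varepsilon\sin\phi$, $\phi\in[0,\pi]$, and maximising $\dot\phi$ within the admissible disk, I expect
\[
\dot\phi(\phi)=U\sqrt{1+\alpha^2}+(\gamma_3-\gamma_1)\varepsilon\sin\phi+O(\varepsilon^2),
\]
so that $T_\varepsilon=\int_0^\pi d\phi/\dot\phi(\phi)=T_{\mathrm{pl}}-\tfrac{2(\gamma_3-\gamma_1)\varepsilon}{(1+\alpha^2)U^2}+O(\varepsilon^2)<T_{\mathrm{pl}}$ for small $\varepsilon>0$. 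The main technical delicacy will be checking that this $(\dot\theta,\dot\phi)$ is realised by a single $u$ with $|u|\leq U$ \emph{uniformly} in $\phi$: since $\theta_\varepsilon$ forces $\dot\theta=\varepsilon\cos\phi\,\dot\phi=O(\varepsilon)$, one must reduce $\dot\phi$ below saturation by an $O(\varepsilon^2)$ amount depending smoothly on $\phi$. This correction is uniform on $[0,\pi]$ and does not affect the first-order gain, giving $\mathcal{T}_U<T_{\mathrm{pl}}$ and therefore that no optimal trajectory can lie in $\operatorname{span}(e_1,e_2)$.
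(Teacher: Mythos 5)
Your proof is correct, and while the first part is in the same spirit as the paper (the paper also tests the equatorial trajectory $m_\varepsilon(t)=(\cos\varepsilon t,\sin\varepsilon t,0)$, computing $\mathcal F_\varepsilon=\varepsilon^2/(1+\alpha^2)$ and invoking Lemma~\ref{lem:2057} and Theorem~\ref{th:ex_min}; your explicit rotating control with $\omega=U\sqrt{1+\alpha^2}$ is exactly this trajectory made concrete), your argument for non-planarity is genuinely different from the paper's. The paper proceeds via the Pontryagin conditions: assuming $m_3\equiv0$ and using that $D$ then acts as the scalar $\gamma_1$ on $\operatorname{span}(e_1,e_2)$, it reads off from \eqref{eq:LL_ODE_bis} that $\varphi_3\equiv0$, takes the third component of \eqref{eq:complete_eq_phi} to get $(\gamma_3-\gamma_1)(e_3\wedge m)\cdot\varphi=0$, and evaluates at $t=0$ to force $\varphi(0)=0$, contradicting \eqref{cond:phi:timeTtime0}. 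You instead argue variationally: you prove the sharp lower bound $T_{\mathrm{pl}}=\pi/(U\sqrt{1+\alpha^2})$ for trajectories confined to the equator (the tangency condition $u_\tau=\alpha u_n$ is the right observation), then exhibit a southward dip $\theta_\varepsilon=\pi/2+\varepsilon\sin\phi$ that exploits the positive $\hat\phi$-drift $-(\gamma_3-\gamma_1)\cos\theta\sin\theta$ off the equator to produce $T_\varepsilon=T_{\mathrm{pl}}-\frac{2(\gamma_3-\gamma_1)}{(1+\alpha^2)U^2}\varepsilon+O(\varepsilon^2)<T_{\mathrm{pl}}$, whence $\mathcal T_U<T_{\mathrm{pl}}$ and no planar trajectory can be optimal. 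I checked the drift computation, the description of the reachable velocity disk $\{|\dot m - F(m,0)|\le U\sqrt{1+\alpha^2}\}\cap T_m\mathbb{S}^2$, and the uniformity of the $O(\varepsilon^2)$ correction coming from the coupling $\dot\theta=\varepsilon\cos\phi\,\dot\phi$; all are sound. Your route is more elementary and self-contained (no adjoint state needed), gives the explicit sharp planar time, and quantifies the gain from leaving the plane, at the cost of a somewhat longer perturbation argument; the paper's PMP route is shorter once the optimality-conditions machinery is already in place.
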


It is interesting to notice that Theorem~\ref{th:ex_U_crit} can be refined in the particular case where $\gamma_1 < \gamma_2 = \gamma_3$. To this aim, we will deeply exploit the necessary first order optimality conditions provided by the so-called Pontryagin Maximum Principle (PMP). We refer to Section~\ref{optCond:PMP} for a precise statement of such conditions.

\begin{theorem}\label{theo:gamma2=gamma3}
    If $\gamma_1\leq \gamma_2 = \gamma_3$, then $U_\textnormal{crit} = \frac{\alpha}{2 \sqrt{1 + \alpha^2}} (\gamma_2 - \gamma_1)$. Furthermore, for all $U > U_\textnormal{crit}$, 
    $$
    \mathcal{T}_U = \frac{\pi}{\sqrt{1 + \alpha^2} \sqrt{U^2 - U_\textnormal{crit}^2}}.
    $$
\end{theorem}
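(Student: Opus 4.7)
My plan is to exploit the axial symmetry of the LL dynamics around $e_1$ (valid because $\gamma_2 = \gamma_3$) in order to reduce the optimization on $\mathbb{S}^2$ to a scalar comparison problem for the polar angle $\theta(t) := \arccos(m(t)\cdot e_1)$. Introducing spherical coordinates $(\theta,\phi)$ centered on $e_1$ and decomposing the control in the moving frame $(m,\hat\theta,\hat\phi)$ as $u = u_r m + u_\theta \hat\theta + u_\phi \hat\phi$, the radial component $u_r$ drops out of \eqref{LL:ODE} (so we may assume $u_r = 0$, $u_\theta^2 + u_\phi^2 \leq U^2$), and a direct projection using $-Dm = -\gamma_2 m + (\gamma_2 - \gamma_1) m_1 e_1$ yields
\begin{align*}
\dot\theta &= -\tfrac{\alpha(\gamma_2-\gamma_1)}{2}\sin(2\theta) + \alpha u_\theta + u_\phi, \\
\sin\theta\,\dot\phi &= \tfrac{\gamma_2-\gamma_1}{2}\sin(2\theta) + \alpha u_\phi - u_\theta.
\end{align*}

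Cauchy--Schwarz applied to $\alpha u_\theta + u_\phi$ yields the pointwise pinching $\dot\theta \leq F(\theta) := \sqrt{1+\alpha^2}\,U - \tfrac{\alpha(\gamma_2-\gamma_1)}{2}\sin(2\theta)$, whose minimum on $[0,\pi]$ is attained at $\theta = \pi/4$ and equals $\sqrt{1+\alpha^2}(U-U_{\textnormal{crit}})$ with $U_{\textnormal{crit}} = \alpha(\gamma_2-\gamma_1)/(2\sqrt{1+\alpha^2})$. If $U \leq U_{\textnormal{crit}}$ then $F \leq 0$ in a neighborhood of $\pi/4$, so comparison against the autonomous ODE $\dot{\tilde\theta} = F(\tilde\theta)$, $\tilde\theta(0)=0$, shows that $\theta(t) \leq \tilde\theta(t) < \pi/4$ for every finite $t$; hence $\mathcal{O}_U = \emptyset$ and Theorem \ref{th:ex_min} rules out any optimum. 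If instead $U > U_{\textnormal{crit}}$, then $F>0$ throughout $[0,\pi]$, and since $\frac{d}{dt}\int_0^{\theta(t)} d\sigma/F(\sigma) = \dot\theta/F(\theta) \leq 1$, every admissible trajectory satisfies
\[ \mathcal{T}_U \geq \int_0^{\pi}\frac{d\theta}{F(\theta)} = \frac{1}{2}\int_0^{2\pi}\frac{ds}{\sqrt{1+\alpha^2}\,U - \tfrac{\alpha(\gamma_2-\gamma_1)}{2}\sin s} = \frac{\pi}{\sqrt{1+\alpha^2}\,\sqrt{U^2-U_{\textnormal{crit}}^2}}, \]
the last equality using the classical identity $\int_0^{2\pi}(a-b\sin s)^{-1}ds = 2\pi/\sqrt{a^2-b^2}$ for $a>|b|$.

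For the matching upper bound I propose the feedback that saturates Cauchy--Schwarz, namely $u_r = 0$, $u_\theta = \alpha U/\sqrt{1+\alpha^2}$, $u_\phi = U/\sqrt{1+\alpha^2}$ in the moving frame. It produces $\dot\theta = F(\theta) > 0$ so that $\theta$ sweeps monotonically from $0$ to $\pi$ in exactly the claimed time, with the associated $\dot\phi = (\gamma_2-\gamma_1)\cos\theta$ describing an explicit (non-planar, when $\gamma_1 < \gamma_2$) spiral. Combined with Theorem \ref{th:ex_min}, this establishes both the existence of the optimal pair and the announced value of $\mathcal{T}_U$.

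The chief technical nuisance will be the coordinate singularity at the endpoints $\theta \in \{0,\pi\}$, where $\hat\theta,\hat\phi$ are ambiguous: I would fix an arbitrary orthonormal completion of $e_1$ at $t=0$ (harmless by axisymmetry of the target), execute the Cartesian form of the feedback locally, and patch with the $(\theta,\phi)$-system once $\theta > 0$. A secondary point deserving care is the borderline case $U = U_{\textnormal{crit}}$, where $F$ has a double zero at $\pi/4$: one must verify that $\int^{\pi/4} d\sigma/F(\sigma)$ diverges, so that $\theta$ genuinely cannot reach, let alone cross, $\pi/4$ in finite time.
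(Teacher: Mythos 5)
Your argument is correct and follows a genuinely different route from the paper's. The paper proceeds through the Pontryagin Maximum Principle: Lemma~\ref{lem:link_pmp_opt_traj_23} uses the rotational invariance $D R_\theta = R_\theta D$ to show that every PMP-extremal is optimal; Lemma~\ref{lem:adjoint_dir} exhibits the conserved quantity $\varphi\cdot(e_1\wedge m)\equiv 0$ for the reduced adjoint; and Lemma~\ref{lem:m1_theta_odes} deduces that the extremal's polar angle $\theta$ obeys the autonomous ODE $\dot\theta = -\alpha(\gamma_2-\gamma_1)\sin\theta\cos\theta + U\sqrt{1+\alpha^2}$, from which the threshold and the integral for $\mathcal{T}_U$ are read off. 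You derive the \emph{same} scalar ODE, but not as the characterization of extremals: Cauchy--Schwarz applied to the tangential control components $(u_\theta,u_\phi)$ gives the pointwise differential inequality $\dot\theta\leq F(\theta)$ for \emph{every} admissible trajectory, the nonexistence for $U\leq U_\textnormal{crit}$ and the lower bound $\mathcal{T}_U\geq \int_0^\pi d\sigma/F(\sigma)$ follow from elementary ODE comparison, and the matching upper bound comes from the feedback that saturates Cauchy--Schwarz. This bypasses the PMP machinery entirely, is self-contained (in particular it does not rely on Theorem~\ref{th:ex_U_crit}, so the spherical case $\gamma_1=\gamma_2=\gamma_3$ is handled uniformly rather than via a separate remark), and is arguably the simpler proof; what it gives up is the information about the adjoint state that the paper's lemmas establish en route. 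The two technical points you flag are real and correctly handled: the pole singularity is harmless because $\dot\theta(0)=\sqrt{1+\alpha^2}\,U>0$ and the feedback is smooth in Cartesian form, while at $U=U_\textnormal{crit}$ the factor $F$ has a double zero at $\pi/4$ so $\int^{\pi/4}d\sigma/F$ indeed diverges. One further minor point worth spelling out in the lower-bound step: $\theta=\arccos m_1$ is only H\"older-$\tfrac12$ (not Lipschitz) at the poles, but $G\circ\theta$ with $G(\theta)=\int_0^\theta d\sigma/F(\sigma)$ is still absolutely continuous since $(1-m_1^2)^{-1/2}$ is integrable, so the integration $G(\theta(T))-G(\theta(0))\leq T$ is legitimate.
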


\begin{remark}
    In particular, we infer from the result above the following asymptotics:
    \begin{equation*}
        \mathcal{T}_U \sim \frac{\pi}{\sqrt{2 U_\textnormal{crit} (1 + \alpha^2)}} \frac{1}{\sqrt{U - U_\textnormal{crit}}}\quad \text{as }U \searrow U_\textnormal{crit},
    \end{equation*}
   and
    \begin{equation*}
        \mathcal{T}_U \sim \frac{\pi}{U \sqrt{1 + \alpha^2}}\quad \text{as }U\to +\infty.
    \end{equation*}
\end{remark}
\begin{remark}[Case of where the shape of the sample is a sphere]
In the case where $\Omega$ is a sphere, then one has $\gamma_1=\gamma_2=\gamma_3$. Then, both conclusions of Theorem~\ref{theo:gamma1=gamma2} and \ref{theo:gamma2=gamma3} apply, meaning that $U_{\textrm{crit}}=0$ and the optimal time is given by
$\mathcal{T}_U  =\pi/(U\sqrt{1 + \alpha^2})$. Furthermore, it may be shown that, in that case, there exists optimal planar trajectories in each of the hyperplanes $\operatorname{span}(e_i,e_j)$ with $i\neq j$. We refer for instance to \cite[Proof of Prop.~2]{alouges2009magnetization}, whose main argument can be reproduced in our case.
\end{remark}

We end this section by a result on the asymptotic behavior of optimal magnetization trajectories as $U$ diverges to $+\infty$. We prove that optimal trajectories tend to be supported on a geodesic on the sphere whenever $U$ is large.

\begin{theorem} \label{th:alm_planar}
    Let $\gamma_1\leq \gamma_2\leq \gamma_3$ and $U > U_\textnormal{crit}$ and $m$ be an optimal trajectory. Let $p$ be its adjoint state. Then, if $U$ is large enough, $m$ stays close to the plane $V = \operatorname{span} (e_1, p(\mathcal{T}_U))$ in the following sense: there exists $U_0 > 0$ and $C > 0$ such that for every $U > U_0$ and $t \in [0, \mathcal{T}_U]$,
    \begin{equation*}
        \norm{m (t) - \mathbb{P}_V m (t)} \leq \frac{C}{U},
    \end{equation*}
    where $\mathbb{P}_V$ denotes the orthogonal projection onto $V$.
\end{theorem}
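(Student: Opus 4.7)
My plan is to rescale time via $\tau = Ut$ and carry out a perturbation analysis around the singular limit $U\to\infty$, in which the demagnetizing field becomes negligible compared to the control.

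I would first set $\tilde m(\tau) := m(\tau/U)$ and $\tilde u(\tau) := u(\tau/U)/U$, so that $|\tilde u|\le 1$. Equation~\eqref{LL:ODE} then becomes
\[
\tilde m'(\tau) = \alpha\bigl(\tilde u - (\tilde u\cdot \tilde m)\tilde m\bigr) - \tilde m\wedge \tilde u + \frac{1}{U} F_D(\tilde m),
\]
where $F_D(m) := -\alpha(Dm - (Dm\cdot m)m) + m\wedge Dm$ is uniformly bounded on $\mathbb{S}^2$. Setting $\tilde{\mathcal T}_U := U\mathcal T_U$, the elementary identity $|\tilde m'|^2 = (1+\alpha^2)\bigl|(\tilde u - D\tilde m/U) - ((\tilde u - D\tilde m/U)\cdot\tilde m)\tilde m\bigr|^2$ combined with $\|D\|\le 1$ yields $|\tilde m'|\le \sqrt{1+\alpha^2}(1+1/U)$. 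Since the arc length of $\tilde m$ on $\mathbb{S}^2$ is at least the geodesic distance $\pi$ between $e_1$ and $-e_1$, I get $\tilde{\mathcal T}_U \ge \pi/\sqrt{1+\alpha^2} - O(1/U)$; a matching upper bound comes from an explicit planar trial control, which produces $\tilde{\mathcal T}_U = \pi/\sqrt{1+\alpha^2} + O(1/U)$.

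Next, I would apply the Pontryagin Maximum Principle of Section~\ref{optCond:PMP} to get the explicit form $\tilde u = w/|w|$ with $w := \alpha(\tilde p - (\tilde p\cdot \tilde m)\tilde m) + \tilde m\wedge \tilde p$, together with the adjoint equations coupling $(\tilde m,\tilde p)$ into a Hamiltonian flow whose vector field depends smoothly on the parameter $1/U$. Passing formally to the limit $U\to+\infty$ removes the $F_D/U$ perturbation and recovers the extremal system of the spherical ($D=0$) problem, whose optimal trajectories are the great circles through $\pm e_1$ traversed at constant speed $\sqrt{1+\alpha^2}$ (in accordance with the remark following Theorem~\ref{theo:gamma2=gamma3}). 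A direct calculation along such a limit extremal shows that the corresponding adjoint $\tilde p^\infty$ lies in the plane of motion $V^\infty$ and is not parallel to $e_1$ (otherwise $w\equiv 0$, contradicting $|\tilde u|=1$), so that indeed $V^\infty = \operatorname{span}(e_1,\, \tilde p^\infty(\pi/\sqrt{1+\alpha^2}))$.

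The last step will be a quantitative stability argument. Fixing the limit extremal $(\tilde m^\infty,\tilde p^\infty)$ as a reference, Gronwall's inequality applied to the Hamiltonian flow (Lipschitz in $(\tilde m,\tilde p)$ in a neighbourhood of this reference, smooth in $1/U$) gives
\[
\sup_{\tau\in[0,\tilde{\mathcal T}_U]}\bigl(|\tilde m(\tau) - \tilde m^\infty(\tau)| + |\tilde p(\tau) - \tilde p^\infty(\tau)|\bigr) = O(1/U).
\]
In particular, $\tilde p(\tilde{\mathcal T}_U) = \tilde p^\infty(\pi/\sqrt{1+\alpha^2}) + O(1/U)$, so $V = \operatorname{span}(e_1,\tilde p(\tilde{\mathcal T}_U))$ satisfies $\|\mathbb{P}_V - \mathbb{P}_{V^\infty}\| = O(1/U)$ in operator norm. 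Since $\tilde m^\infty(\tau)\in V^\infty$, the triangle inequality
\[
|\tilde m(\tau) - \mathbb{P}_V \tilde m(\tau)| \le |\tilde m(\tau) - \tilde m^\infty(\tau)| + \|\mathbb{P}_V - \mathbb{P}_{V^\infty}\|\cdot|\tilde m^\infty(\tau)| = O(1/U)
\]
produces the claimed bound upon unscaling time.

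The hard part will be identifying the \emph{correct} limit extremal against which to compare: the limit problem admits an entire $S^1$-family of optimal great circles (rotation about the $e_1$ axis), so stability cannot be obtained from uniqueness in the limit. The workaround is that the perturbed problem itself selects, through its own adjoint $\tilde p(\tilde{\mathcal T}_U)$, a definite reference extremal; an implicit function theorem argument applied to the PMP flow at a fixed limit extremal should show that this selection depends smoothly on $1/U$ with the claimed rate, which is precisely what upgrades the Gronwall bound from a merely subsequential convergence into a genuine $O(1/U)$ estimate.
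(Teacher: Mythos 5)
Your overall strategy (rescale time by $U$, treat the demagnetizing term as an $O(1/U)$ regular perturbation of the $D=0$ extremal flow, and close with Gronwall over the bounded rescaled horizon $U\mathcal{T}_U=\pi/\sqrt{1+\alpha^2}+O(1/U)$) is viable and genuinely different from the paper's. However, the final step as you propose it contains a real gap, and it is exactly the one you flag: the limit problem has an $S^1$-family of optimal great circles, so the linearized shooting map of the limit extremal flow has a nontrivial kernel (the infinitesimal rotation about $e_1$), and an implicit function theorem ``at a fixed limit extremal'' does not apply. Trying to prove that the perturbed problem selects a limit circle depending smoothly on $1/U$ is both hard and unnecessary. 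The correct way to close your argument is to give up on a \emph{fixed} reference: for each $U$, compare the perturbed extremal system \eqref{eq:ode_m_with_psi} for $(\tilde m,\psi)$ with $\psi=\varphi/\abs{\varphi}$ against the $D=0$ flow started from the \emph{same} initial data $(e_1,\psi_U(0))$. Since $\psi_U(0)\perp e_1$, that limit solution is an explicit great circle in the ($U$-dependent) plane $W_U=\operatorname{span}(e_1,\psi_U(0))$, Gronwall on $[0,U\mathcal{T}_U]$ gives $O(1/U)$ closeness of both $\tilde m$ and $\psi$ to it, and since $p(\mathcal{T}_U)=\varphi(\mathcal{T}_U)\perp e_1$ and $\psi(U\mathcal{T}_U)=-\psi_U(0)+O(1/U)$, the planes $V$ and $W_U$ are $O(1/U)$-close in the sense of their orthogonal projections. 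No selection, no IFT, and the degeneracy of the limit problem is irrelevant because you are comparing flows, not minimizers. You would also need to justify uniformly that $\psi$ is well defined, i.e.\ that $\abs{\varphi}$ stays bounded below by $c/U$, which follows from \eqref{eq:expr_max_ham}.

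For contrast, the paper's proof is much shorter and avoids any comparison with a limit system. It observes (Lemma~\ref{lem:zeta_ode}) that $Z=(p\wedge m)/\abs{p\wedge m}$ satisfies an ODE whose right-hand side is bounded \emph{independently of} $U$; combined with the planar-trajectory bound $\mathcal{T}_U\leq C/(U-U_{\textnormal{plan}})$ from Lemma~\ref{lem:2151}, this shows $Z$ moves by at most $O(1/U)$ over the whole trajectory, hence $\dot m = U\sqrt{1+\alpha^2}\, m\wedge Z(\mathcal{T}_U)+O(1)$, and integrating $\dot m\cdot Z(\mathcal{T}_U)=O(1)$ over a time $O(1/U)$ from the endpoint condition $m(\mathcal{T}_U)\cdot Z(\mathcal{T}_U)=0$ gives the result, since $V^\perp=\operatorname{span}(Z(\mathcal{T}_U))$. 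What the paper's argument buys is the absence of any singular-perturbation or normalization bookkeeping; what yours would buy, once repaired as above, is a precise asymptotic description of the whole extremal (trajectory \emph{and} adjoint) as a perturbed great circle, which is more information than the theorem asks for.
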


\section{Minimization and optimality}\label{sec:minOptPrelim}

\subsection{Proof of Theorem~\ref{th:ex_min}: existence of an optimal trajectory}

    Let us assume that $\mathcal{O}_{U }$ is nonempty. This allows us to consider a minimizing sequence $(T_n,u_n)_{n\in\N}\in \mathcal{O}_{U }^\N$, and $m_n \in \mathscr{C}([0,T_n]$ the solution to \eqref{LL:ODE} with field $u_n$.
    By definition, $T_n \rightarrow \mathcal{T}_U$ as $n \rightarrow \infty$.
    In what follows, we will denote similarly a sequence and any subsequence with a slight abuse of notation, for the sake of simplicity. 
    
    Let us introduce the functions $\widetilde{u}_n$, $\widetilde{m}_{u_n}$ defined on $[0,1]$ by
    $$
    \widetilde{u}_n(s)=u_n(T_ns)\quad \text{and}\quad \widetilde{m}_{n}(s)=m_{n}(T_ns).
    $$
    Hence, System~\eqref{LL:ODE} rewrites
    \begin{equation}\label{LL:ODEbis}
    \left\{\begin{array}{ll}
     \dot{\widetilde{m}}_n = T_n \left(\alpha \left(\widetilde{h}_0(\widetilde{m}_n)-(\widetilde{h}_0(\widetilde{m}_n)\cdot \widetilde{m}_n) \widetilde{m}_n\right)-\widetilde{m}_n\wedge \widetilde{h}_0(\widetilde{m}_n)\right)& \text{in }(0,1) \\
     \widetilde{m}_n(0) =e_1 &
    \end{array}\right.
    \end{equation}
    where $\widetilde{h}_0(\widetilde{m}_n)=-D\widetilde{m}_{n}+\widetilde{u}_n$.
    
    Similarly, since the sequence $(\widetilde u_n)_{n\in\N}$ is bounded in $L^\infty(0,1)$, it converges weakly-star in $L^\infty(0,1)$ up to a subsequence to some element $u^*$ such that $|u^*(\cdot)|\leq U $ a.e. in $[0,1]$ according to the Banach-Alaoglu-Bourbaki theorem. Since both $(\widetilde m_{n})_{n\in \N}$ and $(\widetilde u_n)_{n\in \N}$ are bounded in $L^\infty([0,1])$, we infer that so is $\dot{\widetilde m}_{n}$ according to \eqref{LL:ODE}. Therefore, the sequence $(\widetilde m_{n})_{n\in \N}$ is bounded in $W^{1,\infty}(0,1)$ and hence converges (up to a subsequence) towards an element $\widetilde{m}^*\in W^{1,\infty}(0,1)$ in $C^0([0,1])$ according to the Ascoli theorem. In particular, one has necessarily $|\widetilde{m}^*(\cdot)|=1$. Now, let us rewrite \eqref{LL:ODEbis} as the fixed-point equation
    $$
    \forall s\in [0,1], \quad \widetilde m_{n}(s)=e_1+T_n\int_0^s\left( \alpha \left(\widetilde h_0(\widetilde m_{n})-(\widetilde h_0(\widetilde m_{n})\cdot \widetilde m_{n}) \widetilde m_{n}\right)-\widetilde m_{n}\wedge \widetilde h_0(\widetilde m_{n})\right)\, d\sigma
    $$
    Observe that the right-hand side is linear with respect to $\widetilde h_0(m_{n})$ and that, according to the properties above, $(\widetilde h_0(\widetilde m_{n}))_{n\in \N}$ converges weakly-star to $\widetilde h_0(\widetilde m^*)$ in $L^\infty(0,1)$, where $\widetilde{h}_0(\widetilde{m}^*)=-D\widetilde{m}^*+\widetilde{u}^*$. Letting $n$ tend to $+\infty$ in the equation above, we obtain:
    \[ 
    \forall s\in [0,1], \quad \widetilde m^*(s)=e_1+\mathcal{T}_U\int_0^s\left( \alpha \left(\widetilde h_0(\widetilde m^*)-(\widetilde h_0(\widetilde m^*)\cdot \widetilde m^*) \widetilde m^*\right)-\widetilde m^*\wedge \widetilde h_0(\widetilde m^*)\right)\, d\sigma.
    \]
    Moreover, since $\tilde{m}_n (1) = - e_1$ by construction, the convergence in $\mathcal{C}^0 ([0, 1])$ leads to $\tilde{m}^* (1) = - e_1$. Taking the previous formula with $s=1$, we get
    \begin{equation*}
        - 2 e_1 = \mathcal{T}_U\int_0^1\left( \alpha \left(\widetilde h_0(\widetilde m^*)-(\widetilde h_0(\widetilde m^*)\cdot \widetilde m^*) \widetilde m^*\right)-\widetilde m^*\wedge \widetilde h_0(\widetilde m^*)\right)\, d\sigma.
    \end{equation*}
    This proves that $\mathcal{T}_U > 0$.
    Now, let us introduce $u^*$ as $u^*(t)=\widetilde{u}^*(t/\mathcal{T}_U)$. By undoing the change of variable above, we get that $\widetilde{m}^*(t/\mathcal{T}_U)=m_{u^*}(t)$ for a.e. $t\in [0,\mathcal{T}_U]$. 
    Furthermore, $m_{u^*}(0)=e_1$ and $m_{u^*}(\mathcal{T}_U)=-e_1$ since $\widetilde{m}_n(0)=e_1$ and $\widetilde{m}_n(1)=-e_1$. The converse sense is straightforward and the expected conclusion follows. 

Finally, observe that the same reasoning can be reproduced whenever $\mathcal{T}_U$ is finite.

\subsection{Sufficient and necessary condition for the existence of an admissible trajectory}

Let $(m,u)$ be a solution to \eqref{LL:ODE} on $[0,T]$, and for $t \in [0,T]$, consider the mobile frame $\mathcal{B}(t)=(m(t),\dot e(t),m(t)\wedge \dot e(t))$ where $\dot{e}=\dot{m}/|\dot{m}|$\footnote{Here, $\dot e$ is merely a notation, and not the time derivative of a previously defined vector $e$.}. According to \cite{alouges2009magnetization}, by observing that 
\[
m \perp \dot m, \quad \dot m \perp m \wedge \dot m \quad \text{and}\quad m \perp m \wedge (m\wedge Dm),
\]
one shows easily, by decomposing $u(t)$ into $\mathcal{B}(t)$ and writing the equation for $u - (u \cdot m) m$, the projection of $u$ on $m^\perp$, that there exists $\lambda\in L^\infty (0,T)$ such that 
\begin{equation} \label{eq:u_m}
u = \frac{1}{1+\alpha^2}(\alpha \dot m + m \wedge \dot m)+Dm-(Dm\cdot m)m+\lambda m.
\end{equation}
In fact, $\lambda = u\cdot m$. Reciprocally, given any function $m \in W^{1,\infty}([0,T],\mathbb S^2)$, and any function $\lambda \in L^\infty([0,T],\mathbb R)$, if we define $u$ by \eqref{eq:u_m}, then $(m,u)$ is solution to \eqref{LL:ODE}. These considerations can be seen as a consequence of a flatness property of the main system. 

Again assuming that $(m,u)$ is admissible trajectory, i.e  a solution to \eqref{LL:ODE}, We infer from \eqref{eq:u_m} that $u(t)$ expands as
\begin{equation}\label{eq:uexpand}
u=\lambda m +\left(\frac{\alpha}{1+\alpha^2}|\dot m|+\dot e\cdot Dm \right)\dot e+\left(Dm \cdot (m \wedge \dot e)+\frac{1}{1+\alpha^2}|\dot m|\right)m \wedge \dot e.
\end{equation}
As, a consequence, using that $Dm \cdot (m \wedge \dot e)=\dot e\cdot (Dm \wedge m )$ due  the triple product property, we get
\begin{align*}
|u|^2 &= \lambda^2+\left(\frac{\alpha}{1+\alpha^2}|\dot m|+\dot e\cdot Dm\right)^2+\left(Dm\cdot (m\wedge \dot e)+\frac{|\dot m|}{1+\alpha^2}\right)^2\\
&= \lambda^2+\left(\dot e\cdot Dm+\frac{\alpha|\dot m|}{1+\alpha^2}\right)^2+\left(\dot e\cdot (Dm_u\wedge m)+\frac{|\dot m|}{1+\alpha^2}\right)^2.
\end{align*}
Clearly, this computation and the previous remarks show that, without loss of generality, we can furthermore assume that an \emph{optimal} trajectory satisfies $\lambda=0$, or equivalently, $u \cdot m =0$: we will do this in the following.

Let us introduce, for a given $T>0$, 
\[
\mathscr{V}_T=\{m\in H^1([0,T];\mathbb{S}^2)\mid m(0)=e_1\text{ and }m(T)=-e_1\}.
\]

To investigate the existence of an admissible trajectory, it is then convenient to introduce
\begin{align}\label{ob:aux}
\Lambda(T)&:=\inf_{\lambda\in L^\infty([0,T])}\inf_{m\in \mathscr{V}_T}\sup_{t\in [0,T]}\left(\lambda^2+\left(\dot e\cdot (Dm\wedge m)+\frac{|\dot m|}{1+\alpha^2}\right)^2+\left(\dot e\cdot Dm+\frac{\alpha|\dot m|}{1+\alpha^2}\right)^2\right),\nonumber \\
&= \inf_{m\in \mathscr{V}_T}\sup_{t\in [0,T]}\left(\left(\dot e\cdot (Dm\wedge m)+\frac{|\dot m|}{1+\alpha^2}\right)^2+\left(\dot e\cdot Dm+\frac{\alpha|\dot m|}{1+\alpha^2}\right)^2\right).
\end{align}

We summarize the above discussion in the form of a lemma.

\begin{lemma}\label{lem:2057}
The existence of an admissible trajectory for Problem~\eqref{OCP:minT} comes to the existence of $m\in \mathscr{V}_T$ such that the function $u$ given by \eqref{eq:uexpand} with $\lambda =0$ satisfies $\Vert u\Vert_{L^\infty([0,T];\mathbb{R}^3)}\leq U $, which is also equivalent to $\Lambda(T)\leq U ^2$. Also, $u$ satisfies $u \cdot m=0$.
\end{lemma}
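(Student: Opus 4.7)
The plan is to prove the lemma by showing that admissibility for Problem~\eqref{OCP:minT} can be rephrased entirely in terms of the trajectory $m$, through the flatness identity \eqref{eq:u_m}, after killing the harmless component of the control parallel to $m$.

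First I would reduce to $u\cdot m \equiv 0$. Given any admissible pair $(m, u)$ for \eqref{OCP:minT}, set $\tilde u := u - (u\cdot m)\,m$, the orthogonal projection of $u$ onto $m^\perp$. Replacing $u$ by $\tilde u$ alters $h_0(m) = -Dm + u$ by the vector $-(u\cdot m)\,m$, which is collinear to $m$. This correction contributes nothing to $m\wedge h_0(m)$ (since $m\wedge m = 0$), and it also cancels out of $h_0(m) - (h_0(m)\cdot m)m$ (it is itself along $m$). Hence $(m,\tilde u)$ still solves \eqref{LL:ODE}, and by orthogonality $|\tilde u|^2 = |u|^2 - (u\cdot m)^2 \leq |u|^2 \leq U^2$, so $(m,\tilde u)$ remains admissible. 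This reduction amounts to imposing $\lambda = u\cdot m = 0$ in the flatness identity \eqref{eq:u_m}.

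Next I would identify the reformulated admissibility with $\Lambda(T)\leq U^2$. Conversely, \eqref{eq:u_m} shows that for every sufficiently regular $m\in\mathscr{V}_T$ and every $\lambda\in L^\infty([0,T])$, the control $u$ so defined produces $m$ as a solution of \eqref{LL:ODE}. Combined with the previous reduction, the existence of an admissible trajectory is equivalent to the existence of $m\in\mathscr{V}_T$ such that the control $u$ given by \eqref{eq:uexpand} with $\lambda = 0$ satisfies $\|u\|_{L^\infty([0,T];\mathbb{R}^3)} \leq U$. The expansion of $|u|^2$ already carried out just before the lemma reads
\[
|u|^2 = \lambda^2 + \left(\dot e \cdot Dm + \tfrac{\alpha|\dot m|}{1+\alpha^2}\right)^2 + \left(\dot e \cdot (Dm\wedge m) + \tfrac{|\dot m|}{1+\alpha^2}\right)^2,
\]
and the infimum over $\lambda$ is trivially reached at $\lambda = 0$ (since $\lambda$ appears only through the nonnegative term $\lambda^2$). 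Therefore ``$\sup_t |u(t)|^2 \leq U^2$ for some $m\in\mathscr{V}_T$'' is exactly the condition $\Lambda(T)\leq U^2$, and the identity $u\cdot m = 0$ is built in by construction.

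The only mild technical subtlety is the passage from the infimum inequality $\Lambda(T)\leq U^2$ to the actual existence of a realizing $m$ in the borderline case of equality. This will be handled either by observing that strict inequality immediately yields a realizing $m$, or by a compactness argument in the spirit of the proof of Theorem~\ref{th:ex_min}, which ensures that minimizing sequences converge to a legitimate admissible pair. Apart from this, the entire proof is a direct unfolding of the flatness formula \eqref{eq:u_m} and of the norm computation already displayed above.
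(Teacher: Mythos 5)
Your proposal is correct and follows essentially the same route as the paper, whose ``proof'' is precisely the discussion preceding the lemma: the flatness identity \eqref{eq:u_m}, the observation that the component $\lambda m = (u\cdot m)m$ of the control does not affect the dynamics and only increases $|u|^2$ by $\lambda^2$, and the resulting identification of admissibility with $\Lambda(T)\leq U^2$. Your explicit projection argument for the reduction to $u\cdot m=0$ and your remark on the borderline case $\Lambda(T)=U^2$ are if anything slightly more careful than the paper's treatment, but they do not constitute a different method.
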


\subsection{Necessary optimality conditions for Problem~\texorpdfstring{\eqref{OCP:minT}}{P0}}\label{optCond:PMP}
This problem can be solved by using the Pontryagin maximum principle. 
The main results of this section are gathered in Proposition~\ref{prop:1stOrderCond}, at the end of this section.

The Hamiltonian associated to Problem~\eqref{OCP:minT} is
$$
\mathcal{H}:\begin{array}[t]{rcl}
\mathbb{S}^2 \times \R^3\times \{-1,0\}\times \R^3 & \to & \R\\
 (m,p,p^0,u) &\mapsto & p\cdot \left(- \alpha m \wedge (m \wedge h_0(m)) -m\wedge h_0(m)\right).
\end{array}
$$
It can be noted that the dependence of $\mathcal{H}$ on the control function $u$ is affine and one has
$$
\mathcal{H} (m,p,p^0,u)=p\cdot \left(- \alpha m \wedge (m \wedge u) - m \wedge u \right)-p\cdot  \left(- \alpha m \wedge (m \wedge Dm)-m\wedge Dm\right)
$$

As a first remark, the magnetization stays in $\mathbb{S}^2 = \partial \mathbb{B}$, where $\mathbb{B}$ is the closed unit ball of $\mathbb{R}^3$. Therefore, our problem is obviously equivalent to the problem with restricted conditions
\begin{equation*}
\mathcal{T}_{U} = \inf_{\substack{(T,u)\in \mathcal O_{U} \\ \abs{m}^2 - 1 = 0}}T.
\end{equation*}
Thus, we use the version of the Pontryagin maximum principle with restricted phase coordinates, as stated in \cite[Theorem~22]{MR0186436}. This theorem is stated for a minimization of an integral with fixed $T$, but it can be easily adapted to the case of a minimal time with classical changes (see for instance the passage from Theorem 1 to Theorem 2 in the same reference). With such a statement, we point out that, for all $m \in \mathbb{R}^3$
\begin{equation*}
    \nabla ( \abs{m}^2 - 1 ) = 2 m,
\end{equation*}
and that
\begin{equation*}
    2 m \cdot \left(- \alpha m \wedge (m \wedge h_0(m)) -m\wedge h_0(m)\right) = 0.
\end{equation*}
Therefore, in our case, this statement gives the exact same necessary conditions, with an additional orthogonality condition for the adjoint state, stated hereafter.

The first order optimality conditions read as follows: let us denote by $(T,u)$, an optimal pair for this problem; there exists an absolutely continuous mapping $p:[0,T]\to \R^3$ called {\it adjoint state} and a real number $p^0\in \{0,-1\}$ such that the pair $(p,p^0)$ is non-trivial and for almost every $t\in [0,T]$, the following conditions hold.

\begin{itemize}
\item {\it Adjoint equations.} 
Setting
\begin{eqnarray*}
F_1(m,p)&\coloneqq&\alpha \Bigl(p \wedge (m \wedge Dm) + Dm \wedge (m \wedge p) - D (m \wedge (m \wedge p)) \Bigr)-Dm\wedge p-D(p\wedge m),\\
F_2(m,p,u) &\coloneqq & - \alpha(p \wedge (m \wedge u) + u \wedge (m \wedge p))+u\wedge p,
\end{eqnarray*}
one gets
\begin{equation} \label{eq:adj_eq}
    \dot p=-\frac{\partial \mathcal{H}}{\partial m}=F_1(m,p)+F_2(m,p,u).
\end{equation}
Remark that, since $\abs{m} = 1$, one equivalently has
\begin{eqnarray*}
F_1(m,p)&=&\alpha (Dp-(Dm\cdot m)p-2(m\cdot p)Dm - 2 (Dm \cdot p) m)-Dm\wedge p-D(p\wedge m),\\
F_2(m,p,u) &= & \alpha(p\cdot m)u+\alpha (u\cdot m)p - 2 \alpha (p \cdot u) m+u\wedge p 
\end{eqnarray*}
\item {\it Maximality conditions.} For a.e. $t\in [0,T]$, $u(t)$ solves the optimization problem
\begin{equation}\label{eqMaxInst1}
\max_{|v|\leq U } \mathcal{H} (m(t),p(t), p^0, v)
\end{equation}
and one has at the final time $T$
\begin{equation}\label{eqMaxInst2}
\max_{|v_T|\leq U }\mathcal{H} (m(T),p(T),p^0,v_T)=-p^0.
\end{equation}
\item {\it A useful identity.} Since the dynamics only depends on the magnetization $m(\cdot)$ and the control $u(\cdot)$, the Hamiltonian functional is constant in time:
\begin{equation}\label{HamiltonCst}
\mathcal{H} (m(t),p(t),p^0,u(t))=-p^0, \quad t\in [0,T],
\end{equation}
according to \eqref{eqMaxInst2}, by evaluating the expression for $t=T$.
\item {\it An orthogonality condition for the adjoint state.} At the final time $t=T$, the adjoint state $p (T)$ is tangent to the boundary $\abs{m}^2 - 1 = 0$ at $m (T) = - e_1$. This condition is thus equivalent to 
\begin{equation}\label{eq:orth:pT}
p (T) \cdot e_1 = 0.
\end{equation}
\item {\it Orthogonality between $u$ and $m$.}
As seen in Lemma \ref{lem:2057}, $u \cdot m=0$ on $[0,T]$.
\end{itemize}

 
\begin{remark}
Since the initial and final state are fixed, there is no need to impose any transversality condition on the adjoint state.
\end{remark}

Let us analyze the conditions \eqref{eqMaxInst1} and \eqref{eqMaxInst2}.

\paragraph{The adjoint state $p$ cannot vanish on $[0,T]$.} Indeed, in the converse case, if there exists $t_0\in [0,T]$ such that $p(t_0)=0$, it follows from the Cauchy-Lipschitz theorem that $p(\cdot)=0$ and by using Condition~\eqref{eqMaxInst2}, one gets $p^0=0$, a contradiction with the non-triviality of the pair $(p,p^0)$.

\paragraph{On condition~\eqref{eqMaxInst1}.}
Observe that $v\mapsto \mathcal{H} (m(t),p(t),p^0,v)$ is affine with respect to $v$. According to the Karush-Kuhn-Tucker theorem, there exists $\mu\geq 0$ such that $\nabla_v \mathcal{H} (m(t),p(t),p^0,u (t))-\mu u (t)=0$ and the slackness condition $\mu(|u (t)|^2-U ^2)=0$ is satisfied. 

\medskip

If the set $\mathcal I := \{|u|<U \}$ is of positive Lebesgue measure, then one has \[ \alpha (p(t)-(p(t)\cdot m(t))m(t))=p(t)\wedge m(t) \quad \text{a.e. } t \in \mathcal I. \]
Taking the scalar product of this identity with $p(t)$ leads to $|p(t)|^2=(p(t)\cdot m(t))^2$ on $\mathcal I$. Since $p$ does not vanish, it follows from the equality case in the Cauchy-Schwarz inequality that $p(t)$ is proportional to $m(t)$. We will show that such a case cannot occur.

Let us introduce the function $\varphi \coloneqq p-(p\cdot m)m$. One can compute that $\varphi$ satisfies the differential (linear) relation
\begin{multline}\label{eq1155}
\dot{\varphi}=\alpha D\varphi-\alpha (Dm\cdot m)\varphi -Dm\wedge \varphi-D(\varphi\wedge m) + u\wedge \varphi 
 +(m\cdot (Dm\wedge \varphi))m-\alpha (u\cdot \varphi)m
\end{multline}
a.e. in $[0,T]$. This follows from an easy but lengthy computation, and from the fact that the function $\lambda$ given by $\lambda=p\cdot m$ satisfies
\[
\dot{\lambda}= -2\alpha (Dm\cdot m)\lambda-(Dm\wedge p)\cdot m-\alpha (u\cdot p) + 2 \alpha (Dm \cdot p),
\] 
a.e. in $[0,T]$. We leave the details to the reader.


Now, let us assume that the set $\mathcal{I}:=\{|u(\cdot)|<U \}$ is of positive Lebesgue measure. According to the discussion above, there exists a bounded function $\lambda$ such that $p=\lambda m$ on $\mathcal{I}$, and therefore, $\varphi$ vanishes on $\mathcal{I}$.
Due to \eqref{eq1155} being linear in $\varphi$, we obtain that $\varphi (\cdot)= 0$ on $[0, T]$, which means that 
\[ p(T) = (p(T) \cdot m(T)) m(T) = (p(T) \cdot e_1) e_1 = 0, \]
from the orthogonality condition \eqref{eq:orth:pT}. But recall that  $p$ cannot vanish on $[0, T]$: we reached a contradiction.

We conclude that $|u|=U $ a.e. on $[0,T]$. It follows that 
\[ \alpha (p-(p\cdot m)m)=p\wedge m+\mu u \quad \text{on  } [0,T], \]
and using furthermore that 
\begin{align*}
|\alpha (p-(p\cdot m)m)-p\wedge m|^2&= \alpha^2(|p|^2-(p\cdot m)^2)+|p\wedge m|^2\\
&= (\alpha^2+1)(|p|^2-(p\cdot m)^2),
\end{align*} 
one gets an expression of $u$ in terms of $p$ or $\varphi$:
\begin{equation}\label{expr:uopt18091}
u = \frac{U }{(\alpha^2+1)^{1/2}} \frac{\alpha (p-(p\cdot m)m)-p\wedge m}{\sqrt{|p|^2-(p\cdot m)^2}} = \frac{U }{(\alpha^2+1)^{1/2}} \frac{\alpha \varphi - \varphi\wedge m}{\abs{\varphi}}
\end{equation}


In particular, we get
\begin{equation*}
    m \wedge u = \frac{U }{(\alpha^2+1)^{1/2}} \frac{\alpha m \wedge \varphi - \varphi}{\abs{\varphi}},
\end{equation*}
\begin{equation*}
    \alpha m \wedge (m \wedge u) = \frac{U }{(\alpha^2+1)^{1/2}} \frac{- \alpha^2 \varphi - \alpha m \wedge \varphi}{\abs{\varphi}}.
\end{equation*}
Substituting those terms in \eqref{LL:ODE} and \eqref{eq1155}, we get at last
\begin{equation} \label{eq:LL_ODE_bis}
    \dot{m} = m \wedge Dm + m \wedge (m \wedge Dm) + U  (\alpha^2+1)^{1/2} \frac{\varphi}{\abs{\varphi}},
\end{equation}
\begin{equation}\label{eq:complete_eq_phi}
\dot{\varphi}=\alpha D\varphi-\alpha (Dm\cdot m)\varphi -Dm\wedge \varphi-D(\varphi\wedge m)
 +(m\cdot (Dm\wedge \varphi))m- U  (\alpha^2+1)^{1/2} \abs{\varphi} m.
\end{equation}

\paragraph{On condition~\eqref{eqMaxInst2}.}
By setting $p(T)=(0,p_{2,T},p_{3,T})$ (since $p(T) \cdot e_1 = 0$) and $v_T=(v_{1,T},v_{2,T},v_{3,T})$, since $m(T)=-e_1$, this condition also rewrites
\[
-p^0=\max_{|v_T|\leq U }p(T)\cdot \begin{pmatrix}
0 \\ \alpha v_{2,T}-v_{3,T}\\ \alpha v_{3,T}+v_{2,T}
\end{pmatrix}=\max_{v_{2,T}^2+v_{3,T}^2= U ^2}\begin{pmatrix} v_{2,T}\\ v_{3,T}\end{pmatrix} \cdot \begin{pmatrix}
\alpha p_{2,T}+p_{3,T}\\ \alpha p_{3,T}-p_{2,T}
\end{pmatrix}. \]
The Cauchy-Schwarz inequality then implies that
$ - p^0 = U \sqrt{1 + \alpha^2} \sqrt{p_{2, T}^2 + p_{3, T}^2}$.
It follows that $p^0=-1$ (else, the pair $(p^0,p)$ would be trivial) and condition \eqref{eqMaxInst2} finally rewrites:
\begin{equation*}
    U  \sqrt{1 + \alpha^2} \abs{\varphi (T)} = - p^0 = 1.
\end{equation*}

\paragraph{Analysis of the optimality conditions.}
From the previous discussion, $u(t)$ is given by \eqref{expr:uopt18091} for a.e. $t \in [0, T]$, leading to
\begin{align*}
\abs{\varphi} \left(\alpha\left(u-(u\cdot m) m\right)-m\wedge u\right) & = A\left(\alpha^2 \varphi -\alpha \varphi\wedge m-\alpha m\wedge \varphi+m\wedge (\varphi\wedge m)\right)\\
& = A(\alpha^2 + 1) \varphi
\end{align*}
where $A=\frac{U }{(\alpha^2+1)^{1/2}}$, so that
\begin{align*}
\mathcal{H} (m(t),p(t),p^0,u(t))&=U (\alpha^2+1)^{1/2} \abs{\varphi} -p\cdot  \left(\alpha\left(Dm-(Dm\cdot m) m\right)-m\wedge Dm\right)\\
& = U (\alpha^2+1)^{1/2} \abs{\varphi} -Dm\cdot  \left(\alpha\left(p-(p\cdot m) m\right)-p\wedge m\right)\\
&= U (\alpha^2+1)^{1/2} \abs{\varphi} -Dm\cdot  \left(\alpha \varphi - \varphi \wedge m\right)\\
&= \frac{(\alpha^2+1)^{1/2} \abs{\varphi}}{U }\left(U ^2-Dm\cdot u\right)
\end{align*}
and we infer that 
\begin{equation} \label{eq:expr_max_ham}
\abs{\varphi (t)} \left(U ^2-Dm(t)\cdot u(t)\right)=\frac{U }{(\alpha^2+1)^{1/2}} > 0\quad \text{a.e. in $[0,T]$.}
\end{equation}

\paragraph{On condition \eqref{HamiltonCst}}

From the previous discussion, we have for any $t \in [0, T]$
\begin{equation}\label{eqMaxInst3}
\max_{|v|\leq U }\mathcal{H} (m(t),p(t),p^0,v)=-p^0 = 1,
\end{equation}
which leads at $t = 0$ to
\begin{equation} \label{eq:in_data_adj_cond}
    U  \sqrt{1 + \alpha^2} \abs{\varphi (0)} = - p^0 = 1.
\end{equation}
More generally, with the above expression \eqref{expr:uopt18091} of $u (t)$ which is the argmax of $\mathcal{H}$, we get
\begin{equation} \label{eq:H_const}
    1 = U  \sqrt{1 + \alpha^2} \abs{\varphi} - \varphi \cdot \Bigl[ \alpha Dm - m \wedge Dm \Bigr].
\end{equation}

\medskip

For the sake of clarity, we sum-up all these informations in the following result.

\begin{proposition}[Necessary first order optimality conditions]\label{prop:1stOrderCond}
Let $(T,u)$ denote an optimal pair for Problem~\eqref{OCP:minT}. Then, the adjoint state $p$ defined by \eqref{eq:adj_eq} does not vanish on $[0,T]$ and one has
\begin{equation}\label{expr:uopt1809}
u=\frac{U }{(\alpha^2+1)^{1/2}} \frac{\alpha \varphi  - \varphi \wedge m}{\abs{\varphi}},
\end{equation}
where $\varphi$ is given by $\varphi=p-(p\cdot m)m$.
In particular, one has $|u(t)|=U $ a.e. on $[0,T]$.

Moreover, $\varphi$ satisfies the differential relation \eqref{eq:complete_eq_phi} completed by the conditions \eqref{eq:expr_max_ham} and
\begin{equation}\label{cond:phi:timeTtime0}
 U  \sqrt{1 + \alpha^2} \abs{\varphi (0)}=U  \sqrt{1 + \alpha^2} \abs{\varphi (T)} = 1. 
\end{equation}

Finally, $m$ satisfies \eqref{eq:LL_ODE_bis}.
\end{proposition}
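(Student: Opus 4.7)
\textbf{Proof plan for Proposition~\ref{prop:1stOrderCond}.}

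The plan is to apply the Pontryagin Maximum Principle with restricted phase coordinates (since the state constraint $|m|^2=1$ is preserved by the dynamics, as noted before the statement), exactly in the form of \cite[Theorem~22]{MR0186436} adapted to minimal-time problems. This yields an absolutely continuous adjoint $p:[0,T]\to\R^3$ and a constant $p^0\in\{-1,0\}$, with $(p,p^0)\neq 0$, such that $\dot p$ is given by \eqref{eq:adj_eq}, the maximality conditions \eqref{eqMaxInst1}--\eqref{eqMaxInst2} hold, the Hamiltonian is conserved, and the transversality/orthogonality condition at $T$ reads $p(T)\cdot e_1=0$.

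First I would establish that $p$ does not vanish on $[0,T]$: if $p(t_0)=0$ for some $t_0$, then the Cauchy--Lipschitz theorem applied to the linear adjoint equation \eqref{eq:adj_eq} yields $p\equiv 0$; the final-time condition \eqref{eqMaxInst2} then forces $p^0=0$, contradicting non-triviality of $(p,p^0)$. Next, observing that $v\mapsto\mathcal H(m,p,p^0,v)$ is affine, I would use Karush--Kuhn--Tucker on the pointwise maximization: on any set $\mathcal I$ of positive measure where $|u|<U$, the gradient condition gives $\alpha(p-(p\cdot m)m)=p\wedge m$, and taking the inner product with $p$ yields $|p|^2=(p\cdot m)^2$, so that $\varphi:=p-(p\cdot m)m$ vanishes on $\mathcal I$. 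Since $\varphi$ satisfies the linear ODE \eqref{eq1155}, this forces $\varphi\equiv 0$ on $[0,T]$; combined with $p(T)\cdot e_1=0$ and $m(T)=-e_1$, this would yield $p(T)=0$, contradicting non-vanishing of $p$. Hence $|u|=U$ a.e.

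Once $|u|=U$ a.e.\ is established, I would solve the maximization $\max_{|v|=U}\mathcal H(m,p,p^0,v)$ explicitly: rearranging the KKT condition $\alpha(p-(p\cdot m)m)-p\wedge m=\mu u$ with $|u|=U$ and computing $|\alpha(p-(p\cdot m)m)-p\wedge m|^2=(\alpha^2+1)(|p|^2-(p\cdot m)^2)=(\alpha^2+1)|\varphi|^2$ fixes $\mu$ and yields the formula \eqref{expr:uopt1809} for $u$ in terms of $\varphi$. Substituting this expression into \eqref{LL:ODE} and into the ODE \eqref{eq1155} for $\varphi$ produces the reduced systems \eqref{eq:LL_ODE_bis} and \eqref{eq:complete_eq_phi}.

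Finally, I would read off the boundary conditions for $|\varphi|$ from the maximality conditions: evaluating $\mathcal H$ along the optimal trajectory using the just-obtained formula for $u$ gives $\mathcal H=U(\alpha^2+1)^{1/2}|\varphi|-Dm\cdot(\alpha\varphi-\varphi\wedge m)$, which, combined with the conservation identity \eqref{HamiltonCst} at any $t\in[0,T]$, produces \eqref{eq:expr_max_ham}. At $t=T$, using $m(T)=-e_1$ and $p(T)\cdot e_1=0$, the Cauchy--Schwarz computation for $\max_{|v_T|\leq U}\mathcal H(m(T),p(T),p^0,v_T)$ gives $U\sqrt{1+\alpha^2}|\varphi(T)|=-p^0$; this forces $p^0=-1$ (otherwise $(p^0,p)$ would be trivial), and evaluating the conservation identity at $t=0$ yields the twin condition $U\sqrt{1+\alpha^2}|\varphi(0)|=1$, which is \eqref{cond:phi:timeTtime0}. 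The main technical obstacle is the step ruling out $\{|u|<U\}$ of positive measure, because it requires exploiting both the linearity of the $\varphi$-equation \eqref{eq1155} (to propagate vanishing from a measurable set to all of $[0,T]$) and the orthogonality condition \eqref{eq:orth:pT} induced by the state constraint; once that is in hand, the rest is algebraic manipulation of the PMP output.
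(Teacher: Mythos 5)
Your proposal is correct and follows essentially the same route as the paper's derivation: invoke the PMP with restricted phase coordinates from \cite[Theorem~22]{MR0186436}, rule out a vanishing adjoint state via Cauchy--Lipschitz and nontriviality, rule out $\{|u|<U\}$ of positive measure by observing that the KKT condition there forces $\varphi=0$ and then propagating this via the linear ODE \eqref{eq1155} to $\varphi\equiv 0$, which contradicts $p(T)\neq 0$ once combined with $p(T)\cdot e_1=0$ and $m(T)=-e_1$; the explicit formula for $u$, the reduced ODEs, and the boundary conditions \eqref{cond:phi:timeTtime0} and \eqref{eq:expr_max_ham} then follow exactly as you describe from the maximality and Hamiltonian-conservation conditions. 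Your identification of the step excluding $\{|u|<U\}$ as the key technical point, and of the precise interplay between the linearity of \eqref{eq1155} and the orthogonality condition \eqref{eq:orth:pT}, matches the paper's argument closely.
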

 
\section{Proof of Theorem~\ref{th:ex_U_crit}}\label{proof:theo2}
\subsection{Preliminary results}
We first state preliminary results, in the form of a series of lemmas.

\begin{lemma} \label{lem:cont_flow}
    For all $T < \infty$, the map
    \begin{align*}
        L^\infty (0, T) &\rightarrow W^{1, \infty} (0, T) \\
        u &\mapsto m \text{ solution to \eqref{LL:ODE} with } m(0) = e_1
    \end{align*}
    is continuous and locally Lipschitz.
\end{lemma}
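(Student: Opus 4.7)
The plan is a standard Cauchy--Lipschitz style argument exploiting the fact that the dynamics preserve $\mathbb{S}^2$, so we only need Lipschitz estimates for the right-hand side restricted to a bounded set.

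Write the ODE \eqref{LL:ODE} as $\dot m = F(m,u)$ with
\[
F(m,u)\coloneqq \alpha\bigl(h_0(m)-(h_0(m)\cdot m)m\bigr)-m\wedge h_0(m),\qquad h_0(m)=-Dm+u.
\]
First, I would observe that $F$ is a polynomial of degree $3$ in $m$ and affine in $u$, and that $F(m,u)\cdot m=0$ whenever $|m|=1$, so by standard arguments the Cauchy problem is globally well-posed on $[0,T]$ and the solution satisfies $|m(t)|=1$ for all $t$. In particular, for any $R>0$, whenever $\|u\|_{L^\infty}\leq R$ and $|m|=1$, one has
\[
|F(m,u)|\leq C_1(R),\qquad |F(m_1,u_1)-F(m_2,u_2)|\leq C_2(R)\bigl(|m_1-m_2|+|u_1-u_2|\bigr),
\]
for some constants $C_1(R),C_2(R)>0$ depending only on $R$ and $D$, thanks to the fact that $F$ is polynomial and we restrict to the compact set $\mathbb{S}^2\times \overline{B}(0,R)$.

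Now fix $R>0$ and take $u_1,u_2\in L^\infty(0,T)$ with $\|u_i\|_{L^\infty}\leq R$, and let $m_i$ be the corresponding solutions with $m_i(0)=e_1$. Setting $w\coloneqq m_1-m_2$, one has
\[
\dot w(t)=F(m_1(t),u_1(t))-F(m_2(t),u_2(t)),\qquad w(0)=0,
\]
so by the Lipschitz estimate above,
\[
|\dot w(t)|\leq C_2(R)\bigl(|w(t)|+|u_1(t)-u_2(t)|\bigr).
\]
Integrating and applying Grönwall's lemma yields
\[
\|w\|_{L^\infty(0,T)}\leq T e^{C_2(R)T}\,C_2(R)\,\|u_1-u_2\|_{L^\infty(0,T)},
\]
and plugging this back into the pointwise bound for $|\dot w|$ gives a constant $K=K(R,T)$ such that
\[
\|m_1-m_2\|_{W^{1,\infty}(0,T)}\leq K(R,T)\,\|u_1-u_2\|_{L^\infty(0,T)}.
\]
This is exactly the local Lipschitz (hence continuity) property on the ball of radius $R$ in $L^\infty(0,T)$.

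The argument is essentially routine; the only mild subtleties are (i) invoking the $|m|=1$ conservation to confine the nonlinearity to a compact set so that polynomial bounds become uniform Lipschitz bounds, and (ii) upgrading an $L^\infty$ estimate on $w$ to a $W^{1,\infty}$ estimate, which costs nothing more than re-using the ODE. No hard obstacle is expected.
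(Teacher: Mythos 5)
Your proof takes essentially the same route as the paper's: form the difference $w=m_1-m_2$, use the constraint $|m_i|=1$ to obtain a Lipschitz bound on the right-hand side depending only on $\|u_i\|_{L^\infty}$ and $D$, apply Grönwall, and re-insert into the pointwise estimate to upgrade to $W^{1,\infty}$. The only difference is that the paper writes out the difference equation and the Lipschitz constant explicitly rather than invoking compactness of $\mathbb{S}^2\times\overline{B}(0,R)$; this is a cosmetic rather than a substantive distinction.
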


\begin{proof}
    Let $u_1, u_2 \in L^\infty (0, T)$ and $m_1, m_2$ the corresponding solution to \eqref{LL:ODE}. Define $\delta m \coloneqq m_1 - m_2$ and $\delta u \coloneqq u_1 - u_2$.
Simple, though tedious, calculations provide that $\delta m$ satisfies
    \begin{multline*}
        \frac{d\delta m}{dt} = \alpha \Bigl[ - D \, \delta m + \delta u - ((- D \, \delta m + \delta u) \cdot m_1) m_1 \\
        - ((- D m_2 + u_2) \cdot \delta m) m_1 - ((- D m_2 + u_2) \cdot m_2) \delta m \Bigr] \\ - m_1 \wedge (- D \delta m + \delta u) - \delta m \wedge (- D m_2 + u_2).
    \end{multline*}
    in $(0,T)$. Since $\abs{m_1} = \abs{m_2} = 1$, we obtain
    \begin{equation} \label{eq:est_delta_m_dot}
        \abs{\frac{d\delta m}{dt}} \leq \left( (4 \alpha + 2) \norm{D}_2 + (2\alpha+1) \abs{u_2} \right) \abs{\delta m} + (2 \alpha + 1) \abs{\delta u}, \quad \text{in }(0,T),
    \end{equation}
    where $\norm{\cdot}_2$ denotes the operator norm associated to the euclidean norm $\abs{\cdot}$.
    Since $\delta m (0) = 0$, we have for all $t \in (0, T)$
    \begin{equation*}
        \abs{\delta m (t)} \leq \int_0^t \abs{\frac{d\delta m}{dt}} (s) \diff s \leq (2 \alpha + 1) T \norm{\delta u}_{L^\infty} + \int_0^t \left( (4 \alpha + 2) \norm{D}_2 + (2\alpha+1) \norm{u_2}_{L^\infty} \right) \abs{\delta m} (s) \diff s,
    \end{equation*}
    and thus by Gronwall's lemma,
    \begin{equation*}
        \abs{\delta m (t)} \leq (2 \alpha + 1) T \norm{\delta u}_{L^\infty} \exp{\left( ((4 \alpha + 2) \norm{D}_2 + (2\alpha+1) \norm{u_2}_{L^\infty}) t \right)}, \quad t\in [0,T].
    \end{equation*}
    Using this estimate, and plugging it aslo in \eqref{eq:est_delta_m_dot}, we get
    \[ \| \delta m \|_{W^{1,\infty}} \le C(T, \| u_2 \|_{L^\infty}) \| \delta u \|_{L^\infty}, \]
    and the conclusion follows.
\end{proof}


\begin{lemma} \label{lem:reach_fin_time}
    If $\gamma_1 < \gamma_2$, there exists $\delta > 0$ such that for all $U > 0$, if $\abs{m (0) + e_1} < \delta$, then $- e_1$ can be reached in finite time with a control $u$ such that $\abs{u} \leq U$.
\end{lemma}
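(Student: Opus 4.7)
The plan is to combine the asymptotic stability of $-e_1$ for the free dynamics (established in Appendix~\ref{append:e1AS} under $\gamma_1 < \gamma_2$) with a small-control local steering argument based on the explicit reconstruction formula~\eqref{eq:u_m}. First I would use $u \equiv 0$ to drive $m$ arbitrarily close to $-e_1$, and then I would reach $-e_1$ exactly on a unit time interval with a control whose $L^\infty$ norm is proportional to the distance already achieved to the target.

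\textbf{Local steering step.} For $\tilde m \in B(-e_1, \epsilon) \cap \mathbb{S}^2$ with $\epsilon$ small, let $\theta_0 \in [0,\pi]$ denote the spherical distance from $\tilde m$ to $-e_1$, and let $m : [0,1] \to \mathbb{S}^2$ be the constant-speed parametrization of the geodesic arc from $\tilde m$ to $-e_1$. An elementary trigonometric identity gives $|m(t) + e_1| = 2\sin(\theta_0 (1-t)/2) \leq |\tilde m + e_1| \leq \epsilon$, so the arc stays in $B(-e_1, \epsilon) \cap \mathbb{S}^2$ and $|\dot m| \equiv \theta_0 = O(\epsilon)$. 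Reconstructing the control via \eqref{eq:u_m} with $\lambda = 0$,
\begin{equation*}
u = \frac{1}{1+\alpha^2}(\alpha \dot m + m \wedge \dot m) + Dm - (Dm \cdot m) m,
\end{equation*}
and using $m \perp \dot m$ with $|m| = 1$, the first term has norm exactly $|\dot m|/\sqrt{1+\alpha^2} = O(\epsilon)$. For the second, writing $m = -e_1 + \eta$ with $\eta_1 = |\eta|^2/2$ (from $|m| = 1$), a short expansion using $De_1 = \gamma_1 e_1$ yields $Dm - (Dm \cdot m) m = (D - \gamma_1 I)\eta + O(|\eta|^2) = O(\epsilon)$. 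Thus $\|u\|_{L^\infty(0,1)} \leq C\epsilon$ for a constant $C = C(\alpha, D)$ depending only on $\alpha$ and $D$.

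\textbf{Conclusion.} Let $\delta > 0$ be given by Appendix~\ref{append:e1AS} so that any $u \equiv 0$ trajectory starting in $B(-e_1, \delta) \cap \mathbb{S}^2$ converges to $-e_1$. For any $U > 0$, set $\epsilon := \min(\delta, U/C)$ and pick $T_\epsilon > 0$ such that the free trajectory from $m_0 \in B(-e_1, \delta)$ satisfies $|m(T_\epsilon) + e_1| \leq \epsilon$. Apply $u \equiv 0$ on $[0, T_\epsilon]$, then the local steering control of the previous step on $[T_\epsilon, T_\epsilon + 1]$. This drives $m$ to $-e_1$ in finite time with $|u| \leq U$ a.e., and $\delta$ is independent of $U$.

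The main technical point is the small-control estimate $\|u\|_\infty \leq C\epsilon$ in the local steering step. It relies crucially on $-e_1$ being an eigenvector of $D$: this is what makes the drift term $Dm - (Dm \cdot m) m$ vanish at the target, so that it is first-order in $|m + e_1|$. Without this alignment between the target and an eigendirection of $D$, the drift would be bounded away from zero in a neighborhood of $-e_1$ and arbitrarily small controls would not suffice to steer locally, which is precisely what makes the non-eigendirection targets subject to a critical threshold $U_\textnormal{crit}$.
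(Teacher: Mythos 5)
Your proof is correct and, for the local-steering step, takes a genuinely different route from the paper's. The paper applies a feedback control of norm exactly $U$ pointing towards $-e_1$, derives the ODE for the projected variable $\widetilde m = (m_2,m_3)$, and shows that $|\widetilde m|$ is strictly decreasing and reaches $0$ in finite time, choosing $\delta_U$ small enough that the nonlinear terms are dominated; the $U$-dependence of $\delta_U$ is then removed, as you do, by invoking asymptotic stability of $-e_1$. You instead exploit the flatness property, i.e. the reconstruction formula \eqref{eq:u_m}: you prescribe the trajectory as the geodesic arc from $\tilde m$ to $-e_1$ traversed in unit time, and read off the control. Your key estimate, that the drift $Dm-(Dm\cdot m)m$ expands as $(D-\gamma_1 I)\eta + O(|\eta|^2)$ with $\eta = m+e_1$ because $-e_1$ is an eigenvector of $D$, is correct and gives $\|u\|_\infty = O(\epsilon)$, so the control norm can be made $\leq U$ by choosing $\epsilon \lesssim U$. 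Your approach is cleaner and more explicit for this step, while the paper's is closer to the feedback/Lyapunov construction used later in Lemma~\ref{lem:U_stab} (where the reverse bound is needed).

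One small caveat, outside the proof itself: the concluding remark about non-eigendirection targets being the source of the critical threshold $U_\textnormal{crit}$ does not quite match the paper's mechanism. The target $-e_1$ is always an eigendirection of $D$ in this problem; the threshold in Theorem~\ref{th:ex_U_crit} arises because the initial state $e_1$ is itself asymptotically stable (Lemma~\ref{lem:U_stab} quantifies the strength needed to escape its basin of attraction), not because the drift near $-e_1$ is bounded away from zero. Your proof of the lemma is unaffected by this, but the interpretation should be adjusted. Also note that the $O(|\eta|^2)$ constants and the validity of the expansion require $\epsilon$ below some fixed $\epsilon_0$, so $\epsilon$ should be $\min(\delta,\epsilon_0,U/C)$; this is a trivial adjustment.
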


\begin{proof}
    Let us introduce $m$ as the solution to \eqref{LL:ODE} with the feedback control term
    \begin{equation*}
        u (t) = \frac{U}{\sqrt{1 + \alpha^2}} \frac{\alpha (- e_1 + (e_1 \cdot m(t)) m(t)) + e_1 \wedge m(t)}{\sqrt{1- (m(t) \cdot e_1)^2}},
    \end{equation*}
    so that the equation on $m$ becomes autonomous, and is well defined as long as $m (t) \neq \pm e_1$. Observing that 
    $$
   \left(m,\frac{e_1-(m\cdot e_1)m}{\sqrt{1-(m\cdot e_1)^2}},\frac{m\wedge e_1}{\sqrt{1-(m\cdot e_1)^2}}\right)
    $$
    is an orthonormal basis, one immediately gets that $|u(t)|=U$ for a.e. $t\in [0,T]$.
    
Denote $m=(m_1,m_2,m_3)$ the coordinates of $m$. From \eqref{LL:ODE}, the ODEs satisfied by $m_2$ and $m_3$ are
    \begin{align*} \notag
        \dot m_2 &= - \alpha [ (\gamma_2 - \gamma_1) m_2 - ((\gamma_2 - \gamma_1) m_2^2 + (\gamma_3 - \gamma_1) m_3^3) m_2 ] + (\gamma_1 - \gamma_3) m_1 m_3 + v_2, \\
        \dot m_3 &= - \alpha [ (\gamma_3 - \gamma_1) m_3 - ((\gamma_2 - \gamma_1) m_2^2 + (\gamma_3 - \gamma_1) m_3^3) m_3 ] - (\gamma_1 - \gamma_2) m_1 m_2 + v_3.
    \end{align*}
Therefore, by setting $\widetilde{m} \coloneqq (m_2, m_3)$, it follows that $\widetilde{m}$ solves the controlled system
    \begin{equation} \label{eq:ode_tilde_m_2}
        \dot{\tilde{m}} = A_- \tilde{m} + \xi_- + \tilde v,
    \end{equation}
    where
    \begin{eqnarray*}
        A_- &=& 
            \begin{bmatrix}
                - \alpha (\gamma_2 - \gamma_1) & (\gamma_3 - \gamma_1) \\
                - (\gamma_2 - \gamma_1) & - \alpha (\gamma_3 - \gamma_1)
            \end{bmatrix}, \\
        \xi_- &=& 
            \begin{bmatrix}
                \alpha ((\gamma_2 - \gamma_1) m_2^2 + (\gamma_3 - \gamma_1) m_3^2) m_2 - (\gamma_3 - \gamma_1) (1 + m_1) m_3 \\
                \alpha ((\gamma_2 - \gamma_1) m_2^2 + (\gamma_3 - \gamma_1) m_3^2) m_3 + (\gamma_2 - \gamma_1) (1 + m_1) m_2
            \end{bmatrix}
    \end{eqnarray*}
    and $\tilde v = (v_2, v_3)$ where $v = (v_1,v_2,v_3)=\alpha (u - (u \cdot m) m) - m \wedge u$, which means here 
    \begin{equation*}
        v = U \sqrt{1 + \alpha^2} \frac{(- e_1 + m_1 (t) m(t))}{\sqrt{\abs{m(t)}^2 - (m_1 (t))^2}} = U \sqrt{1 + \alpha^2} \frac{(- e_1 + m_1 (t) m(t))}{\abs{\tilde{m}}},
    \end{equation*}
    We infer that $\tilde{v} = U \sqrt{1 + \alpha^2} m_1 \tilde{m} /\abs{\tilde{m}}$.
   Observing that $ ( 1 - m_1 ) (1 + m_1) = \abs{\tilde{m}}^2$ yields, as soon as $m_1 \leq 0$,
    \begin{equation*}
        \abs{\xi_- (t)} \leq (1 + \abs{\alpha}) \, \delta \gamma_+ \, \abs{\tilde{m} (t)}^3
    \end{equation*}
    where $\delta \gamma_+ \coloneqq \gamma_3 - \gamma_1 > 0$, and also that
    \begin{equation*}
        \abs{\tilde{v} + U \sqrt{1 + \alpha^2} \frac{\tilde{m} (t)}{\abs{\tilde{m}}}} \leq U \sqrt{1 + \alpha^2} \abs{\tilde{m} (t)}^2.
    \end{equation*}
    With these estimates and by taking the inner product of \eqref{eq:ode_tilde_m_2} with $\tilde{m}$, we get
    \begin{equation*}
        \frac{1}{2} \frac{\diff}{\diff t} \abs{\tilde m (t)}^2 \leq - U \sqrt{1 + \alpha^2} \abs{\tilde{m} (t)} + (U \sqrt{1 + \alpha^2} + \norm{A_-}) \abs{\tilde{m} (t)}^2 + (1 + \abs{\alpha}) \, \delta \gamma_+ \, \abs{\tilde{m} (t)}^4,
    \end{equation*}
    and
    \begin{equation*}
       \frac{\diff}{\diff t} \abs{\tilde m (t)} \leq - U \sqrt{1 + \alpha^2} + (U \sqrt{1 + \alpha^2} + \norm{A_-}) \abs{\tilde{m} (t)} + (1 + \abs{\alpha}) \, \delta \gamma_+ \, \abs{\tilde{m} (t)}^3.
    \end{equation*}
    Let us introduce $\delta_U \in (0,1/2)$ small enough (depending on $U > 0$) so that
    \begin{equation}\label{choice:deltaU}
        (U \sqrt{1 + \alpha^2} + \norm{A_-}) \delta_U + (1 + \abs{\alpha}) \, \delta \gamma_+ \, \delta_U^3 \leq \frac{U \sqrt{1 + \alpha^2}}{2}.
    \end{equation}
Then, if $\abs{m (0) + e_1} < \delta_U$, which gives $\abs{\tilde{m} (0)} < \delta_U$, one has 
    \begin{equation*}
       \frac{\diff}{\diff t} \abs{\tilde m (t)} \leq - \frac{U \sqrt{1 + \alpha^2}}{2}<0,
    \end{equation*}
as long as $\abs{\tilde{m} (t)} < \delta_U$. This yields that, for such time intervals, the mapping $t\mapsto \abs{\tilde m (t)}$ is decreasing. 

Therefore, this shows that if $\delta_U$ satisfies \eqref{choice:deltaU} and $m(0)$ is such that $|m(0)+e_1|<\delta_U$, then $\abs{\tilde{m} (t)} < \delta_U$ for all $t \geq 0$ and that $\tilde m (t)$ reaches $0$ in finite time.
 In other words, $- e_1$ can be reached in finite time with a control $u$ such that $\abs{u} \leq U$ if $m$ is such that $\abs{m + e_1} < \delta_U$. 
 
    To conclude, it remains to drop the dependency of $\delta$ in $U$. Let us use that 
    $- e_1$ is asymptotically stable according to Proposition \ref{prop:asympt_stab}.
    Therefore, there exists $\delta>0$ such that, starting from a point $m(0)$ chosen so that $\abs{m (0) + e_1} < \delta$, we can first let the system evolve without control until we obtain $\abs{m (T_U) + e_1} < \delta_U$ for some finite time $T_U$. From this moment, we know we can reach $- e_1$ in finite time, whence the expected conclusion.
\end{proof}

Recall for the sake of readability that the notation $\mathcal{T}_U$ has been introduced in Section~\ref{sec:OCP}.
\begin{lemma} \label{lem:U_minus_eps}
    Let $\gamma_1 < \gamma_2$ and $U > 0$ such that $\mathcal{T}_U < \infty$. Then there exists $\varepsilon > 0$ such that $\mathcal{T}_{U-\varepsilon} < \infty$.
\end{lemma}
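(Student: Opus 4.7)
The plan is to combine a rescaling of any admissible control with the local reachability result of Lemma~\ref{lem:reach_fin_time}. Since $\mathcal{T}_U<\infty$, Theorem~\ref{th:ex_min} provides an admissible pair $(T,u)\in\mathcal{O}_U$, i.e., a control $u\in L^\infty(0,T)$ with $\|u\|_{L^\infty}\le U$ such that $m_u(T)=-e_1$. The idea is that if we slightly shrink this control, the resulting trajectory will end very close to $-e_1$, and from that close point we can finish the job with a strictly smaller control budget thanks to Lemma~\ref{lem:reach_fin_time}.

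More precisely, for $\theta\in(0,1]$ I would consider the rescaled control $u_\theta\coloneqq \theta u$, which clearly satisfies $\|u_\theta\|_{L^\infty}\le \theta U$, and denote by $m_\theta$ the corresponding solution of \eqref{LL:ODE} starting at $e_1$. By the continuous (in fact Lipschitz) dependence given by Lemma~\ref{lem:cont_flow}, the endpoint $m_\theta(T)$ depends continuously on $\theta$ and converges to $m_u(T)=-e_1$ as $\theta\to 1^-$. Let now $\delta>0$ be the \emph{universal} threshold provided by Lemma~\ref{lem:reach_fin_time} (crucially, this $\delta$ does not depend on the control bound). I would then fix $\theta\in(0,1)$ close enough to $1$ so that $|m_\theta(T)+e_1|<\delta$, and set $\varepsilon\coloneqq(1-\theta)U>0$, so that $\theta U = U-\varepsilon$.

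With this choice, Lemma~\ref{lem:reach_fin_time}, applied with the control bound $U-\varepsilon>0$ and starting point $m_\theta(T)$, provides a finite time $T'>0$ and a control $u'\in L^\infty(0,T')$ with $\|u'\|_{L^\infty}\le U-\varepsilon$ driving the state from $m_\theta(T)$ to $-e_1$. Concatenating $u_\theta$ on $[0,T]$ with $u'(\cdot-T)$ on $[T,T+T']$ yields an admissible control for the bound $U-\varepsilon$ steering $e_1$ to $-e_1$ in finite time, whence $(T+T',u^\ast)\in\mathcal{O}_{U-\varepsilon}$ and $\mathcal{T}_{U-\varepsilon}\le T+T'<\infty$. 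The only point requiring care is the uniformity in $U$ of the threshold $\delta$ in Lemma~\ref{lem:reach_fin_time}: it is precisely this feature that allows us to apply the lemma with the strictly smaller bound $U-\varepsilon$, and it is what makes the argument go through.
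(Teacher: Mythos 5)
Your proposal is correct and follows essentially the same route as the paper: rescale the admissible control by the factor $(U-\varepsilon)/U$, invoke the Lipschitz dependence of Lemma~\ref{lem:cont_flow} to land within the $U$-independent neighborhood $\delta$ of $-e_1$, and conclude with Lemma~\ref{lem:reach_fin_time}. Your explicit remark that the uniformity of $\delta$ in the control bound is the crucial point is exactly the feature the paper's argument relies on as well.
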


\begin{proof}
    Since $\mathcal{T}_U < \infty$ and according to Theorem~\ref{th:ex_min}, there exists $u_* \in L^\infty (0, \mathcal{T}_U)$ such that $m_* (0) = e_1$ and $m_* (\mathcal{T}_U) = - e_1$. Now, let us consider $m$ the solution to \eqref{LL:ODE} associated to the control choice $u = \frac{U - \varepsilon}{U} u_*$ for some $\varepsilon \in (0, U)$ to be defined later. From Lemma \ref{lem:cont_flow}, we obtain 
    \begin{equation*}
        \norm{m  - m_*}_{W^{1, \infty} (0, \mathcal{T}_U)} \leq C \norm{\frac{U - \varepsilon}{U} u_* - u_*}_{L^\infty (0, \mathcal{T}_U)} = C \varepsilon \frac{\norm{u_*}_{L^\infty (0, \mathcal{T}_U)}}{U} \leq C \varepsilon
    \end{equation*}
    for some $C > 0$.
    
    Since $m_* (\mathcal{T}_U) = - e_1$ by definition, we can take $\varepsilon > 0$ small enough so that $\abs{m (\mathcal{T}_U) + e_1} < \delta$, where $\delta > 0$ is given by Lemma \ref{lem:reach_fin_time}. From this lemma, we know we can reach $- e_1$ in finite time, and since $\abs{u} \leq U - \varepsilon$, this leads to $\mathcal{T}_{U-\varepsilon} < \infty$.
\end{proof}

\begin{lemma} \label{lem:T_U_decreas}
    $\mathcal{T}_U$ is non-increasing with respect to $U > 0$. In particular, if $\mathcal{T}_{U_0} < \infty$ for some $U_0 > 0$, then $\mathcal{T}_U < \infty$ for all $U > U_0$.
\end{lemma}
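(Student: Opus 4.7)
The plan is to observe that the monotonicity follows immediately from a simple set inclusion argument on the admissibility sets $\mathcal{O}_U$. The constraint $\eqref{constr:barU}$ depends on $U$ only through an $L^\infty$ bound on the control, which becomes less restrictive as $U$ grows.

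More precisely, the first step is to notice that for $0 < U_1 \leq U_2$, any $u \in L^\infty(0, T)$ satisfying $|u(t)| \leq U_1$ a.e.\ in $(0, T)$ also satisfies $|u(t)| \leq U_2$ a.e.\ in $(0, T)$. Since the dynamics \eqref{LL:ODE} and the target condition $m_u(T) = -e_1$ do not involve $U$, this yields directly the inclusion
\[
\mathcal{O}_{U_1} \subseteq \mathcal{O}_{U_2}.
\]
The infimum of $T$ over a larger set can only decrease (or stay equal), so $\mathcal{T}_{U_2} \leq \mathcal{T}_{U_1}$. This establishes the non-increasing character of $U \mapsto \mathcal{T}_U$.

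For the second statement, assume $\mathcal{T}_{U_0} < \infty$ for some $U_0 > 0$. By Theorem~\ref{th:ex_min}, this is equivalent to $\mathcal{O}_{U_0}$ being nonempty. Then for every $U > U_0$, the inclusion above gives $\mathcal{O}_U \supseteq \mathcal{O}_{U_0} \neq \emptyset$, and invoking Theorem~\ref{th:ex_min} once again, we conclude that $\mathcal{T}_U < \infty$. No obstacle is expected here: the proof is entirely set-theoretic and relies only on the fact that the constraint $|u| \leq U$ is monotone in $U$, together with the equivalence between finiteness of $\mathcal{T}_U$ and nonemptiness of $\mathcal{O}_U$ already established.
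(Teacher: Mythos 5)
Your proof is correct and follows exactly the same route as the paper, which also deduces the monotonicity from the inclusion $\mathcal{O}_{U_1}\subseteq\mathcal{O}_{U_2}$ for $U_1\leq U_2$ and the definition of $\mathcal{T}_U$ as an infimum. Your version merely spells out the details (and the use of Theorem~\ref{th:ex_min} for the second claim) more explicitly than the paper's one-line argument.
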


\begin{proof}
    This property is an immediate consequence of the definition of $\mathcal{T}_U$ and the fact that the sets $\mathcal{O}_{U }$ are increasing for the inclusion.
\end{proof}
\subsection{Emergence of a threshold}
The following result is the most crucial for concluding. It quantifies the asymptotic stability of $e_1$ for the evolution of the magnetization $m$, with respect to $u$ viewed as a perturbation. It is notable that its proof not only highlights the emergence of a threshold but also provides an explicit expression. 
\begin{lemma} \label{lem:U_stab}
Let us assume that $\gamma_1 < \gamma_2$. There exists $U_{\textnormal{stab}}>0$ depending only on $\gamma_3-\gamma_1$, $\gamma_2-\gamma_1$ and $\alpha$ such that, for any $U  < U_{\textnormal{stab}}$, the following holds. Let $(m,u)$ be a solution of \eqref{LL:ODE} on $[0,+\infty)$, such that $u \in L^\infty ([0, \infty))$ and $\norm{u}_{L^\infty} \leq U $. Then for all $t \ge 0$, $m_1(t) \ge 0$.
\end{lemma}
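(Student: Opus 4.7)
The plan is to exhibit a weighted Lyapunov-type scalar
\[
W(t) := \beta\, m_2(t)^2 + m_3(t)^2, \qquad \beta := \frac{\gamma_2-\gamma_1}{\gamma_3-\gamma_1} \in (0,1],
\]
which is well-defined and nondegenerate since $\gamma_1<\gamma_2\leq\gamma_3$ forces $\gamma_1<\gamma_3$. I claim it suffices to show that for $U$ small enough, $W(t)<\beta$ for all $t\geq 0$. Indeed the trajectory of interest starts at $m(0)=e_1$ (the initial condition fixed in Problem~\eqref{OCP:minT}), so $W(0)=0$; and if $m_1(t)=0$ for some $t$, then $m_2(t)^2+m_3(t)^2=1$, whence $W(t)\geq \beta(m_2^2+m_3^2)=\beta$ using $\beta\leq 1$.

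First, I would derive a differential inequality for $W$. Writing \eqref{LL:ODE} componentwise and using $|m|^2=1$, a direct computation of $\dot W = 2\beta m_2\dot m_2+2m_3\dot m_3$ splits the right-hand side into a drift part (that obtained by setting $u=0$) and a control part affine in $u$. The key algebraic point is that our specific choice of $\beta$ simultaneously annihilates the coefficient $\beta(\gamma_1-\gamma_3)+(\gamma_2-\gamma_1)$ of the cross term $m_1 m_2 m_3$ in the drift, and renders the quartic remainder $(\beta-1)(\gamma_3-\gamma_2)m_2^2 m_3^2$ non-positive. Combined with the pointwise bound $(\gamma_2-\gamma_1)^2 m_2^2+(\gamma_3-\gamma_1)^2 m_3^2\geq (\gamma_2-\gamma_1)(\gamma_3-\gamma_1)\,W$, this gives that the drift part of $\dot W$ is bounded above by $-a\,m_1^2\,W$, with $a:=2\alpha(\gamma_2-\gamma_1)>0$. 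The control part, which only involves products of $u$ with components of $m$, is bounded termwise via Cauchy--Schwarz and $|u|\leq U$: using $\beta^2 m_2^2+m_3^2\leq W$ and $|\tilde m|^2\leq W/\beta$ (with $\tilde m:=(m_2,m_3)$), its absolute value is at most $B\,U\sqrt{W}$ for some explicit constant $B$ depending only on $\alpha$ and $\beta$. As long as $W\leq\beta$, one has $m_1^2=1-|\tilde m|^2\geq 1-W/\beta$, and combining the two estimates yields
\[
\dot W \leq g(W),\qquad g(w) := -a\Bigl(1-\tfrac{w}{\beta}\Bigr) w + B\,U\sqrt{w}.
\]

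Writing $g(w)=\sqrt{w}\bigl[BU-a(1-w/\beta)\sqrt{w}\bigr]$ and using that $w\mapsto(1-w/\beta)\sqrt{w}$ attains maximum $\tfrac{2}{3}\sqrt{\beta/3}$ at $w=\beta/3$, $g$ has exactly two zeros $0<w_1<w_2<\beta$ and is strictly negative on $(w_1,w_2)$ precisely when
\[
U < U_{\textnormal{stab}} := \frac{2a}{3B}\sqrt{\tfrac{\beta}{3}},
\]
a quantity depending only on $\alpha$, $\gamma_2-\gamma_1$ and $\gamma_3-\gamma_1$, as required. It then remains to conclude by contradiction: assume $T^\star:=\inf\{t\geq 0:m_1(t)<0\}<\infty$, so that $m_1\geq 0$ on $[0,T^\star]$, and $W$ is continuous with $W(0)=0<w_1<w_2<\beta\leq W(T^\star)$. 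Setting $t_1:=\inf\{t\in[0,T^\star]:W(t)\geq w_2\}$ and $s_1:=\sup\{t\in[0,t_1]:W(t)\leq w_1\}$, continuity yields $W(s_1)=w_1$, $W(t_1)=w_2$, and $W(t)\in(w_1,w_2]$ on $(s_1,t_1]$; since $g\leq 0$ on that range, integrating the differential inequality gives $w_2-w_1=W(t_1)-W(s_1)\leq 0$, contradicting $w_1<w_2$. The main obstacle is the initial algebraic identification of the correct weight $\beta$: this one choice simultaneously kills the cross term $m_1 m_2 m_3$ and keeps the quartic remainder non-positive, thereby revealing the effective contraction rate; once it is in place, the proof reduces to a scalar ODE comparison with a $U$-dependent right-hand side.
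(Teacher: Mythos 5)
Your proof is correct, but it follows a genuinely different route from the paper's. The paper works with $\tilde m=(m_2,m_3)$ directly, writes $\dot{\tilde m}=A\tilde m+\xi+\tilde v$ with $A$ the linearization at $e_1$, invokes Duhamel's formula, bounds $\int_0^t\|e^{sA}\|_2\,ds$ through a three-way case distinction on the discriminant of the characteristic polynomial of $A$, and closes the bootstrap with a small fixed-point lemma for $x\mapsto a^{-1}(bx^3+c)$. You instead build the anisotropic quadratic Lyapunov function $W=\beta m_2^2+m_3^2$ with $\beta=(\gamma_2-\gamma_1)/(\gamma_3-\gamma_1)$; I checked the two key identities and they hold: the coefficient of $m_1m_2m_3$ in the drift of $\dot W$ is $2(-\beta(\gamma_3-\gamma_1)+(\gamma_2-\gamma_1))=0$, and the residual quartic coefficient $2(\gamma_2-\gamma_1)-\beta(\gamma_2-\gamma_1)-(\gamma_3-\gamma_1)=(\beta-1)(\gamma_3-\gamma_2)\le 0$, so the drift is indeed $\le -2\alpha(\gamma_2-\gamma_1)m_1^2W$, and the control contribution is $\le 2\sqrt{1+\alpha^2}\,U\sqrt W$. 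The scalar comparison with $g(w)=-a(1-w/\beta)w+BU\sqrt w$ and the barrier argument between the two zeros $w_1<w_2<\beta$ are sound (note $W$ is Lipschitz, so integrating the a.e.\ inequality is legitimate), and your threshold $U_{\textnormal{stab}}=\frac{2a}{3B}\sqrt{\beta/3}$ depends only on $\alpha$, $\gamma_2-\gamma_1$, $\gamma_3-\gamma_1$, as required. What each approach buys: the paper's spectral/Duhamel argument is more systematic (it would survive perturbations of the structure of $A$) but forces the case analysis on $\Delta$ and yields a threshold expressed through $\|A\|_2$ and eigenvalues (hence the explicit computations relegated to Appendix~\ref{append:comp}, and the constants quoted in Remark~\ref{rk:theoremUcrit}, which would have to be restated if your proof were substituted); your Lyapunov weight exploits the specific algebra of the Landau--Lifshitz drift to avoid all case distinctions and produces a single closed-form threshold, at the price of the one non-obvious step of guessing $\beta$. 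The two thresholds differ numerically, but both are only lower bounds for $U_{\textnormal{crit}}$, so either is acceptable for Theorem~\ref{th:ex_U_crit}.
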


In other words, $m$ remains in the hemisphere with pole $e_1$, and in particular, $m$ can not reach $-e_1$. One has the same statement if $(m,u)$ are defined on a bounded interval $[0,T]$.

\begin{proof}
    Let $v = \alpha (u - (u \cdot m) m) - m \wedge u$. Then, using that $v$ reads as the sum of two orthogonal terms, one has $\abs{v}^2 \leq (1 + \alpha^2) U ^2$.
    Moreover, since $\abs{m}^2 = 1$, there holds
    \begin{equation*}
        Dm \cdot m = \gamma_1 + (\gamma_2 - \gamma_1) m_2^2 + (\gamma_3 - \gamma_1) m_3^2.
    \end{equation*}
As in the proof of Lemma \ref{lem:reach_fin_time}, $\widetilde{m}$ solves the controlled system 
    \begin{equation} \label{eq:ode_tilde_m}
        \dot{\widetilde{m}} = A \widetilde{m} + \xi + \tilde v.
    \end{equation}
where
    \begin{equation} \label{def:A_xi}
        A = 
            \begin{bmatrix}
                - \alpha (\gamma_2 - \gamma_1) & - (\gamma_3 - \gamma_1) \\
                \gamma_2 - \gamma_1 & - \alpha (\gamma_3 - \gamma_1)
            \end{bmatrix}, \quad
        \xi = 
            \begin{bmatrix}
                \alpha ((\gamma_2 - \gamma_1) m_2^2 + (\gamma_3 - \gamma_1) m_3^2) m_2 + (\gamma_3 - \gamma_1) (1 - m_1) m_3 \\
                \alpha ((\gamma_2 - \gamma_1) m_2^2 + (\gamma_3 - \gamma_1) m_3^2) m_3 - (\gamma_2 - \gamma_1) (1 - m_1) m_2
            \end{bmatrix}.
    \end{equation}
    Pay attention to the sign change between $A$ and $\xi$ used here and $A_-$ and $\xi_-$ introduced in the proof of Lemma \ref{lem:reach_fin_time}. 
    %
    %
    Let $\nu \in (0, 1]$ to be fixed later and define
    \[
    T_\nu = \inf \{ t \geq 0 \ |\ |\widetilde{m}(t)| \geq \nu \}.
    \]
    Our goal is to derive suitable bounds on $\tilde m$, so that for a well chosen $\nu$, $T_\nu = +\infty$.
    
    Since $m(0) = e_1$ and $m$ is continuous, we know that $T_\nu > 0$. Note that one has necessarily $m_1(\cdot)>0$ on $(0,T_\nu)$. Then, for all $t \in [0, T_\nu)$, using that $m$ is normalized, there holds like previously
  $
        0 \leq 1 - m_1 (t)\leq 1 - m_1 (t)^2 = \abs{\widetilde{m} (t)}^2,
   $
    and therefore
    \begin{equation*}
        \abs{\xi (t)} \leq (1 + \abs{\alpha}) \, \delta \gamma_+ \, \abs{\widetilde{m} (t)}^3 \leq (1 + \abs{\alpha}) \, \delta \gamma_+ \, \nu^3,
    \end{equation*}
    where $\delta \gamma_+ \coloneqq \gamma_3 - \gamma_1 \geq \gamma_2 - \gamma_1 \eqqcolon \delta \gamma_- > 0$.
    On the other hand, thanks to the Duhamel formula on \eqref{eq:ode_tilde_m} using the fact that $\widetilde{m} (0) = 0$, there holds
    \begin{equation*}
        \widetilde{m} (t) = \int_0^t \exp{\left( (t-s) A \right)} (\xi (s) + \tilde v(s)) \diff s
    \end{equation*}
     for all $t \geq 0$. This, together with the previous estimates, drives to
    \begin{align}
        \abs{\widetilde{m} (t)} &\leq \int_0^t \norm{\exp{\left( (t-s) A \right)}}_2 \left( (1 + \abs{\alpha}) \delta \gamma_+ \nu^3 + \sqrt{1 + \alpha^2} U  \right) \diff s \nonumber \\
            &\leq (1 + \abs{\alpha}) ( \delta \gamma_+ \nu^3 + U  ) \int_0^t \norm{\exp{\left( (t-s) A \right)}}_2 \diff s \nonumber \\
            &\leq (1 + \abs{\alpha}) ( \delta \gamma_+ \nu^3 + U  ) \int_0^{t} \norm{\exp{\left( s A \right)}}_2 \diff s,\label{m:1903} 
    \end{align}
    for all $t \in [0, T_\nu)$, where $\norm{\cdot}_2$ still denotes the operator norm associated to the euclidean norm $\abs{\cdot}$.

    We will now provide an estimate of the norm of the exponential matrix. Recall that the characteristic polynomial of $A$ is $P_A (X) = X^2 - \Tr (A) X + \det(A)$ with
    \begin{equation*}
        \det(A) = (1 + \alpha^2) \delta \gamma_- \, \delta \gamma_+ > 0, \qquad
        \Tr (A) = - \alpha (\delta \gamma_- + \delta \gamma_+) < 0.
    \end{equation*}
    Its discriminant $\Delta$ reads $\Delta = \Tr (A)^2 - 4 \det(A) = \alpha^2 (\delta \gamma_+ - \delta \gamma_-)^2 - 4\delta \gamma_- \delta \gamma_+$. To compute the eigenvalues of $A$, we have to distinguish between several cases.

\paragraph{1st case: $\Delta > 0$.}
    Then its eigenvalues are $\lambda_\pm \coloneqq \frac12(\Tr (A) \pm \sqrt{\Delta})$.
    Remark that both eigenvalues of $A$ are negative (according to the signs of the trace and the determinant above) and different from each other, which means that $A$ is diagonalizable. 
    Therefore, we infer\footnote{Here, the Lagrange interpolation formula is used to compute the exponential of $A$: for every matrix $M\in \mathcal{M}_d(\C)$ whose spectrum $\{\lambda_i\}_{1\leq i\leq d}$ consists of distinct eigenvalues, one has
    $$
    \exp(M)=\sum_{j=1}^de^{\lambda_j}\prod_{i\neq j}\frac{M-\lambda_i\operatorname{I}_d}{\lambda_j-\lambda_i} .
    $$
    } that
    \begin{align*}
        \exp{( sA )} &= e^{s \lambda_+} \frac{sA - s \lambda_- \operatorname{I}_2}{s \lambda_+ - s \lambda_-} + e^{s \lambda_-} \frac{sA - s \lambda_+ \operatorname{I}_2}{s \lambda_- - s \lambda_+} = \frac{1}{\sqrt{\Delta}} \left( e^{s \lambda_+} (A - \lambda_- \operatorname{I}_2) - e^{s \lambda_-} (A - \lambda_+ \operatorname{I}_2) \right) \\
            &= \frac{1}{\sqrt{\Delta}} \left( (e^{s \lambda_+} - e^{s \lambda_-}) A + (e^{s \lambda_-} \lambda_+ - e^{s \lambda_+} \lambda_-) \operatorname{I}_2 \right) \\
            &= \frac{e^{s \lambda_+}}{\sqrt{\Delta}} \left( (1 - e^{- s \sqrt{\Delta}}) A + (e^{- s \sqrt{\Delta}} \lambda_+ - \lambda_-) \operatorname{I}_2 \right) \\
            &= s e^{s \lambda_+} \biggl( \frac{1 - e^{- s \sqrt{\Delta}}}{s \sqrt{\Delta}} A - \lambda_- \frac{1 - e^{- s \sqrt{\Delta}}}{s \sqrt{\Delta}}  \operatorname{I}_2 \biggr) + e^{s \lambda_-} \operatorname{I}_2.
    \end{align*}
    Thus, using the facts that $\lambda_- < \lambda_+ < 0$, $\norm{A}_2 \geq \abs{\lambda_-}$ and also that the function $f$ given by $f(x) = \frac{1 - e^{-x}}{x}$ analytically extended to $\R$ is uniformly bounded by 1 on $[0, \infty)$, we get
    \begin{equation*}
        \norm{\exp{( sA )}}_2 \leq e^{s \lambda_+} \left( s (\norm{A}_2 + \abs{\lambda_-}) + 1 \right) \leq e^{s \lambda_+} \left( 2 s \norm{A}_2 + 1 \right).
    \end{equation*}
    Hence,
    \begin{align*}
        \int_0^{t} \norm{\exp{\left( s A \right)}}_2 \diff s &\leq \abs{\lambda_+}^{-1} (1 - e^{\lambda_+ t}) + 2 \norm{A}_2 \abs{\lambda_+}^{-2} (1 - (|\lambda_+| t + 1) e^{\lambda_+ t}) \\
            &\leq (1 - e^{\lambda_+ t}) \left( \abs{\lambda_+}^{-1} + 2 \norm{A}_2 \abs{\lambda_+}^{-2} \right) \\
            &\leq 3 (1 - e^{\lambda_+ t}) \norm{A}_2 \abs{\lambda_+}^{-2},
    \end{align*}
    and according to \eqref{m:1903}, one has for all $t \in [0, T_\nu)$
    \begin{equation*}
        \abs{\widetilde{m} (t)} \leq 3 \norm{A}_2 \abs{\lambda_+}^{-2} (1 - e^{\lambda_+ t}) (1 + \abs{\alpha}) ( \delta \gamma_+ \nu^3 + U  )
    \end{equation*}
    To conclude, we will choose $U $ adequately so that the function $x \mapsto 3 \norm{A}_2 \abs{\lambda_+}^{-2} (1 + \abs{\alpha}) ( \delta \gamma_+ x^3 + U  )$ admits a fixed point $x_0$ in $(0, 1]$. 
    This is possible thanks to the next lemma, whose proof is postponed to the end of this section for the sake of clarity.
    \begin{lemma} \label{lem:fixed_point}
        Let $a, b, c > 0$. The function $x \mapsto a^{-1} (b x^3 + c)$ has a fixed point $x_0$ in $(0, 1]$ if, and only if $c \leq a x_1 - b x_1^3$ where $x_1 = \min \{1, \sqrt{\frac{a}{3 b}} \}$.
    \end{lemma}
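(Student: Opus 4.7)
The plan is to reformulate the fixed point equation as a range problem for a cubic polynomial and then apply elementary calculus. Setting $f(x) = a^{-1}(bx^3+c)$, the condition $f(x_0)=x_0$ is equivalent to $c = ax_0 - bx_0^3$. So defining $g(x) := ax - bx^3$ on $[0,1]$, the question becomes: for which $c>0$ does $g$ attain the value $c$ on $(0,1]$?

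Next, I would study $g$ on $[0,1]$. Since $g(0)=0$ and $g'(x) = a - 3bx^2$, the derivative vanishes at the unique positive root $\bar x = \sqrt{a/(3b)}$, so $g$ is strictly increasing on $[0,\bar x]$ and strictly decreasing beyond. The maximum of $g$ on $[0,1]$ is thus attained at $\bar x$ if $\bar x \leq 1$ (i.e.\ $a\leq 3b$), and at $1$ otherwise (i.e.\ $a>3b$). In both situations the maximizer is exactly $x_1 = \min\{1,\sqrt{a/(3b)}\}$, with maximum value $g(x_1) = ax_1 - bx_1^3$.

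Then the conclusion follows from the intermediate value theorem: on the interval $[0,x_1]$, $g$ is continuous, increases from $0$ to $g(x_1)$, so for any $c\in(0,g(x_1)]$ there exists $x_0\in(0,x_1]\subseteq(0,1]$ with $g(x_0)=c$, i.e.\ a fixed point of $f$. Conversely, if $c>g(x_1)=\max_{[0,1]}g$, no fixed point can exist in $(0,1]$. This gives the stated equivalence $c\leq ax_1 - bx_1^3$.

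There is no real obstacle here; the only small care needed is to notice that the case distinction $a\leq 3b$ versus $a>3b$ is precisely encoded in the definition of $x_1$, which lets one state the maximum uniformly as $ax_1 - bx_1^3$ and avoid a bifurcated statement.
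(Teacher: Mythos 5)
Your proof is correct and takes essentially the same route as the paper: you study the cubic $g(x)=ax-bx^3$ (the paper uses $f(X)=bX^3-aX+c=c-g(X)$), locate the critical point $\sqrt{a/(3b)}$, identify $x_1$ as the constrained maximizer on $[0,1]$, and conclude by the intermediate value theorem. The two arguments are the same up to the sign convention on the auxiliary polynomial.
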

    Remark that if $x_1$ is as in this lemma, one has $a x_1 - b x_1^3 \geq \frac{2}{3} a x_1>0$.
    Hence, setting $a=|\lambda_+|^2/(3\norm{A}_2(1+|\alpha|))$, $b=\delta \gamma_+$ and $c=U $ drives us to 
    assume that 
    \begin{equation*}
        U  \leq \frac{\abs{\lambda_+}^2}{3 \norm{A}_2 (1 + \abs{\alpha})} x_1 - \delta \gamma_+ \, x_1^3, \quad\text{with}\quad x_1 \coloneqq \min \left\{ 1, \frac{\abs{\lambda_+}}{3 \sqrt{\norm{A}_2 (1 + \abs{\alpha}) \delta \gamma_+}} \right\},
    \end{equation*}
    we can take $\nu = x_0$ provided by Lemma~\ref{lem:fixed_point}, and the previous estimate leads to
    \begin{equation*}
        \abs{\widetilde{m}(t)} \leq (1 - e^{\lambda_+ t}) \nu,
    \end{equation*}
    for all $t \in [0, T_\nu)$. A continuity argument then implies that $T_\nu = \infty$. In other words, for all $t \geq 0$, $|m_1(t)| = \sqrt{1-|\tilde m(t)|^2} \ge \sqrt{1-(1 - e^{\lambda_+ t})^2 \nu^2} >0$.
    Now, $m_1$ is continuous, so that it keeps a constant sign. As $m_1(0)=1$, $m_1(t) \ge 0$ for all $t \ge 0$, which is the desired conclusion.
    \medskip
    
    \paragraph{2nd case: $\Delta < 0$.}
    In this case, the eigenvalues are
    \begin{equation*}
        \lambda_\pm \coloneqq \frac{\Tr (A) \pm i \sqrt{- \Delta}}{2}.
    \end{equation*}
    One more time, the two eigenvalues are distinct, complex conjugate with negative real part. Yet, the same decompositions as previously can still be applied and there holds
    \begin{align*}
        \exp{( sA )} &= e^{s \lambda_+} \frac{sA - s \lambda_- \operatorname{I}_2}{s \lambda_+ - s \lambda_-} + e^{s \lambda_-} \frac{sA - s \lambda_+ \operatorname{I}_2}{s \lambda_- - s \lambda_+} \\
            &= \frac{1}{i \sqrt{- \Delta}} \left( e^{s \lambda_+} (A - \lambda_- \operatorname{I}_2) - e^{s \lambda_-} (A - \lambda_+ \operatorname{I}_2) \right) \\
            &= \frac{1}{i \sqrt{- \Delta}} \left( (e^{s \lambda_+} - e^{s \lambda_-}) A + (e^{s \lambda_-} \lambda_+ - e^{s \lambda_+} \lambda_-) \operatorname{I}_2 \right) \\
            &= \frac{e^{\frac{s}{2} \Tr (A)}}{i \sqrt{- \Delta}} \left( (e^{\frac{i s \sqrt{- \Delta}}{2}} - e^{- \frac{i s \sqrt{- \Delta}}{2}}) A + (\lambda_+ e^{- \frac{i s \sqrt{- \Delta}}{2}} - \lambda_- e^{\frac{i s \sqrt{- \Delta}}{2}}) \operatorname{I}_2 \right) \\
            &= \frac{e^{\frac{s}{2} \Tr (A)}}{\sqrt{- \Delta}} \Bigl[ 2 \sin{\left( \frac{s \sqrt{- \Delta}}{2} \right)} A + \left( \sqrt{- \Delta} \cos{\left( \frac{s \sqrt{- \Delta}}{2} \right)} - \Tr (A) \sin{\left( \frac{s \sqrt{- \Delta}}{2} \right)} \right) \operatorname{I}_2 \Bigr] \\
            &= \frac{s}{2} e^{\frac{s}{2} \Tr (A)} \Bigl[ 2 A - \Tr (A) \operatorname{I}_2 \Bigr] \sinc{\left( \frac{s \sqrt{- \Delta}}{2} \right)} + e^{\frac{s}{2} \Tr (A)} \cos{\left( \frac{s \sqrt{- \Delta}}{2} \right)} \operatorname{I}_2.
    \end{align*}
    Thus, we get
    \begin{equation*}
        \norm{\exp{( sA )}}_2 \leq \frac{s}{2} e^{\frac{s}{2} \Tr (A)} \left(2 \norm{A}_2 - \Tr (A) \right) + e^{\frac{s}{2} \Tr (A)} \leq e^{\frac{s}{2} \Tr (A)} \left( 2 s \norm{A}_2 + 1 \right),
    \end{equation*}
    since $\Tr (A) = \lambda_+ + \lambda_-$ and $\abs{\lambda_\pm} \leq \norm{A}_2$.
    Hence, following the same way as in the first case, we get
    \begin{align*}
        \int_0^{t} \norm{\exp{\left( s A \right)}}_2 \diff s &\leq (1 - e^{\frac{1}{2} \Tr (A) t}) \left( 2 \abs{\Tr (A)}^{-1} + 8 \norm{A}_2 \abs{\Tr (A)}^{-2} \right) \\
            &\leq 12 (1 - e^{\frac{1}{2} \Tr (A) t}) \norm{A}_2 \abs{\Tr (A)}^{-2},
    \end{align*}
    and according to \eqref{m:1903}, one has for all $t \in [0, T_\nu)$
    \begin{equation*}
        \abs{\widetilde{m} (t)} \leq 12 \norm{A}_2 \abs{\Tr (A)}^{-2} (1 - e^{\frac{1}{2}\Tr (A) t}) (1 + \abs{\alpha}) ( \delta \gamma_+ \nu^3 + U  )
    \end{equation*}
    Now, by mimicking the reasoning done in the first case, by assuming
    \begin{equation*}
        U  \leq \frac{\Tr (A)^2}{12 \norm{A}_2 (1 + \abs{\alpha})} x_1 - \delta \gamma_+ \, x_1^3, \quad\text{with}\quad  x_1 \coloneqq \min \left\{ 1, \frac{\Tr (A)}{6 \sqrt{\norm{A}_2 (1 + \abs{\alpha}) \delta \gamma_+}} \right\},
    \end{equation*}
    and taking $\nu = x_0$ given by Lemma~\ref{lem:fixed_point}, the previous estimate leads to
    \begin{equation*}
        \abs{\widetilde{m}(t)} \leq (1 - e^{\frac{1}{2} \Tr (A) t}) \nu,
    \end{equation*}
    for all $t \in [0, T_\nu)$. Arguing as in the first case, we infer that $T_\nu = \infty$ in this case as well, and then, $m_1(t) > 0$ for all $t \ge 0$.
    \medskip
    
    \paragraph{3rd case: $\Delta = 0$.}
    In this case, both eigenvalues are equal, one has $\lambda = \Tr (A)/2<0$.
    Note that, in that case, $A - \frac{1}{2} \Tr (A) \operatorname{I}_2$ is therefore a non-zero nilpotent matrix, and more precisely $(A - \frac{1}{2}\Tr (A) \operatorname{I}_2)^2 = (s A - \frac{s}{2} \Tr (A) \operatorname{I}_2)^2 = 0$. Thus, there holds
    \begin{align*}
        \exp{(sA)} &= \exp{\left( \frac{s}{2} \Tr (A) \operatorname{I}_2 \right)} \, \exp{( sA - \frac{s}{2} \Tr (A) \operatorname{I}_2 )} \\
            &= e^{\frac{s}{2} \Tr (A)} (\operatorname{I}_2 + sA - \frac{s}{2} \Tr (A) \operatorname{I}_2)
    \end{align*}
    which yields
    \begin{equation*}
        \norm{\exp{(sA)}}_2 \leq e^{\frac{s}{2} \Tr (A)} (1 + s (\norm{A}_2 - \frac{1}{2} \Tr (A))) \leq e^{\frac{s}{2} \Tr (A)} (1 + 2 s \norm{A}_2).
    \end{equation*}
    The computations are then exactly the same ones as in the second case, and the conclusion follows in the same fashion.
\end{proof}

\begin{proof}[Proof of Lemma \ref{lem:fixed_point}]
    We are looking for a root $x_0 \in (0, 1]$ of the polynomial function $f$ given by $f (X) = b X^3 - a X + c$, whose derivative $3 b X^2 - a$ is negative for $X < \sqrt{\frac{a}{3 b}} \eqqcolon x_2$ and positive for $X > x_2$. The minimum in $[0, 1]$ is therefore reached at $x_1$ and, since $f (0) = c > 0$, there is a root if and only if $f(x_1) \leq 0$, which corresponds to the assumption in the statement.
    %
\end{proof}

\begin{remark} \label{rk:theoremUcrit}
From the proof of Lemma~\ref{lem:U_stab}, we obtained the following expression for $U_{\text{stab}}$. Consider the matrix $A$ defined there in \eqref{def:A_xi}, denote
$\Delta = \Tr (A)^2 - 4 \det(A) $ the discriminant of its characteristic polynomial, $\lambda_\pm$ its eigenvalues chosen so that $\lambda_+>\lambda_-$ whenever $\Delta>0$, and $\delta \gamma_+ \coloneqq \gamma_3 - \gamma_1 > 0$. 

Let
\[
x_1(A) \coloneqq \left\{\begin{array}{ll}
\min \left\{ 1, \frac{\abs{\lambda_+}}{3 \sqrt{\norm{A}_2 (1 + \abs{\alpha}) \delta \gamma_+}} \right\} & \text{if }\Delta >0\\
\min \left\{ 1, \frac{\operatorname{Tr}(A)}{6 \sqrt{\norm{A}_2 (1 + \abs{\alpha}) \delta \gamma_+}} \right\} & \text{else}.
\end{array}\right.
\]
Then 
\[ U_{\rm stab} =
\Gamma(\Delta) \coloneqq \left\{\begin{array}{ll}
\frac{\abs{\lambda_+}^2}{3 \norm{A}_2 (1 + \abs{\alpha})} x_1(A) - \delta \gamma_+ \, x_1(A)^3 & \text{if }\Delta >0\\
\frac{\operatorname{Tr}(A)^2}{12 \norm{A}_2 (1 + \abs{\alpha})} x_1(A) - \delta \gamma_+ \, x_1(A)^3 & \text{else}.
\end{array}\right.
\]
Note also that, to complement this result, explicit computations of the quantities involved (like $\| A \|_2$) are provided in Appendix~\ref{append:comp}.
\end{remark}

We now have all the elements to conclude the:

\begin{proof}[Proof of Theorem \ref{th:ex_U_crit}]

  Define
    \begin{equation*}
        U_\textnormal{crit} \coloneqq \inf \{ U \, | \, \mathcal{T}_U < \infty \}.
    \end{equation*}
    From Lemma \ref{lem:U_stab}, we know that $U_\textnormal{crit} > 0$. Lemma~\ref{lem:T_U_decreas}  proves the second point. To investigate the case where $U =U_\textnormal{crit}$, observe that, by definition, \eqref{OCP:minT} has no solution for all $U \in (0, U_\textnormal{crit})$. Moreover, if \eqref{OCP:minT} had a solution for $U = U_\textnormal{crit}$, then Lemma~\ref{lem:U_minus_eps} would provide a contradiction with respect to the definition of $U_\textnormal{crit}$.
\end{proof}


\section{Cases with symmetry}\label{sec:ProofSym}

In this section, we deal with the two cases when the material satisfies additional symmetry without being a sphere (in which case the analysis becomes trivial). They correspond to the cases $\gamma_1 = \gamma_2 < \gamma_3$ and $\gamma_1 < \gamma_2 = \gamma_3$.

\subsection{Proof of Theorem~\ref{theo:gamma1=gamma2} (case \texorpdfstring{$\gamma_1=\gamma_2$}{gamma1=gamma2})}





From Theorem \ref{th:ex_min}, we have to investigate the existence of an admissible trajectory for this problem, in other words, the existence of a control $u\in \mathcal{O}_U$ and a time $T>0$ such that $m_u(T)=-e_1$. This property is known to be true as soon as $U $ is large enough according to \cite{alouges2009magnetization}. But it has to be proved for smaller $U $.



Let us assume that $\gamma_1=\gamma_2$. 
We will prove that, in that case, $\inf_{T>0}\Lambda(T)=0$, (with $\Lambda(T)$ defined by Equation \eqref{ob:aux}) which will prove that Problem~\eqref{OCP:minT} has a solution whatever the value of $U >0$. For $\varepsilon>0$, let us consider a particular trajectory $m_\varepsilon$ of the form $m_\varepsilon=(\cos (\varepsilon t),\sin(\varepsilon t),0)$. Then, by defining 
$$
\mathcal F_\varepsilon: = \left(\dot e_\varepsilon\cdot (Dm_\varepsilon\wedge m_\varepsilon)+\frac{|\dot m_\varepsilon|}{1+\alpha^2}\right)^2+\left(\dot e_\varepsilon\cdot Dm_\varepsilon+\frac{\alpha|\dot m_\varepsilon|}{1+\alpha^2}\right)^2
$$ 
with $\dot e_\varepsilon=\dot m_\varepsilon/|\dot m_\varepsilon|$, a straightforward computation yields
$$
\mathcal F_\varepsilon=\frac{\varepsilon^2}{1+\alpha^2}\leq \varepsilon^2.
$$
We infer that $\inf_{T>0}\Lambda(T)\leq \mathcal F_\varepsilon\leq \varepsilon^2$ whence the conclusion, since $\varepsilon$ is arbitrary.

Let us now prove the last point of this result, assuming that from now on $\gamma_1<\gamma_3$. 
Assume that $m_3 (t) = 0$ for all $t \geq 0$. Then, $D m = \gamma_1 m$. By contradiction, if such an $m$ is an optimal trajectory, Proposition \ref{prop:1stOrderCond} is satisfied, and \eqref{eq:LL_ODE_bis} gives
\begin{equation*}
    \dot{m} = U  (\alpha^2 + 1)^{1/2} \frac{\varphi}{\abs{\varphi}}.
\end{equation*}
By taking the third coordinate, we get $\varphi_3 (t) = 0$ for all $t \geq 0$. Thus, we also get $D \varphi = \gamma_1 \varphi$ and \eqref{eq:complete_eq_phi} gives
\begin{equation*}
    \dot{\varphi} = - \gamma_1 m \wedge \varphi - D (\varphi \wedge m) - U  (\alpha^2 + 1)^{1/2} \abs{\varphi} m.
\end{equation*}
By taking again the third coordinate, we get
\begin{equation*}
    0 = - \gamma_1 (m \wedge \varphi) \cdot e_3 - D (\varphi \wedge m) \cdot e_3 = (\gamma_3 - \gamma_1) (m \wedge \varphi) \cdot e_3 = (\gamma_3 - \gamma_1) (e_3 \wedge m) \cdot \varphi.
\end{equation*}
Since $\gamma_3 > \gamma_1$, this proves that $(e_3 \wedge m) \cdot \varphi = 0$. However, at $t=0$, this means that
\begin{equation*}
    0 = (e_3 \wedge e_1) \varphi (0) = \varphi_2 (0).
\end{equation*}
Now $\varphi_1 (0) = \varphi (0) \cdot m (0) = 0$, and we obtained $\varphi (0) = 0$: this is  a contradiction with \eqref{cond:phi:timeTtime0}.



\subsection{Proof of Theorem~\ref{theo:gamma2=gamma3} (case \texorpdfstring{$\gamma_2=\gamma_3$}{gamma2=gamma3})}
For this case, we first show that the (PMP) conditions are also sufficient conditions for optimal trajectories :

\begin{lemma} \label{lem:link_pmp_opt_traj_23}
    Let $U > U_\textnormal{crit}$. Then any trajectory $m$ satisfying the (PMP) (\eqref{eq:adj_eq}-\eqref{HamiltonCst} with $p^0 = -1$) is an optimal trajectory.
\end{lemma}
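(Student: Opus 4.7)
The approach is to leverage the rotational symmetry around the $e_1$-axis that is available precisely when $\gamma_2 = \gamma_3$. Since the diagonal matrix $D$ commutes with every rotation $R_\psi$ around $e_1$, and rotations preserve the cross product, every term appearing in the PMP system is $R_\psi$-equivariant: the autonomous system \eqref{eq:LL_ODE_bis}--\eqref{eq:complete_eq_phi}, the feedback formula \eqref{expr:uopt1809}, the endpoints $\pm e_1$, and the terminal constraints \eqref{eq:orth:pT} and \eqref{cond:phi:timeTtime0} are all invariant under the $R_\psi$-action. Consequently, if $(m, \varphi)$ is a PMP trajectory then so is $(R_\psi m, R_\psi \varphi)$, with the same duration.

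I would first anchor one concrete PMP trajectory by invoking existence of an optimal pair $(m^\star, u^\star)$ of duration $\mathcal{T}_U < \infty$, which is granted for $U > U_\textnormal{crit}$ by Theorem~\ref{th:ex_U_crit} combined with Theorem~\ref{th:ex_min} (with the degenerate case $\gamma_1 = \gamma_2$ covered by Theorem~\ref{theo:gamma1=gamma2}). By Proposition~\ref{prop:1stOrderCond}, this optimal pair automatically satisfies the PMP, so at least one PMP trajectory of duration $\mathcal{T}_U$ is available for comparison.

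Next, I would show every PMP trajectory has duration exactly $\mathcal{T}_U$ by a uniqueness-modulo-$R_\psi$ argument. Let $m$ be an arbitrary PMP trajectory of duration $T^*$. By \eqref{eq:orth:pT} the terminal vector $\varphi(T^*)$ lies in $e_1^\perp$, and by \eqref{cond:phi:timeTtime0} has fixed norm $\tfrac{1}{U\sqrt{1+\alpha^2}}$; hence there exists $\psi \in \R$ with $R_\psi \varphi(T^*) = \tfrac{1}{U\sqrt{1+\alpha^2}} e_2$. Applying the analogous normalization to $m^\star$, both normalized trajectories satisfy the same autonomous Cauchy problem \eqref{eq:LL_ODE_bis}--\eqref{eq:complete_eq_phi} with identical terminal data $(-e_1, \tfrac{1}{U\sqrt{1+\alpha^2}} e_2)$ at their respective terminal times. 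Since $\varphi$ never vanishes by Proposition~\ref{prop:1stOrderCond}, the vector field is locally Lipschitz along these trajectories, and Cauchy--Lipschitz forces the two normalized curves to coincide when integrated backward. Both must then reach $m = e_1$ after the same backward duration, so $T^* = \mathcal{T}_U$. Admissibility of $m$ is immediate from \eqref{expr:uopt1809} (it gives $|u| \equiv U$) and from $m(0) = e_1$, $m(T^*) = -e_1$; since its duration equals the infimum $\mathcal{T}_U$, the trajectory $m$ is optimal.

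The main technical obstacle will be the Cauchy--Lipschitz step, which requires $|\varphi|$ to stay bounded away from $0$ on the whole backward interval so that the right-hand sides of \eqref{eq:LL_ODE_bis}--\eqref{eq:complete_eq_phi} remain Lipschitz. This should follow from the non-vanishing of $\varphi$ guaranteed by Proposition~\ref{prop:1stOrderCond}, its continuity on the closed interval $[0, T^*]$, and the pointwise identity \eqref{eq:expr_max_ham}, which provides quantitative control of $|\varphi|$. A subsidiary point---ensuring that the backward flow first returns to $m = e_1$ at the common time $T^* = \mathcal{T}_U$ rather than at some earlier backward time---is automatic once the two normalized trajectories are shown to be literally the same parametrized curve.
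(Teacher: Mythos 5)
Your proof is correct and follows essentially the same strategy as the paper's: exploit the $e_1$-axial rotational symmetry available when $\gamma_2=\gamma_3$ (so $DR_\psi = R_\psi D$, and rotations preserve cross products), then identify the given PMP trajectory with a known optimal one via Cauchy--Lipschitz uniqueness, using the non-vanishing of $\varphi$ (from \eqref{eq:expr_max_ham}) to guarantee the vector field in \eqref{eq:LL_ODE_bis}--\eqref{eq:complete_eq_phi} is Lipschitz. The only genuine difference is the anchoring point: you normalize both trajectories at their respective \emph{terminal} times to a common normal form $\left(-e_1,\ \tfrac{1}{U\sqrt{1+\alpha^2}}e_2\right)$ and integrate backward, whereas the paper rotates so that $\varphi(0)$ matches $\varphi_*(0)$ and integrates forward from the shared initial datum $m(0)=m_*(0)=e_1$. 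The paper's forward choice is marginally cleaner because the initial state is already common to all trajectories, so there is no need for your subsidiary argument about when the backward flow first returns to $e_1$; but your backward version resolves that point correctly by noting that, once the normalized curves coincide as parametrized curves, an intermediate passage through $e_1$ of the shorter one would contradict either optimality of $m_*$ or the fact that $T^*$ is the first time the PMP trajectory hits $-e_1$. Both versions are sound, and the computational content (equivariance of \eqref{eq:LL_ODE_bis}--\eqref{eq:complete_eq_phi}, the norm/orthogonality data from \eqref{eq:orth:pT} and \eqref{cond:phi:timeTtime0}, the Cauchy--Lipschitz step) is identical.
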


\begin{proof}
    Let $m_*$ be an optimal trajectory, and $p_*$ the associated adjoint state. By definition, they satisfy the (PMP) conditions. Now, let $(m, p)$ be a trajectory and its adjoint state satisfying the (PMP) conditions. Let also $\varphi = p - (p \cdot m) m$ and $\varphi_* = p_* - (p_* \cdot m_*) m_*$. In particular, we know that $\varphi (0)$ satisfies \eqref{cond:phi:timeTtime0} and $\varphi (0) \perp m (0) = e_1$, and similarly for $\varphi_*$ with respect to $m_*$. Thus, there exists $\theta \in [0, 2 \pi]$ such that $R_\theta \varphi (0) = \varphi_* (0)$ where $R_\theta$ is the rotation along $e_1$ of angle $\theta$:
    \begin{equation*}
        R_\theta =
            \begin{pmatrix}
                1 & 0 & 0 \\
                0 & \cos \theta & - \sin \theta \\
                0 & \sin \theta & \cos \theta
            \end{pmatrix}.
    \end{equation*}
    On the other hand, since $\gamma_2 = \gamma_3$, we have $D R_\theta = R_\theta D$ for all $\theta$, but also $R_\theta f \wedge R_\theta g = R_\theta (f \wedge g)$ for any $f, g \in \mathbb{R}^3$. Last, $R_\theta m_* (0) = R_\theta e_1 = e_1$. Thus, $(R_\theta m, R_\theta \varphi)$ satisfies the same system of ODEs as $(m_*, \varphi_*)$ (i.e. \eqref{LL:ODE}-\eqref{eq:adj_eq} with $u (\cdot)$ or $u_* (\cdot)$ satisfying \eqref{expr:uopt1809}) with the same initial data. By the Cauchy-Lipschtiz theorem and using the fact that both $\varphi$ and $\varphi_*$ never vanish thanks to \eqref{eq:expr_max_ham}, we obtain $(R_\theta m, R_\theta \varphi) = (m_*, \varphi_*)$, i.e. $(m, \varphi) = (R_{- \theta} m_*, R_{- \theta} \varphi_*)$, and thus the conclusion.
\end{proof}

The following two results exploit in a precise way the (necessary and sufficient) optimality conditions.

\begin{lemma} \label{lem:adjoint_dir}
    Let $U > 0$ and $(m, p)$ satisfy the (PMP) conditions (\eqref{eq:adj_eq}-\eqref{HamiltonCst} with $p^0 = -1$) and $\varphi = p - (p \cdot m) m$. Then, for every $t \geq 0$, $\varphi (t) \cdot (e_1 \wedge m (t)) = 0$.
\end{lemma}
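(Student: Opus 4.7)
The plan is to exploit the $SO(2)$ rotational symmetry around the $e_1$-axis that is present precisely when $\gamma_2 = \gamma_3$: in this case $D$ commutes with every rotation $R_\theta$ fixing $e_1$, and since the cross product is rotation-equivariant, the whole flow on $(m,\varphi)$ is invariant under the simultaneous rotation $(m,\varphi) \mapsto (R_\theta m, R_\theta \varphi)$. The corresponding Noether charge is exactly $(e_1 \wedge m) \cdot p$, and since $m \cdot (e_1 \wedge m) = 0$, this equals $\psi(t) := \varphi(t) \cdot (e_1 \wedge m(t))$. The plan is therefore to set $\psi(t) := \varphi(t) \cdot (e_1 \wedge m(t))$, observe that $\psi(0) = 0$ because $m(0) = e_1$ makes $e_1 \wedge m(0) = 0$, and then show that $\dot \psi = c(t) \psi$ along the trajectory for some locally bounded $c$; the Cauchy--Lipschitz theorem (applied to the scalar linear ODE) then forces $\psi \equiv 0$.

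To make this effective, I would first rewrite $D$ using $\gamma_2 = \gamma_3$ in the compact form $Dv = \gamma_2 v + (\gamma_1 - \gamma_2) v_1 e_1$ for every $v \in \R^3$. This produces the simplifications
\begin{equation*}
    m \wedge Dm = -(\gamma_1 - \gamma_2) m_1 (e_1 \wedge m), \qquad m \wedge (m \wedge Dm) = -(\gamma_1 - \gamma_2) m_1 e_1 + (\gamma_1 - \gamma_2) m_1^2 m,
\end{equation*}
and analogous reductions for the terms $Dm \wedge \varphi$, $D(\varphi \wedge m)$ and $m \cdot (Dm \wedge \varphi)$ that appear in \eqref{eq:complete_eq_phi}. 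Plugging these into \eqref{eq:LL_ODE_bis} and \eqref{eq:complete_eq_phi}, I would then compute
\begin{equation*}
    \dot \psi = \dot \varphi \cdot (e_1 \wedge m) + \varphi \cdot (e_1 \wedge \dot m).
\end{equation*}

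The cancellations are driven by four elementary identities that one uses repeatedly: $\varphi \cdot m = 0$ (definition of $\varphi$), $m \cdot (e_1 \wedge m) = 0$, $\varphi \cdot (e_1 \wedge \varphi) = 0$, and the Lagrange formula $(e_1 \wedge \varphi) \cdot (e_1 \wedge m) = (\varphi \cdot m) - \varphi_1 m_1 = -\varphi_1 m_1$. In particular, the entire contribution of $U\sqrt{1+\alpha^2}\,\varphi/|\varphi|$ in $\dot m$ drops out of $\varphi \cdot (e_1 \wedge \dot m)$ (perpendicularity of $\varphi$ and $e_1 \wedge \varphi$), and the contribution $-U\sqrt{1+\alpha^2}|\varphi| m$ in $\dot \varphi$ drops out of $\dot \varphi \cdot (e_1 \wedge m)$ (perpendicularity of $m$ and $e_1 \wedge m$). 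After these cancellations, the remaining terms pair as $\pm (\gamma_1 - \gamma_2) m_1^2 \varphi_1$ and $\pm \alpha (\gamma_1 - \gamma_2) m_1^2 \psi$, leaving at worst a linear expression in $\psi$ (with coefficient a bounded multiple of $m_1^2$). This yields an ODE of the form $\dot \psi = c(t) \psi$ with $c \in L^\infty(0,T)$, and combined with $\psi(0) = 0$ the claim follows.

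The main obstacle is purely computational: one must carefully expand \eqref{eq:complete_eq_phi} and \eqref{eq:LL_ODE_bis}, take scalar products with $e_1 \wedge m$ (respectively $e_1$-wedge the $\dot m$ equation and dot with $\varphi$), and then verify the term-by-term cancellations. There is no subtle analytic input; the only place the hypothesis $\gamma_2 = \gamma_3$ enters is through the commutation $DR_\theta = R_\theta D$, which manifests concretely in the identity $Dv = \gamma_2 v + (\gamma_1 - \gamma_2) v_1 e_1$. If $\gamma_2 \ne \gamma_3$, the terms produced by $D$ acting on $e_2, e_3$ components would not pair up, and the conservation law would fail — which is precisely why this lemma is specific to the present symmetric case.
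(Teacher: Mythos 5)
Your plan is correct and follows essentially the same route as the paper: introduce $\psi := \varphi \cdot (e_1 \wedge m)$, differentiate along \eqref{eq:LL_ODE_bis} and \eqref{eq:complete_eq_phi}, use $\gamma_2 = \gamma_3$ (your decomposition $Dv = \gamma_2 v + (\gamma_1 - \gamma_2) v_1 e_1$ is an equivalent way to write the paper's observation that $Df = \gamma_2 f$ for $f \perp e_1$ together with the symmetry of $D$) to make the $D$-contributions collapse, and conclude from $\psi(0) = 0$. Your hedge that you might only land on $\dot\psi = c(t)\psi$ turns out to be unnecessary --- carrying the computation through, as the paper does, gives $\dot\psi \equiv 0$ exactly, which is precisely the conservation that your Noether framing predicts --- but the hedge costs nothing since $\psi(0) = 0$ closes the argument either way.
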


\begin{proof}
    We know that $\varphi$ satisfies \eqref{eq:complete_eq_phi} and $m$ satisfies \eqref{eq:LL_ODE_bis} with $u$ given by \eqref{expr:uopt1809}. Moreover,
    \begin{equation*}
        \frac{\diff}{\diff t} (\varphi \cdot (e_1 \wedge m)) = \dot{\varphi} \cdot (e_1 \wedge m) + \varphi \cdot (e_1 \wedge \dot{m}).
    \end{equation*}
    Using the facts that $u \perp m$ and $m \perp (e_1 \wedge m)$, there holds
    \begin{multline*}
        \dot{\varphi} \cdot (e_1 \wedge m) = \alpha D \varphi \cdot (e_1 \wedge m) - \alpha (Dm \cdot m) \varphi \cdot (e_1 \wedge m) - (Dm \wedge \varphi) \cdot (e_1 \wedge m) - D (\varphi \wedge m) \cdot (e_1 \wedge m) \\ + \frac{U}{\sqrt{1 + \alpha^2}} (\varphi \wedge (\frac{\varphi}{|\varphi|} \wedge m)) \cdot (e_1 \wedge m),
    \end{multline*}
    \begin{equation*}
        \varphi \cdot (e_1 \wedge \dot{m}) = - \alpha \varphi \cdot (e_1 \wedge Dm) + \alpha (Dm \cdot m) \varphi \cdot (e_1 \wedge m) + \varphi \cdot (e_1 \wedge (m \wedge Dm)) + U \sqrt{1 + \alpha^2} \varphi \cdot (e_1 \wedge \frac{\varphi}{\abs{\varphi}}).
    \end{equation*}
    First, we point out that $\varphi \perp m$ so that $\varphi \wedge (\varphi \wedge m) = - \abs{\varphi}^2 m$, and thus $(\varphi \wedge (\varphi \wedge m)) \cdot (e_1 \wedge m) = 0$. Similarly, $U \sqrt{1 + \alpha^2}  \varphi \cdot (e_1 \wedge \frac{\varphi}{\abs{\varphi}}) = 0$.
    Then, using the triple product formula, we get
    \begin{gather*}
        \varphi \cdot (e_1 \wedge Dm) = - Dm \cdot (e_1 \wedge \varphi)
\\
        (Dm \wedge \varphi) \cdot (e_1 \wedge m) = (\varphi \wedge (e_1 \wedge m)) \cdot Dm = - (\varphi \cdot e_1) (m \cdot Dm),
\\
        (m \wedge Dm) \cdot (e_1 \wedge \varphi) = ((e_1 \wedge \varphi) \wedge m) \cdot Dm = (m \cdot e_1) (\varphi \cdot Dm).
    \end{gather*}
    Moreover, since $\gamma_2 = \gamma_3$, we know that, for any vector $f \in \mathbb{R}^3$ such that $f \cdot e_1 = 0$, $D f = \gamma_2 f = \gamma_3 f$. With the fact that $D$ is symmetric, this leads to
    \begin{gather*}
        D \varphi \cdot (e_1 \wedge m) = \varphi \cdot D (e_1 \wedge m) = \gamma_2 \varphi \cdot (e_1 \wedge m),
\\
       Dm \cdot (e_1 \wedge \varphi) = m \cdot D(e_1 \wedge \varphi) = \gamma_2 m \cdot (e_1 \wedge \varphi) = - \gamma_2 \varphi \cdot (e_1 \wedge m), \\
        D (\varphi \wedge m) \cdot (e_1 \wedge m) = (\varphi \wedge m) \cdot D (e_1 \wedge m) = \gamma_2 (\varphi \wedge m) \cdot (e_1 \wedge m).
    \end{gather*}
    Last, using again the double product, we have
    \begin{equation*}
        \varphi \cdot (e_1 \wedge (m \wedge Dm)) = (\varphi \cdot m) (e_1 \cdot Dm) - (\varphi \cdot Dm) (e_1 \cdot m) = - (\varphi \cdot Dm) (e_1 \cdot m).
    \end{equation*}
    But one has then
    \begin{align*}
        (\varphi \cdot e_1) (m \cdot Dm) - (\varphi \cdot Dm) (e_1 \cdot m) &= e_1 \cdot \Bigl[ (Dm \cdot m) \varphi - (Dm \cdot \varphi) m \Bigr] = -e_1 \cdot (Dm \wedge (m \wedge \varphi)) \\
            &= -Dm \cdot ((m \wedge \varphi) \wedge e_1) = -m \cdot D ((m \wedge \varphi) \wedge e_1) \\
            &= -\gamma_2 m \cdot ((m \wedge \varphi) \wedge e_1) =  \gamma_2 (\varphi \wedge m) \cdot (e_1 \wedge m).
    \end{align*}
    This means
    \begin{equation*}
        \varphi \cdot (e_1 \wedge (m \wedge Dm)) - (Dm \wedge \varphi) \cdot (e_1 \wedge m) - D (\varphi \wedge m) \cdot (e_1 \wedge m) = 0.
    \end{equation*}
    Hence,
    \begin{equation*}
        \frac{\diff}{\diff t} (\varphi \cdot (e_1 \wedge m)) = 0.
    \end{equation*}
    The conclusion comes by integration, noticing furthermore that $m(0) = e_1$ and thus $e_1 \wedge m(0) = 0$.
\end{proof}

\begin{lemma} \label{lem:m1_theta_odes}
    Let $U > 0$ and $(m, p)$ satisfy the (PMP) conditions (\eqref{eq:adj_eq}-\eqref{HamiltonCst} with $p^0 = -1$). Denote $m=(m_1,m_2,m_3)$ the coordinates of $m$, and define $t_0 \coloneqq \inf \{ t \geq 0 \, | \, m (t) = \pm e_1 \} > 0$ (possibly $+ \infty$) and $\theta \in [0, \pi]$ such that $m_1 = \cos \theta$ on $[0, t_0)$. Then $m_1$ and $\theta$ satisfy on $[0, t_0)$
    \begin{equation*}
        \dot{m}_1 = \alpha (\gamma_2 - \gamma_1) (1 - m_1^2) m_1 - U \sqrt{1 + \alpha^2} \sqrt{1 - m_1^2},
    \end{equation*}
    \begin{equation} \label{eq:theta_ode}
        \dot{\theta} = - \alpha (\gamma_2 - \gamma_1) \sin{\theta} \, \cos{\theta} + U \sqrt{1 + \alpha^2}.
    \end{equation}
    Last, $t_0 = \infty$ if $U \leq U_\textnormal{crit}$ and $t_0 = \mathcal{T}_U$ if $U > U_\textnormal{crit}$
\end{lemma}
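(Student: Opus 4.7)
My plan has three main steps. First, I would compute $\dot m_1$ directly from the dynamical equation \eqref{eq:LL_ODE_bis}. Since $\gamma_2 = \gamma_3$, the matrix $D$ fixes $e_1$ and acts as $\gamma_2 \operatorname{Id}$ on $e_1^\perp$, so a short computation yields $(m \wedge Dm) \cdot e_1 = 0$ and, using $m_2^2 + m_3^2 = 1 - m_1^2$, $(m \wedge (m \wedge Dm)) \cdot e_1 = (\gamma_2 - \gamma_1) m_1 (1 - m_1^2)$. Projecting onto $e_1$ therefore produces
\begin{equation*}
\dot m_1 = \alpha (\gamma_2 - \gamma_1) m_1 (1 - m_1^2) + U \sqrt{1 + \alpha^2}\, \frac{\varphi_1}{\abs{\varphi}}.
\end{equation*}

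The crux is then to identify $\varphi_1/\abs{\varphi}$ with $-\sqrt{1-m_1^2}$, which is the main obstacle. From Lemma \ref{lem:adjoint_dir}, $\varphi \cdot (e_1 \wedge m) = m_2 \varphi_3 - m_3 \varphi_2 = 0$, so $(\varphi_2, \varphi_3)$ is collinear to $(m_2, m_3)$; combined with $\varphi \cdot m = 0$, an elementary computation gives $\abs{\varphi}^2 = \varphi_1^2/(1 - m_1^2)$ whenever $m \neq \pm e_1$, so $\varphi_1/\abs{\varphi} = \pm \sqrt{1 - m_1^2}$. To pin down the sign, I would do a local analysis at $t = 0$: evaluating \eqref{eq:complete_eq_phi} at $t = 0$ with $m(0) = e_1$, $Dm(0) = \gamma_1 e_1$ and $\varphi(0) \perp e_1$, every term has vanishing first coordinate except the last, yielding $\dot \varphi_1 (0) = - U \sqrt{1 + \alpha^2}\, \abs{\varphi(0)} < 0$. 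Since $\varphi$ never vanishes (by \eqref{eq:expr_max_ham}) and $m_1^2 < 1$ on $(0, t_0)$, the relation $\abs{\varphi_1} = \abs{\varphi} \sqrt{1 - m_1^2}$ shows that $\varphi_1$ does not vanish on $(0, t_0)$; by continuity, $\varphi_1 < 0$ on $(0, t_0)$. Substituting $\varphi_1/\abs{\varphi} = -\sqrt{1-m_1^2}$ gives the claimed ODE for $m_1$, and the one for $\theta$ follows from $m_1 = \cos \theta$, $\dot m_1 = -\sin \theta\, \dot \theta$, $\sqrt{1 - m_1^2} = \sin \theta$ (valid since $\theta \in (0, \pi)$ on $(0, t_0)$), upon dividing by $-\sin \theta$.

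Finally, to analyze $t_0$, I would exploit that the $m_1$-ODE is autonomous and one-dimensional. Its equilibria in $(-1, 1)$ are the solutions of $\alpha (\gamma_2 - \gamma_1) m_1 \sqrt{1 - m_1^2} = U \sqrt{1 + \alpha^2}$; the left-hand side attains its maximum $\alpha (\gamma_2 - \gamma_1)/2 = U_\textnormal{crit} \sqrt{1 + \alpha^2}$ at $m_1 = 1/\sqrt{2}$. For $U \leq U_\textnormal{crit}$, an interior equilibrium exists and attracts the solution starting from $m_1 (0) = 1$, so $m_1 (t)$ never reaches $-1$ nor returns to $+1$, whence $t_0 = \infty$. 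For $U > U_\textnormal{crit}$, one has $\dot m_1 < 0$ strictly on $(-1, 1)$ and $1/\abs{\dot m_1}$ is integrable near $\pm 1$ (the square-root singularity is harmless), so $m_1$ reaches $-1$ in finite time; by Lemma \ref{lem:link_pmp_opt_traj_23} the trajectory is optimal, and minimality of $\mathcal{T}_U$ then forces $t_0 = \mathcal{T}_U$.
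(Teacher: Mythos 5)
Your overall strategy is the same as the paper's: use Lemma~\ref{lem:adjoint_dir} to pin down the direction of $\varphi$, reduce to the scalar $m_1$-ODE, then pass to $\theta$. Your computations are correct (including recognizing that \eqref{eq:LL_ODE_bis} should carry a factor $\alpha$ in front of $m \wedge (m \wedge Dm)$), and the identity $\abs{\varphi}^2 (1 - m_1^2) = \varphi_1^2$ is exactly what the paper encodes via $\varphi = \lambda (e_1 - m_1 m)$.

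Where you diverge is the sign determination. The paper writes $\varphi = \lambda(e_1 - m_1 m)$, observes that $\lambda$ keeps a constant sign, carries the unknown $\operatorname{sign}(\lambda)$ through to $\dot\theta(0) = -\operatorname{sign}(\lambda) U\sqrt{1+\alpha^2}$, and then invokes $\theta \geq 0$ (after first showing $m_1^2 < 1$ near $0$ by examining $\dot{\tilde m}(0)$). You instead evaluate \eqref{eq:complete_eq_phi} at $t=0$ directly, noting that all terms but the last have vanishing first coordinate, to get $\dot\varphi_1(0) = -U\sqrt{1+\alpha^2}\,\abs{\varphi(0)} < 0$, and propagate the sign by continuity using the non-vanishing of $\varphi$. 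This is a shorter and arguably cleaner way to fix the sign, and it also establishes $t_0 > 0$ for free (since $\abs{\varphi_1} > 0$ forces $m_1^2 < 1$).

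One presentational caveat on the last step: your phase-portrait analysis identifies the threshold $\alpha(\gamma_2-\gamma_1)/(2\sqrt{1+\alpha^2})$ for the existence of an interior equilibrium of the $m_1$-ODE, but you label it $U_\textnormal{crit}$ as if the identity $U_\textnormal{crit} = \alpha(\gamma_2-\gamma_1)/(2\sqrt{1+\alpha^2})$ were already known. At this point in the paper, only the abstract existence of $U_\textnormal{crit}$ (Theorem~\ref{th:ex_U_crit}) is available; the formula is established in Theorem~\ref{theo:gamma2=gamma3}, which relies on this very lemma. The paper avoids this by phrasing the $t_0$ conclusion as ``$\theta$ does not come back to $0$'' and deferring the threshold computation. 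Your argument is not circular in substance --- the phase portrait plus Lemma~\ref{lem:link_pmp_opt_traj_23} actually \emph{derives} the formula --- but you should state it as determining the threshold, then identifying it with $U_\textnormal{crit}$ afterwards, rather than assuming the identification from the start.
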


\begin{proof}
    Let $p$ its adjoint state and $\varphi = p - (p \cdot m) m$. Then $\varphi \cdot (e_1 \wedge m) = 0$ from Lemma \ref{lem:adjoint_dir}, which means that $\varphi$ is orthogonal to both $m$ and $e_1 \wedge m$ for all times in $[0, \mathcal{T}_U]$. Moreover, as soon as $m(t) \neq \pm e_1$ (i.e. as soon as $t \in (0, \mathcal{T}_U)$), $(m (t), e_1 \wedge m(t), m(t) \wedge (e_1 \wedge m(t)))$ is an orthogonal basis of $\mathbb{R}^3$. Therefore, $\varphi (t)$ is colinear to $m(t) \wedge (e_1 \wedge m(t))$, i.e. there is $\lambda \in \mathscr C((0,\mathcal{T}_U),\mathbb R)$ such that
    \begin{equation*}
        \varphi = \lambda m \wedge (e_1 \wedge m) = \lambda \Bigl[ e_1 - m_1 m \Bigr].
    \end{equation*}
    Moreover, from \eqref{eq:H_const}, we know that $\varphi$ does not vanish, thus neither does $\lambda$, which has a constant sign. Then, we also have
    \begin{equation*}
        e_1 \cdot \varphi (t) = \lambda (t) (1 - m_1^2)
  \quad \text{and}\quad 
        \abs{\varphi (t)} = \abs{\lambda (t)} \sqrt{1 - m_1^2},
    \end{equation*}
    which leads to
    \begin{equation*}
        e_1 \cdot \frac{\varphi (t)}{\abs{\varphi (t)}} = \operatorname{sign} (\lambda) \sqrt{1 - m_1^2},
    \end{equation*}
    with $\operatorname{sign} (\lambda) = \pm 1$ constant in time. 
    We also have
    \begin{equation*}
        (Dm \cdot m) = \gamma_1 m_1^2 + \gamma_2 (m_2^2 + m_3^2) = \gamma_2 - (\gamma_2 - \gamma_1) m_1^2,
    \end{equation*}
    \begin{equation*}
        e_1 \cdot (m \wedge Dm) = Dm \cdot (e_1 \wedge m) = m \cdot D (e_1 \wedge m) = m \cdot \gamma_2 (e_1 \wedge m) = 0.
    \end{equation*}
    Therefore, the evolution equation on $m_1$ is
    \begin{equation} \label{eq:dot_m1_interm}
        \dot{m}_1 = \alpha (\gamma_2 - \gamma_1) (1 - m_1^2) \, m_1 + \operatorname{sign} (\lambda) U \sqrt{1 + \alpha^2} \sqrt{1 - m_1^2}.
    \end{equation}
    On the other hand, we know that $\tilde{m} = (m_2, m_3)$ satisfies at $t = 0$:
    \begin{equation*}
        \dot{\tilde m} (0) = U \sqrt{1 + \alpha^2} \frac{\tilde{\varphi} (0)}{\abs{\varphi (0)}},
    \end{equation*}
    with $\tilde{\varphi} (0) \neq 0$ since $\varphi (0) \cdot e_1 = 0$ and $\abs{\varphi (0)} = \frac{1}{U \sqrt{1 + \alpha^2}} > 0$. Since $\tilde m (0) = (0, 0)$, this means that $\abs{\tilde m}$ is not vanishing on $(0, \varepsilon]$ for some $\varepsilon > 0$ small enough. Since $\abs{m}^2 = 1$, this necessarily means that $m_1^2 < 1$ on $(0, \varepsilon]$.
    Now, we can introduce $\theta (t)$ the first angle of the spherical coordinate such that $m_1 = \cos \theta$, and we can assume that $\theta (0) = 0$ and $\theta (t) > 0$ on $(0, \varepsilon]$. The angle $\theta (t)$ is then well defined on $[0, t_0)$ where $t_0 = \min \{ t > 0 \, | \, m(t) = \pm e_1 \}$ and $\theta (t) \in [0, \pi]$. Moreover, since $m_1$ is $\mathscr{C}^1$ (due to \eqref{eq:dot_m1_interm}, for example), $\theta$ is $\mathscr{C}^1$ on $(0, t_0)$ as well.
    %
    Then, on this interval, we can replace $m_1$ in \eqref{eq:dot_m1_interm} by its expression in terms of $\theta$, which leads to
    \begin{equation*}
        - \dot{\theta} \sin{\theta} = \alpha (\gamma_2 - \gamma_1) \sin^2 \theta \, \cos{\theta} + \operatorname{sign} (\lambda) U \sqrt{1 + \alpha^2} \sin{\theta},
    \end{equation*}
    hence
    \begin{equation*}
        \dot{\theta} = - \alpha (\gamma_2 - \gamma_1) \sin{\theta} \cos{\theta} - \operatorname{sign} (\lambda) U \sqrt{1 + \alpha^2}.
    \end{equation*}
    From this, we also see that $\theta$ is $\mathscr{C}^1$ at $t = 0$ with $\dot{\theta} (0) = - \operatorname{sign} (\lambda) U \sqrt{1 + \alpha^2}$.
    As $\theta \geq 0$ on $[0, t_0]$, we can easily see that $\operatorname{sign} (\lambda) = -1$ (otherwise we would have $\dot{\theta} (0) < 0$). This gives the expected ODEs on $m_1$ and $\theta$, but on $[0, t_0)$ only.
    
    To conclude, we shall prove that $t_0 = \infty$ if $U \leq U_\textnormal{crit}$ or $t_0 = \mathcal{T}_U$ if $U > U_\textnormal{crit}$, which is equivalent to prove that $m$ does not reach $e_1$ again (up to reaching $-e_1$ before), or equivalently that $\theta$ does not come back to $0$ before reaching $\pi$. This follows from the fact that $\theta$ satisfies an autonomous first-order ODE of the form $\dot \theta = f(\theta)$ with $f(0) > 0$.
 \end{proof}
We are now in position to prove Theorem~\ref{theo:gamma2=gamma3}.

\begin{proof}[Proof of Theorem~\ref{theo:gamma2=gamma3}]
  Let $U > 0$ and $(m, \varphi)$ satisfying the (PMP) conditions (\eqref{eq:adj_eq}-\eqref{HamiltonCst} with $p^0 = -1$). From Lemma \ref{lem:link_pmp_opt_traj_23} and Theorem \ref{th:ex_U_crit}, we have 2 cases:
    \begin{itemize}
        \item either $U \leq U_\textnormal{crit}$, and then no trajectory reaches $- e_1$ (and so in particular $m$).
        \item either $U > U_\textnormal{crit}$, and then $m$ reaches $- e_1$.
    \end{itemize}
    Therefore, we shall analyze only the case when $U > U_{\rm crit}$ and $m$ is able to reach $- e_1$ and. From Lemma \ref{lem:m1_theta_odes}, we can define $\theta (t) \in [0, \pi]$ such that $m_1 = \cos \theta$, and it satisfies \eqref{eq:theta_ode}. Since it is an autonomous ODE of the form $\dot \theta = f(\theta)$ with $f(0) > 0$, it is easy to prove that $\theta$ is able to reach $\pi$ (which means $m_1$ reaches $-1$ or also $m$ reaches $- e_1$) if and only if $f > 0$ on $[0, \pi]$, where $f (x) = - \alpha (\gamma_2 - \gamma_1) \sin{x} \cos{x} + U \sqrt{1 + \alpha^2}$.
    From this,
    \begin{align*}
        f (x) > 0 \quad \forall x \in [0, \pi] \quad &\Longleftrightarrow \quad \frac{1}{2} \sin{(2x)} < \frac{U \sqrt{1 + \alpha^2}}{\alpha (\gamma_2 - \gamma_1)} \quad \forall x \in [0, \pi] \\
        &\Longleftrightarrow \quad U > \frac{\alpha}{2 \sqrt{1 + \alpha^2}} (\gamma_2 - \gamma_1).
    \end{align*}
    This gives the desired expression of $U_{\textrm{crit}}$.
    
    Let us now compute the minimal time in that case.     From the ODE \eqref{eq:theta_ode} satisfied by $\theta$ for the optimal trajectory, we know that
    \begin{align*}
        \mathcal{T}_U &= \int_0^\pi \frac{\diff \theta}{- \alpha (\gamma_2 - \gamma_1) \sin{\theta} \, \cos{\theta} + U \sqrt{1 + \alpha^2}} \\
            &= \int_0^\pi \frac{\diff \theta}{- \frac{1}{2} \alpha (\gamma_2 - \gamma_1) \sin{2 \theta} + U \sqrt{1 + \alpha^2}} \\
            &= \int_0^{2 \pi} \frac{\diff x}{- \alpha (\gamma_2 - \gamma_1) \sin{x} + 2 U \sqrt{1 + \alpha^2}} \\
            &= \frac{1}{2 \sqrt{1 + \alpha^2}} \int_{- \pi}^{\pi} \frac{\diff x}{- U_\textnormal{crit} \sin{x} + U}.
    \end{align*}
    With the change of variable $y = \tan \frac{x}{2}$, so that $\diff x = \frac{2 \diff y}{1 + y^2}$, $\sin x = \frac{2 y}{1 + y^2}$, we get
    \begin{align*}
        \mathcal{T}_U &= \frac{1}{2 \sqrt{1 + \alpha^2}} \int_{-\infty}^{+ \infty} \frac{2 \diff y}{- 2 U_\textnormal{crit} y + U (1 + y^2)} \\
            &= \frac{1}{\sqrt{1 + \alpha^2}} \int_{-\infty}^{+ \infty} \frac{\diff y}{U \Bigl( y - \frac{U_\textnormal{crit}}{U} \Bigr)^2 + \frac{U^2 - U_\textnormal{crit}^2}{U}} \\
            &= \frac{1}{U \sqrt{1 + \alpha^2}} \int_{-\infty}^{+ \infty} \frac{\diff y}{y^2 + \frac{U^2 - U_\textnormal{crit}^2}{U^2}} \\
            &= \frac{1}{U \sqrt{1 + \alpha^2}} \sqrt{\frac{U^2 - U_\textnormal{crit}^2}{U^2}} \int_{-\infty}^{+ \infty} \frac{\diff z}{\frac{U^2 - U_\textnormal{crit}^2}{U^2} z^2 + \frac{U^2 - U_\textnormal{crit}^2}{U^2}},
    \end{align*}
    with the change of variable $y = \sqrt{\frac{U^2 - U_\textnormal{crit}^2}{U^2}} z$. Therefore,
\[
        \mathcal{T}_U = \frac{1}{U \sqrt{1 + \alpha^2}} \sqrt{\frac{U^2}{U^2 - U_\textnormal{crit}^2}} \int_{-\infty}^{+ \infty} \frac{\diff z}{z^2 + 1} = \frac{\pi}{\sqrt{1 + \alpha^2} \sqrt{U^2 - U_\textnormal{crit}^2}}. \qedhere
\]
\end{proof}

\section{Proof of Theorem~\ref{th:alm_planar}: on almost planar trajectories}\label{proof:theo8}

Let us first state a result based on tedious computations, whose detail is left to the reader.

\begin{lemma} \label{lem:zeta_ode}
    Let $U > U_\textnormal{crit}$, $m$ be an optimal trajectory and $p$ its adjoint state. 
    
    Then $\zeta \coloneqq p \wedge m = \varphi \wedge m$ satisfies
    \begin{equation*}
        \dot{\zeta} = \alpha \Bigl( D (m \wedge \zeta) \wedge m - (m \wedge \zeta) \wedge Dm \Bigr) + Dm \wedge \zeta - D \zeta \wedge m
    \end{equation*}
    
    Similarly, denote $Z \coloneqq \zeta/\abs{\zeta}$. At every point where $\zeta$ does not vanish, one has 
    \begin{equation} \label{eq:Z_ode}
        \dot{Z} = \mathbb{P}_{Z^\perp} \biggl[ \alpha \Bigl( D (m \wedge Z) \wedge m - (m \wedge Z) \wedge Dm \Bigr) + Dm \wedge Z - D Z \wedge m \biggr],
    \end{equation}
    where $\mathbb{P}_{Z^\perp}: x\mapsto x - (Z \cdot x) Z$ is the projection onto the orthogonal space to $Z$.
\end{lemma}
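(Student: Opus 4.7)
The plan is to prove this identity by direct computation. Apply the product rule $\dot\zeta = \dot p\wedge m + p\wedge\dot m$, then substitute the state equation \eqref{eq:LL_ODE_bis} for $\dot m$ and the adjoint equation \eqref{eq:adj_eq} for $\dot p$. I would then split the resulting expression into a part depending on the control $u$ (coming from $F_2$ and from the $u$-terms in $\dot m$) and a part depending only on $D$, $m$ and $p$ (coming from $F_1$ and the $D$-terms in $\dot m$), and treat them separately. Note that since $\zeta=p\wedge m=\varphi\wedge m$, all components of $p$ parallel to $m$ drop out automatically, which will be convenient for regrouping at the end.

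For the $u$-dependent contribution, the orthogonality $u\cdot m=0$ together with the double cross-product identity $a\wedge(b\wedge c)=(a\cdot c)b-(a\cdot b)c$ allow each of the four vector products to be rewritten. Using the explicit form \eqref{expr:uopt1809} of $u$ one verifies directly that
\[
\alpha u - m\wedge u = \frac{U(1+\alpha^2)^{1/2}}{\abs{\varphi}}\varphi, \qquad \alpha u\wedge m - u = \frac{U(1+\alpha^2)^{1/2}}{\abs{\varphi}}\zeta.
\]
Combined with $p\wedge\varphi=-(p\cdot m)\zeta$ (since $\varphi=p-(p\cdot m)m$), one finds that both $F_2(m,p,u)\wedge m$ and $p\wedge(\alpha u-m\wedge u)$ reduce to scalar multiples of $(p\cdot m)\zeta$ with opposite signs, hence cancel exactly. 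This matches the fact that the claimed right-hand side contains no trace of $u$.

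For the purely $D$-dependent contribution, I would expand every term appearing in $F_1(m,p)\wedge m$ and in $p\wedge(m\wedge Dm+\alpha\,m\wedge(m\wedge Dm))$ using again the double-cross-product identity and the simplification $m\wedge(m\wedge a)=(m\cdot a)m-a$ available since $\abs{m}=1$. Several scalar coefficients, in particular those of the form $\alpha(Dm\cdot m)\zeta$ coming from $F_1\wedge m$ and from $p\wedge(\alpha m\wedge(m\wedge Dm))$, will cancel between the two pieces; the surviving terms regroup, once one recognizes $m\wedge\zeta=\varphi$, into exactly $\alpha(D(m\wedge\zeta)\wedge m-(m\wedge\zeta)\wedge Dm)+Dm\wedge\zeta-D\zeta\wedge m$. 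The main obstacle at this stage is bookkeeping rather than cleverness: about a dozen vector products must be expanded, each via an elementary identity, but keeping track of the signs and of which scalar factor multiplies which vector is where the care is required.

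Finally, on the open set where $\zeta$ does not vanish, the equation for $Z=\zeta/\abs{\zeta}$ follows from the $\zeta$-equation by differentiating $\abs{\zeta}^2=\zeta\cdot\zeta$, which yields $\dot Z=\frac{1}{\abs{\zeta}}(\dot\zeta-(Z\cdot\dot\zeta)Z)=\frac{1}{\abs{\zeta}}\mathbb{P}_{Z^\perp}\dot\zeta$. Since the right-hand side of the $\zeta$-equation is linear in $\zeta$, dividing through by $\abs{\zeta}$ replaces every occurrence of $\zeta$ by $Z$, and one obtains \eqref{eq:Z_ode} without further computation.
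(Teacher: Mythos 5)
Your plan — apply the product rule to $\zeta=p\wedge m=\varphi\wedge m$, split the result into a $u$-dependent piece and a $D$-dependent piece, and handle each by elementary cross-product identities — is exactly the ``tedious computation'' the paper alludes to and leaves to the reader, so in that sense you are on the same track. The intermediate facts you do verify are correct: with $u\cdot m=0$ and \eqref{expr:uopt1809} one indeed gets $\alpha u-m\wedge u=\frac{U\sqrt{1+\alpha^2}}{|\varphi|}\varphi$ and $\alpha u\wedge m-u=\frac{U\sqrt{1+\alpha^2}}{|\varphi|}\zeta$, the identity $p\wedge\varphi=-(p\cdot m)\zeta$ holds, and with these the two $u$-contributions $F_2(m,p,u)\wedge m$ and $p\wedge(\alpha u-m\wedge u)$ are each $\pm(p\cdot m)\frac{U\sqrt{1+\alpha^2}}{|\varphi|}\zeta$ and cancel. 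The passage from the $\zeta$-equation to the $Z$-equation via $\dot Z=\frac{1}{|\zeta|}\mathbb{P}_{Z^\perp}\dot\zeta$ and linearity of the right-hand side is also correct.

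The gap is the step you do not carry out. You assert that the surviving $D$-terms ``regroup \ldots into exactly'' the displayed right-hand side, but if you actually perform the bookkeeping you will not land on the lemma as written: the term $Dm\wedge\zeta$ comes out with the opposite sign. Concretely, using \eqref{eq1155} for $\dot\varphi$ together with $\dot m$, the $D$-part of $\dot\varphi\wedge m+\varphi\wedge\dot m$ is
\begin{equation*}
\alpha\bigl(D\varphi\wedge m-\varphi\wedge Dm\bigr)-(Dm\cdot m)\varphi+(Dm\cdot\varphi)m-D\zeta\wedge m,
\end{equation*}
and since $Dm\wedge\zeta=(Dm\cdot m)\varphi-(Dm\cdot\varphi)m$, this is $\alpha(D\varphi\wedge m-\varphi\wedge Dm)-Dm\wedge\zeta-D\zeta\wedge m$, i.e.\ with $\zeta\wedge Dm$ where the lemma writes $Dm\wedge\zeta$. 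A quick sanity check confirms it: take $m=e_1$, $\varphi=p=e_2$, $\zeta=-e_3$; a direct computation gives $\dot\zeta=-\alpha(\gamma_2-\gamma_1)e_3+(\gamma_3-\gamma_1)e_2$, whereas the stated formula evaluates to $-\alpha(\gamma_2-\gamma_1)e_3+(\gamma_1+\gamma_3)e_2$. So what you would actually obtain by completing your computation is $\dot\zeta=\alpha\bigl(D(m\wedge\zeta)\wedge m-(m\wedge\zeta)\wedge Dm\bigr)+\zeta\wedge Dm-D\zeta\wedge m$; the discrepancy is a sign typo in the lemma statement itself, and it does not affect the only way the lemma is used later (an upper bound on $|\dot Z|$), but a correct proof must arrive at the corrected sign, not the one you quote. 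In short: right approach, right cancellations where you checked, but the one assertion you left unchecked is precisely the one where care with signs matters, and it does not come out as you (and the paper's displayed formula) claim.
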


The proof of the Theorem~\ref{th:alm_planar} relies on the following result, establishing the existence of a planar trajectory joining $e_1$ to $-e_1$, without any norm condition on the chosen control.

\begin{lemma}[Existence of 
planar trajectory]\label{lem:2151}
For any $\varepsilon >0$, there exists a trajectory of the form $m(t) = (m_1(t),m_2(t),0)$ defined on $[0,T_\varepsilon ]$ for some $T_\varepsilon >0$, joining the state $e_1$ to $-e_1$ and such that
\[
\mathcal F: = \left(\dot e\cdot (Dm\wedge m)+\frac{|\dot m|}{1+\alpha^2}\right)^2+\left(\dot e\cdot Dm+\frac{\alpha|\dot m|}{1+\alpha^2}\right)^2 \leq \frac{1}{4} (\gamma_2 - \gamma_1)^2 (1+\varepsilon).
\]
 for all $t\in [0,T_\varepsilon]$. Furthermore, $T_\varepsilon \lesssim 1/\varepsilon$.
\end{lemma}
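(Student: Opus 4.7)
The plan is to produce the desired trajectory explicitly via an angular parametrization in the $(e_1, e_2)$-plane. Set
\[
m(t) = (\cos\theta(t), \sin\theta(t), 0), \quad t \in [0, T_\varepsilon],
\]
with $\theta : [0, T_\varepsilon] \to [0, \pi]$ smooth, strictly increasing, and satisfying $\theta(0) = 0$ and $\theta(T_\varepsilon) = \pi$, so that $m(0) = e_1$ and $m(T_\varepsilon) = -e_1$ are automatic. By construction the trajectory is planar. The remaining task is to compute $\mathcal{F}$ along such a trajectory and to choose $\theta$ so that the desired bound holds while keeping $T_\varepsilon \lesssim 1/\varepsilon$.

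First, a direct computation gives $\dot m = \dot\theta(-\sin\theta, \cos\theta, 0)$, hence (assuming $\dot\theta > 0$) $|\dot m| = \dot\theta$ and $\dot e = (-\sin\theta, \cos\theta, 0)$. Using $Dm = (\gamma_1 \cos\theta, \gamma_2 \sin\theta, 0)$, one finds
\[
m \wedge Dm = (\gamma_2 - \gamma_1) \sin\theta \cos\theta \, e_3,
\]
so that $\dot e \cdot (Dm \wedge m) = 0$ because the third component of $\dot e$ vanishes, while $\dot e \cdot Dm = (\gamma_2 - \gamma_1) \sin\theta \cos\theta = \tfrac{\gamma_2 - \gamma_1}{2} \sin(2\theta)$. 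Substituting into the definition of $\mathcal{F}$ yields the clean expression
\[
\mathcal{F}(t) = \frac{\dot\theta(t)^2}{(1+\alpha^2)^2} + \left( \frac{\gamma_2 - \gamma_1}{2} \sin(2\theta(t)) + \frac{\alpha \dot\theta(t)}{1+\alpha^2} \right)^2.
\]

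Next, take $\dot\theta \equiv c > 0$ constant, so that $\theta(t) = ct$ and $T_\varepsilon = \pi/c$. Expanding the square and using $|\sin(2\theta)| \leq 1$, we obtain pointwise
\[
\mathcal{F}(t) \leq \frac{(\gamma_2 - \gamma_1)^2}{4} + \frac{\alpha (\gamma_2 - \gamma_1) c}{1 + \alpha^2} + \frac{c^2}{1+\alpha^2}.
\]
Assuming $\gamma_2 > \gamma_1$ (the case of interest), I would pick $c = c_\varepsilon$ of the form $\kappa \varepsilon$ with $\kappa = \kappa(\alpha, \gamma_2 - \gamma_1) > 0$ chosen small enough to force the two last terms above to be bounded by $\tfrac{(\gamma_2 - \gamma_1)^2}{4} \varepsilon$; this delivers the pointwise bound $\mathcal{F} \leq \tfrac{(\gamma_2 - \gamma_1)^2}{4} (1 + \varepsilon)$ together with $T_\varepsilon = \pi / c_\varepsilon = O(1/\varepsilon)$.

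There is no essential obstacle here: the statement is essentially computational once the angular parametrization is introduced. The key structural observation is that, for a planar trajectory in $\operatorname{span}(e_1, e_2)$, the leading contribution to $\mathcal{F}$ comes from the geometric term $\tfrac{\gamma_2 - \gamma_1}{2} \sin(2\theta)$, which is bounded independently of the speed $\dot\theta$, while the speed-dependent corrections are $O(c)$ and can be absorbed by the $\varepsilon$-margin at the cost of slowing down the trajectory proportionally, which is precisely the origin of the $1/\varepsilon$ bound on $T_\varepsilon$.
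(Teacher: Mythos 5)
Your proof is correct and follows the same overall strategy as the paper: parametrize the planar trajectory by the angle $\theta$, reduce $\mathcal F$ to the expression
\[
\mathcal F = \frac{\dot\theta^2}{1+\alpha^2} + \frac{\alpha(\gamma_2-\gamma_1)}{1+\alpha^2}\sin(2\theta)\,\dot\theta + \frac{(\gamma_2-\gamma_1)^2}{4}\sin^2(2\theta),
\]
and then choose $\dot\theta$ to make it small. The one genuine difference lies in the choice of speed: the paper treats $\mathcal F$ as a quadratic in $\dot\theta$ with $\theta$-dependent coefficients and \emph{solves} $\mathcal F = \tfrac14(\gamma_2-\gamma_1)^2(1+\varepsilon)$ exactly, picking the positive root $\dot\theta = f_\varepsilon(\theta)$, which yields an autonomous ODE and then verifies $f_\varepsilon \gtrsim \varepsilon$. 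You instead freeze $\dot\theta \equiv c$, bound $\sin(2\theta)$ and $\sin^2(2\theta)$ crudely by $1$, and take $c \sim \varepsilon$. Your version is more elementary and achieves the identical asymptotics $T_\varepsilon \lesssim 1/\varepsilon$, whereas the paper's $\theta$-dependent speed gives a tighter (pointwise exact) control of $\mathcal F$ and a somewhat better constant in $T_\varepsilon$; neither advantage is needed for the statement as written. Both arguments tacitly use $\gamma_1 < \gamma_2$ (you flag it explicitly; the paper's $f_\varepsilon$ also degenerates when $\gamma_1=\gamma_2$), so this is not a defect of your version relative to the paper's.
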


 \begin{proof}[Proof of Lemma~\ref{lem:2151}]
Define $m_1(t) = \cos \theta(t)$ and $m_2(t) = \sin \theta(t)$, our goal is to define a suitable function $\theta$, such that $\theta(0)=0$ and $\theta(T_\varepsilon ) = \pi$.

Observe that $\dot e, Dm, m$ are coplanar so that $\dot e\cdot (Dm\wedge m) =0$. Also
\[ |\dot m| = |\dot \theta|, \quad \dot e \cdot Dm = (\gamma_2-\gamma_1) \sin(\theta) \cos(\theta) = \frac{\gamma_2-\gamma_1}{2} \sin(2\theta). \]
Hence
\begin{align*}
\mathcal F  &= \frac{1}{(1+\alpha^2)^2}|\dot \theta|^2 + \left( \frac{\gamma_2-\gamma_1}{2} \sin(2\theta) + \frac{\alpha |\dot \theta|}{1+\alpha^2} \right)^2 \\
& = \frac{1}{1+\alpha^2}|\dot \theta|^2 + \frac{\alpha (\gamma_2-\gamma_1)}{1+\alpha^2} \sin(2\theta) |\dot \theta| + \frac{(\gamma_2-\gamma_1)^2}{4} \sin^2(2\theta).
\end{align*}
This is a quadratic expression in $|\dot \theta|$. Let us solve $\mathcal F = \frac{1}{4} (\gamma_2 - \gamma_1)^2 (1+\varepsilon )$: this is a polynomial equation of degree 2, whose discriminant reads
\begin{align*}
\Delta & = \frac{\alpha^2 (\gamma_2-\gamma_1)^2}{(1+\alpha^2)^2} \sin^2(2\theta) - \frac{4}{(1+\alpha^2)^2} \frac{(\gamma_2-\gamma_1)^2}{4} ( \sin^2(2\theta) - 1 -\varepsilon ) \\
& =\frac{(\gamma_2-\gamma_1)^2}{(1+\alpha^2)^2} \left( 1+ \varepsilon + (\alpha^2-1) \sin^2(2\theta) \right).
\end{align*}
Observe that $\Delta >0$ for all $\theta$, so that we can choose
\[ \dot \theta = \frac{\gamma_2-\gamma_1}{2} \left( -\alpha \sin(2\theta) + \sqrt{1+ \varepsilon + (\alpha^2-1) \sin^2(2\theta) }\right) =: f_\varepsilon (\theta) \]
As 
\[ f_\varepsilon (\theta) \geq \frac{\gamma_2-\gamma_1}{2} \left( \sqrt{\varepsilon + \alpha^2 \sin^2(2 \theta)} - \alpha \sin(2\theta) \right) \geq \frac{\varepsilon }{\sqrt{\varepsilon + \alpha^2} + \alpha}>0, \] we infer that this ODE on $\theta$ admits a unique solution $\theta_\varepsilon $, strictly increasing such that $\dot \theta_\varepsilon \gtrsim \varepsilon $, and so, there exists a unique $T_\varepsilon \lesssim 1/\varepsilon $ such that $\theta_\varepsilon (T_\varepsilon ) = \pi$. This provides the desired trajectory.
\end{proof}

Denote $U_\textnormal{plan} = \frac{\gamma_2 - \gamma_1}{2}$. Lemma \ref{lem:2151} shows in particular that $U_{\text{crit}} \le U_{\text{plan}}$. We are now in position to complete the: 

\begin{proof}[Proof of Theorem~\ref{th:alm_planar}]
    Let $\zeta$ and $Z$ as in Lemma \ref{lem:zeta_ode}: $Z$ satisfies \eqref{eq:Z_ode}. Observe moreover that the equations on $\zeta$ and $Z$ remain unchanged if one replaces $D$ into $D - \lambda I_3$ for some $\lambda \in \mathbb{R}$. We can therefore assume that the spectral norm of $D$ is $\norm{D} = \frac{\gamma_3 - \gamma_1}{2}$ by taking $\lambda = \frac{\gamma_3 + \gamma_1}{2}$. 
    Then, since $\abs{Z} = \abs{m} = 1$ and $\norm{\mathbb{P}_{Z^\perp}} = 1$, we get
    \begin{equation*}
        \abs{\dot{Z}} \leq 2 (1 + \alpha) \norm{D} = (1 + \alpha) (\gamma_3 - \gamma_1).
    \end{equation*}
 On the other hand, according to Lemmas~\ref{lem:2057} and \ref{lem:2151}, we know that for all $U > U_\textnormal{plan}$, there holds $\mathcal{T}_U \leq \frac{C}{U - U_\textnormal{plan}}$ for some constant $C > 0$. Thus, one has
    \begin{equation}\label{eq:metz2114}
        \abs{Z(t) - Z (\mathcal{T}_U)} \leq (1 + \alpha) (\gamma_3 - \gamma_1) \mathcal{T}_U \leq (1 + \alpha) (\gamma_3 - \gamma_1) \frac{C}{U - U_\textnormal{plan}}.
    \end{equation}
    for all $t \in [0, \mathcal{T}_U]$.%
    We also know that $\abs{\zeta} = \abs{\varphi}$. Thus, by introducing $\psi=\varphi/|\varphi|$, one gets $Z = \psi \wedge m$ and $m \wedge Z = \psi$ since $\varphi \cdot m = 0$. 
   A straightforward computation yields that the pair $(m,\psi)$ satisfies
\begin{align}
\dot{\psi}& = \alpha (D\psi-(D\psi\cdot \psi)\psi)- U \sqrt{1+\alpha^2}m-Dm\wedge \psi+D(m\wedge \psi) \notag \\
& \qquad \qquad -\left(\psi\cdot D(m\wedge \psi)\right)\psi-\left((m\wedge \psi)\cdot Dm\right)m, \notag \\
\dot{m} &=  -\alpha \left(Dm-(Dm\cdot m)m\right)+m\wedge Dm+ U \sqrt{1+\alpha^2}\psi. \label{eq:ode_m_with_psi}
\end{align}

    From estimate \eqref{eq:metz2114}, we infer that, for $U$ large enough,
    \begin{equation*}
     \forall t \in [0,\mathcal T_U], \quad   \abs{\psi (t) - m \wedge Z (\mathcal{T}_U)} \leq \frac{C}{U}.
    \end{equation*}
    Putting this in Equation~\eqref{eq:ode_m_with_psi} for $m$, we get some constant $C > 0$ such that for all $U$ large enough and $t \in [0, \mathcal{T}_U]$,
    \begin{equation*}
        \abs{\dot{m} - U \sqrt{1 + \alpha^2} m \wedge Z (\mathcal{T}_U)} \leq C,
    \end{equation*}
    which leads to
    \begin{equation*}
        \abs{\dot{m} \cdot Z (\mathcal{T}_U)} \leq C.
    \end{equation*}
    Since $m (\mathcal{T}_U) \cdot Z (\mathcal{T}_U) = 0$ by definition and using once again that $\mathcal{T}_U \leq \frac{C}{U - U_\textnormal{plan}}$, we get for all $U$ large enough and $t \in [0, \mathcal{T}_U]$
    \begin{equation*}
        \abs{m \cdot Z (\mathcal{T}_U)} \leq \frac{C}{U}.
    \end{equation*}
    However, $p (\mathcal{T}_U) = \varphi (\mathcal{T}_U)$ (since $p (\mathcal{T}_U) \cdot m (\mathcal{T}_U) = 0$ from the orthogonality condition) and $m(\mathcal{T}_U) = - e_1$, and thus the orthogonal space of $V$ is exactly $\operatorname{span} (Z (\mathcal{T}_U))$, which means that $m (t) - \mathbb{P}_V m (t) = (m \cdot Z (\mathcal{T}_U)) Z (\mathcal{T}_U)$. The conclusion easily follows.
\end{proof}

%

\section{Conclusion and perspectives}
\label{sect:Conclusion and perspectives}
\subsection{Extension of our results}
It would be natural to extend our study in several directions. On the one hand, we would like to complete our study of a single ferromagnetic particle of ellipsoidal shape by studying other criteria, and typically a combination of time and cost $L^2$ of control. This problem could read:  

\begin{quote}
\textbf{Second version of the optimal control problem: case of $L^2$ constraints.}
Let $\lambda >0$ and let us assume that $m_0=e_1$. The problem reads
\begin{equation}\label{OCP:minTL2}\tag{$\mathscr{P}_\lambda$}
\mathcal{E}_U^\lambda = \inf_{(T,u)\in \mathcal{O}_{U }}T+\frac{\lambda}{2} \int_0^T \abs{u(t)}^2\, dt,
\end{equation}
where $m_u$ denotes the solution to \eqref{LL:ODE} associated to the control function $u(\cdot)$,
\end{quote} 

or alternatively, if one aims at dropping the effect of the $L^\infty$ constraint on the control, 

\begin{quote}
\textbf{Modified second version of the optimal control problem: case of $L^2$ constraints.}
Let $\lambda >0$ and let us assume that $m_0=e_1$. The problem reads
\begin{equation}\label{OCP:minTL2_bis}\tag{$\mathscr{P}_\lambda$}
\mathcal{E}_U^\lambda = \inf_{(T,u)\in \bigcup_{U\geq 0}\mathcal{O}_{U }}T+\frac{\lambda}{2} \int_0^T \abs{u(t)}^2\, dt,
\end{equation}
where $m_u$ denotes the solution to \eqref{LL:ODE} associated to the control function $u(\cdot)$.
\end{quote} 

Finally, we also plan to study similar issues for more realistic physical systems, for example a network of ellipsoidal particles, possibly rectilinear, as in the model introduced in \cite{agarwal2011control}.

\subsection{Numerical illustrations of our results}
We provide hereafter several numerical illustrations of our results. More precisely, we want to determine numerically the existence or not of an admissible trajectory connecting $e_1$ to $-e_1$, in accordance with what we have found theoretically. Let us first notice that a trajectory $m$ can easily be computed numerically by solving the ODE \eqref{LL:ODE} with the expression \eqref{expr:uopt1809} for the control $u$ where the variable $\varphi$ is given by \eqref{eq1155}.

To initialize both ODE \eqref{LL:ODE} and \eqref{eq1155}, $m(0)=e_1$ is given, but $
\varphi(0)$ is unknown. On the one hand, we overcome this difficulty by noticing that $\varphi(0).e_1=0$, which allows us to have only two unknowns: $\varphi_2(0)$ and $\varphi_3(0)$ to be determined.  On the other hand, working with the normalized  variable $\psi=\varphi/|\varphi|$ enables us to reduce the unknowns to only one angle variable $\vartheta\in [0, 2\pi]$ such that $(\psi_2(0), \psi_3(0))=(\cos(\vartheta), \sin(\vartheta))$. ODE \eqref{LL:ODE} and \eqref{eq1155} are thus replaced by the system \eqref{eq:ode_m_with_psi}.

Numerically, implement a shooting method to determine $\vartheta \in [0, 2\pi]$: namely, for each $\vartheta$, we solve the system \eqref{eq:ode_m_with_psi} on a very large time horizon by a fourth-order Runge-Kutta method  and determine if the trajectory $m$ reaches $-e_1$ on a certain time $T$.

We list below the numerical results, all obtained with $\alpha=0.6$. The initial position $e_1$ is represented with a red circle on the sphere and the goal $-e_1$ with a green star. Parameters $\gamma_i$, $\vartheta$ and control $U$ are specified in the caption of each figure. For each one, we have represented the trajectory $m$ on the sphere as well as the coordinates of $m$ and of the control $u$ as functions of time.

\begin{figure}[htbp]
    \centering
    \subfigure[Admissible trajectory for $\vartheta=0.8976$, large control $U=10$,  \label{fig:non_sym_big_U_sphere}]{\includegraphics[width=0.35\textwidth]{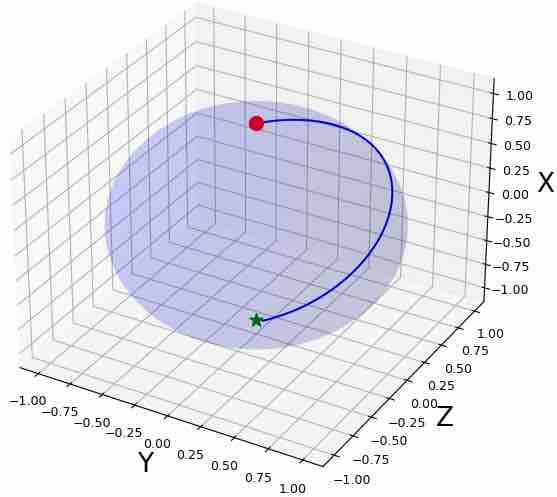}}
    \subfigure[The components of $m$ and $u$ for $\vartheta=0.8976$, large control $U=10$\label{fig:non_sym_big_U_compo}]{\includegraphics[width=0.6\textwidth]{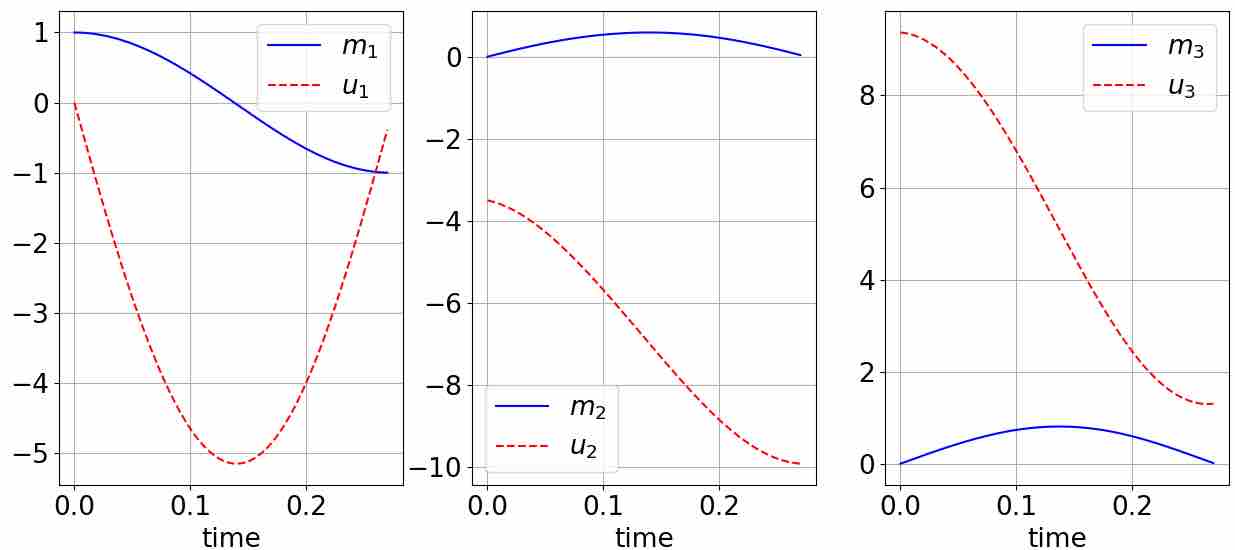}}\\
    \subfigure[Admissible trajectory for $\vartheta=2.2440$, medium control $U=3$\label{fig:non_sym_medium_U_sphere}]{\includegraphics[width=0.35\textwidth]{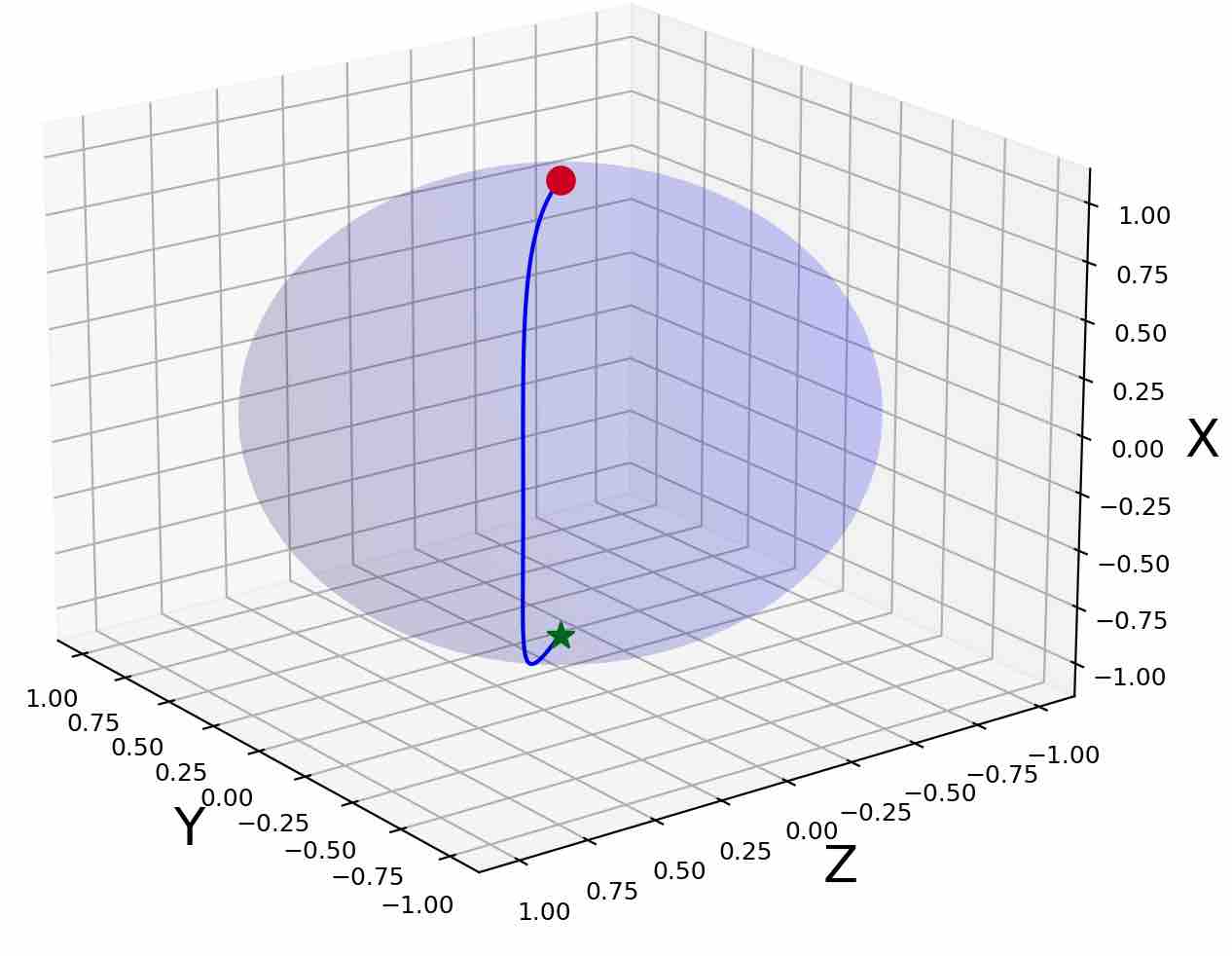}}
    \subfigure[The components of $m$ and $u$ for $\vartheta=2.2440$, medium control $U=3$\label{fig:non_sym_medium_U_compo}]{\includegraphics[width=0.6\textwidth]{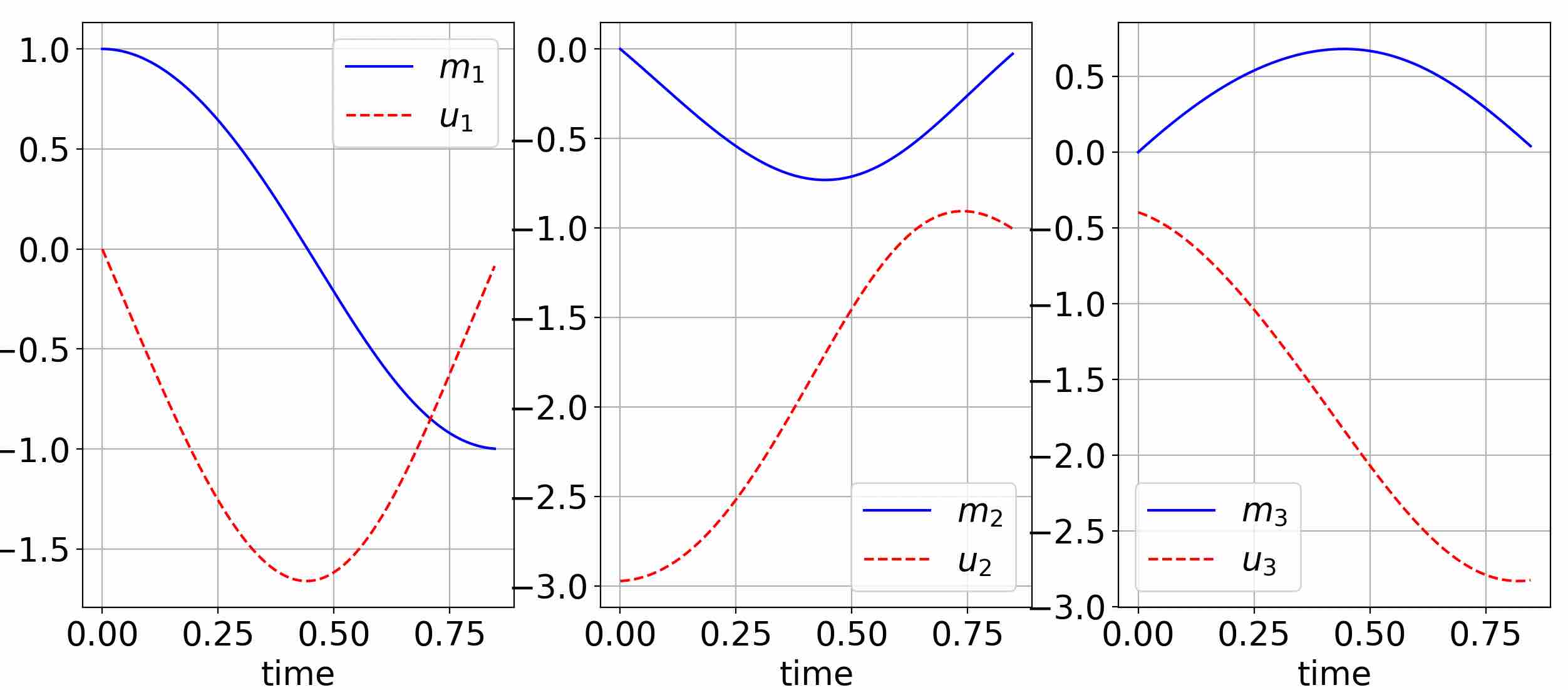}}\\
    \subfigure[Generic trajectory for $\vartheta=0.8976$, small control $U=0.1$ \label{fig:non_sym_small_U_sphere}]{\includegraphics[width=0.35\textwidth]{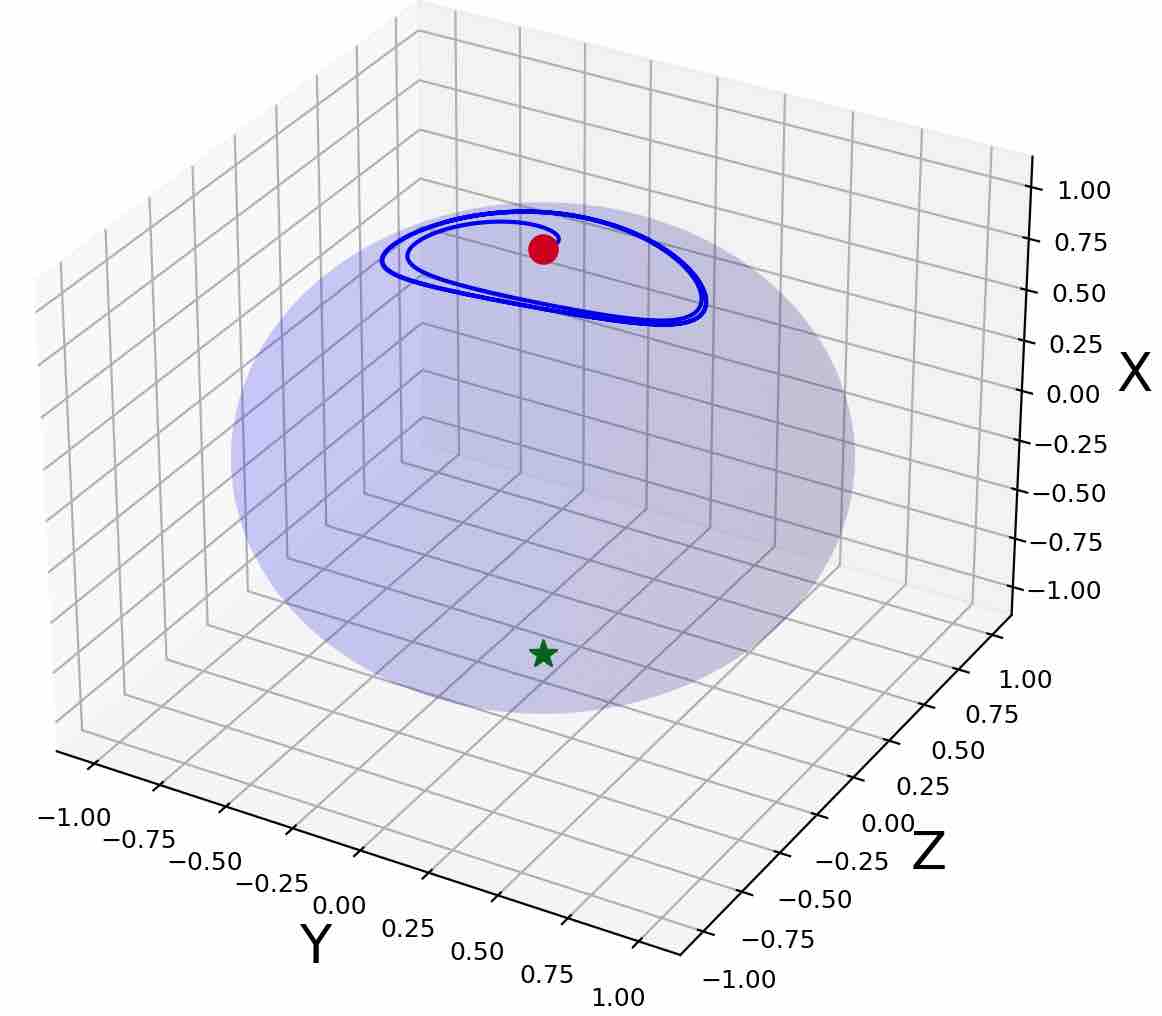}}
    \subfigure[The components of $m$ and $u$ for $\vartheta=0.8976$, small control $U=0.1$ \label{fig:non_sym_small_U_compo}]{\includegraphics[width=0.6\textwidth]{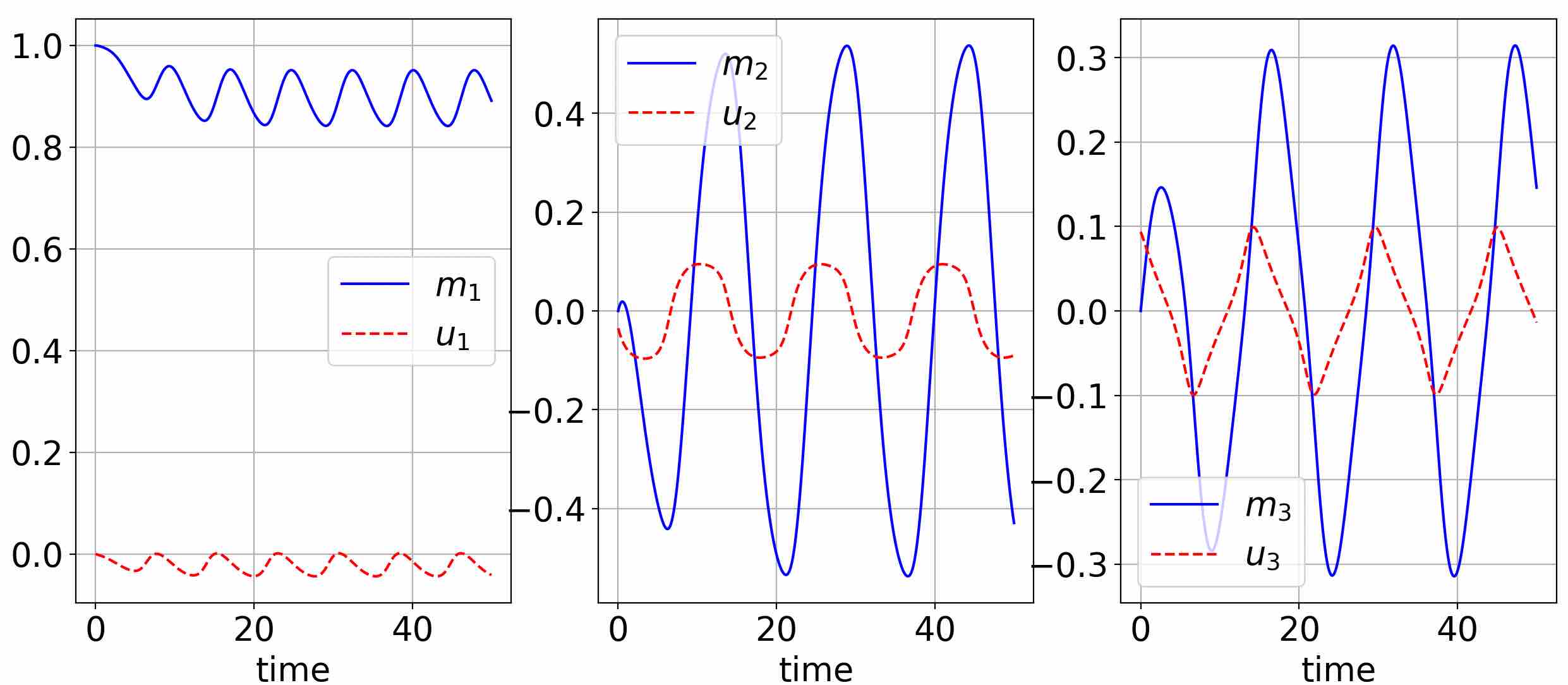}}
    \caption{Non-symmetric test case with $(\gamma_1; \gamma_2; \gamma_3) = (0.2; 0.5; 1)$, top: with a large control $U=10$, middle: with a medium control $U=3$ and bottom: with a small control $U=0.1$}
    \label{fig:non_sym_differents_U}
\end{figure}



First of all, in the non-symmetric case, a threshold on the control appears. If the control is sufficiently large, there is (at least) an initialization of $\psi$ (i.e at least one angle $\vartheta$) which allows to have an admissible trajectory represented in Subfigures \ref{fig:non_sym_big_U_sphere}-\ref{fig:non_sym_big_U_compo}. 
On the contrary, if the control is not large enough, no initialization of $\psi$ will give an admissible trajectory. We have represented for instance one of them in Subfigure \ref{fig:non_sym_small_U_sphere}-\ref{fig:non_sym_small_U_compo} with a particular $\vartheta$ but be aware that they all have the same behavior whatever the initialization of $\psi$: the trajectory remains in the northern half-sphere without enough control.
Figure \ref{fig:non_sym_differents_U} is thus a perfect illustration of Theorem \ref{th:ex_U_crit}. Note that it also helps to illustrate Theorem \ref{th:alm_planar} since the larger $U$ is, the closer the trajectory is to a planar trajectory, as we can see by comparing Subfigures \ref{fig:non_sym_medium_U_sphere}-\ref{fig:non_sym_medium_U_compo} with a medium control and Subfigures \ref{fig:non_sym_big_U_sphere}-\ref{fig:non_sym_big_U_compo} with a larger control.

\begin{figure}[htbp]
    \centering
    \subfigure[Generic trajectory]{\includegraphics[width=0.35\textwidth]{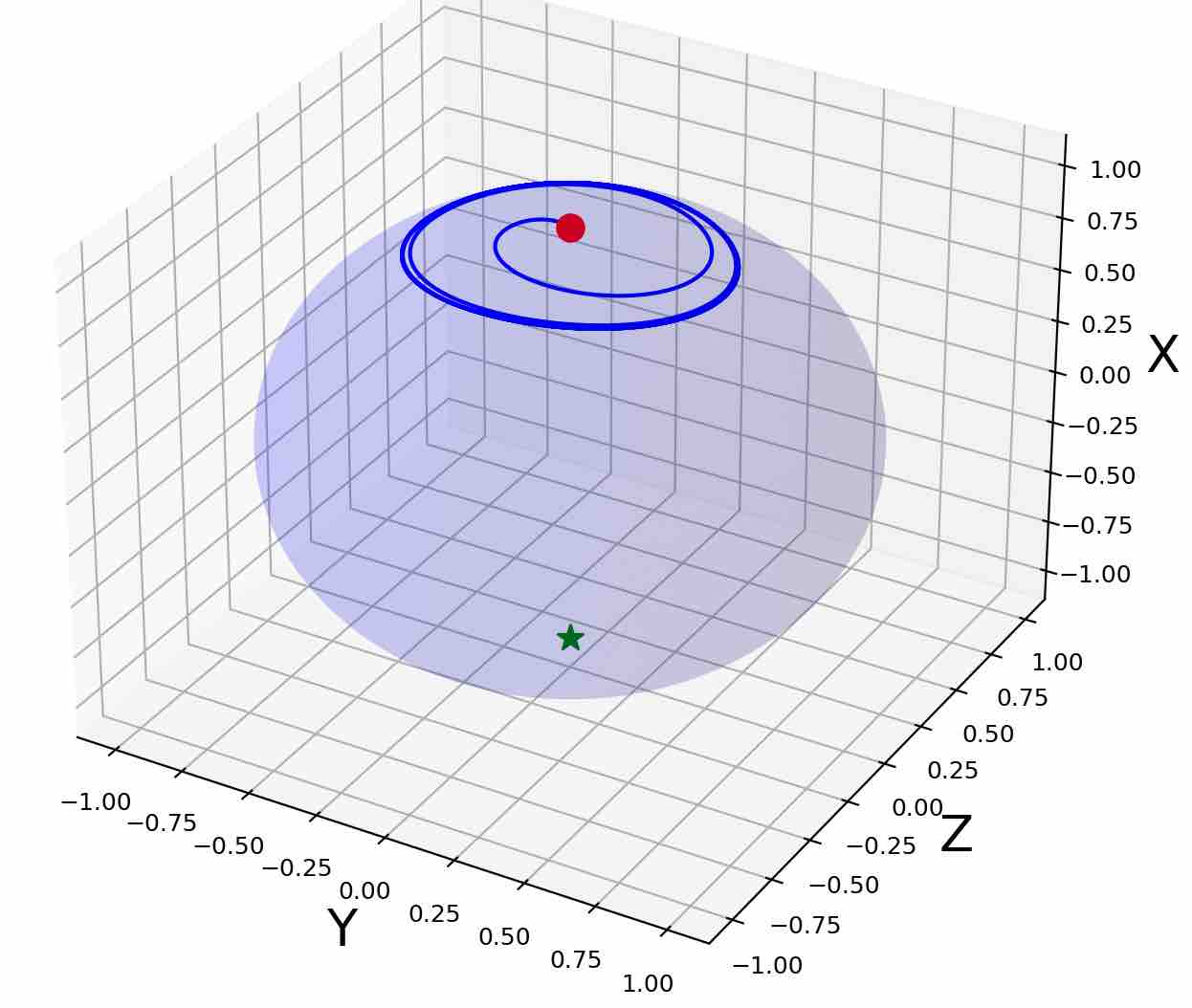}}
    \subfigure[The components of $m$ and $u$]{\includegraphics[width=0.6\textwidth]{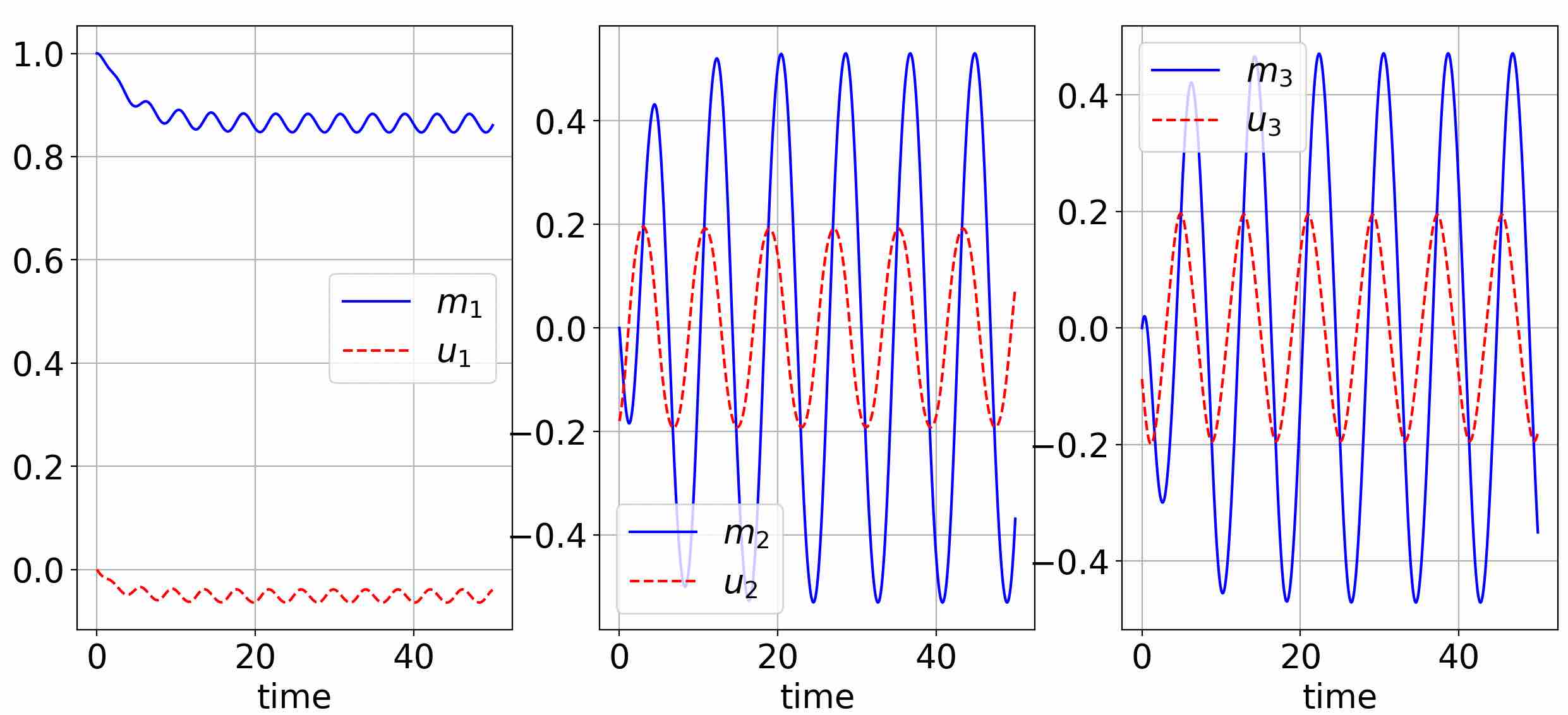}}
    \caption{Non-symmetric test case with $(\gamma_1; \gamma_2; \gamma_3) = (0.0; 0.8; 1)$, $\vartheta=2.5646$ and a small control $U=0.2$}
    \label{fig:petit_U}
\end{figure}
Figure \ref{fig:petit_U} illustrates once again the case of control too weak to reach $-e_1$, for other $\gamma_i$ parameters.

\bigskip

The symmetric case $\gamma_1 = \gamma_2$ is shown in Figure \ref{fig:sym_12}. Even for small controls ($U=0.7$ numerically), there is (at least) one initialization of $\psi$ leading to an admissible trajectory reaching $-e_1$ in finite time. This illustrates well Theorem \ref{theo:gamma1=gamma2}: $U_{\text{crit}} = 0$ in this symmetric case. When $\gamma_2<\gamma_3$ (Subfigures \ref{fig:sym_12_notspherical_sphere}-\ref{fig:sym_12_notspherical_compo}), the admissible trajectories are non planar whereas it is, in the case of a spherical symmetry (Subfigures \ref{fig:sym_12_spherical_sphere}-\ref{fig:sym_12_spherical_compo}) without changing anything other than the symmetry of the test case. This is again in accordance with the second statement of Theorem \ref{theo:gamma1=gamma2}.

\bigskip

\begin{figure}[htbp]
    \centering
    \subfigure[Admissible trajectory with $\gamma_3=1.0$\label{fig:sym_12_notspherical_sphere}]{\includegraphics[width=0.35\textwidth]{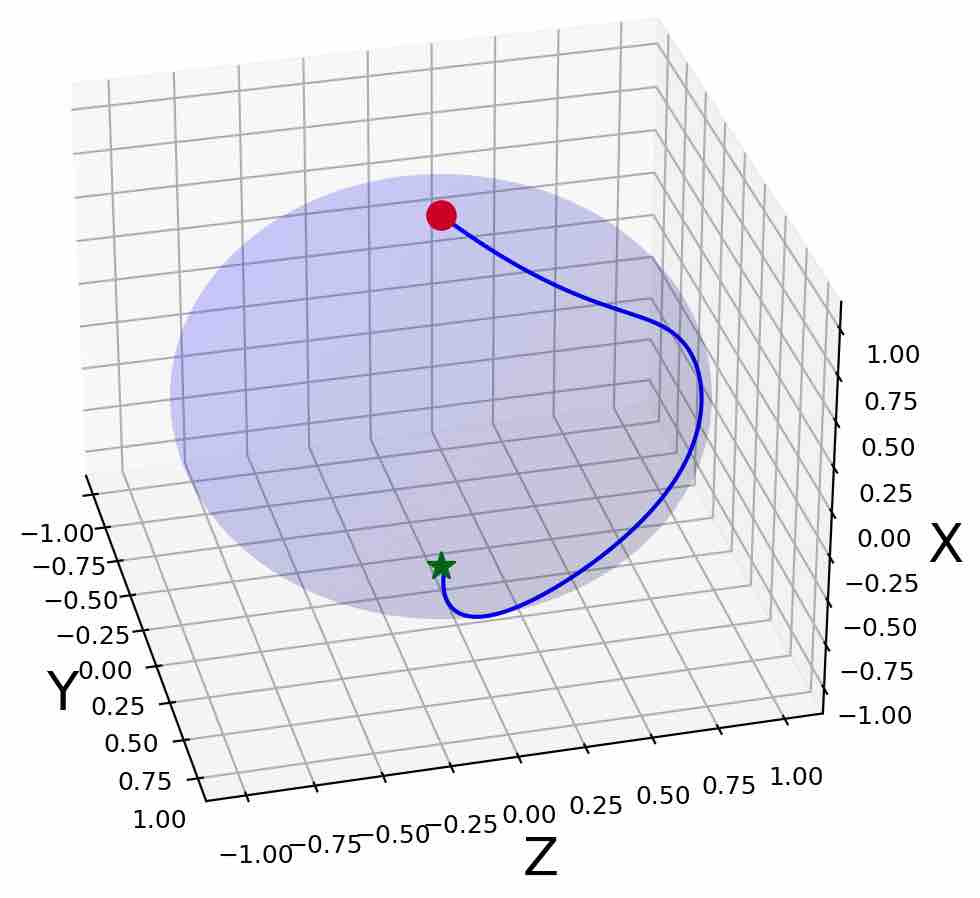}}
    \subfigure[The components of $m$ and $u$ with $\gamma_3=1.0$\label{fig:sym_12_notspherical_compo}]{\includegraphics[width=0.6\textwidth]{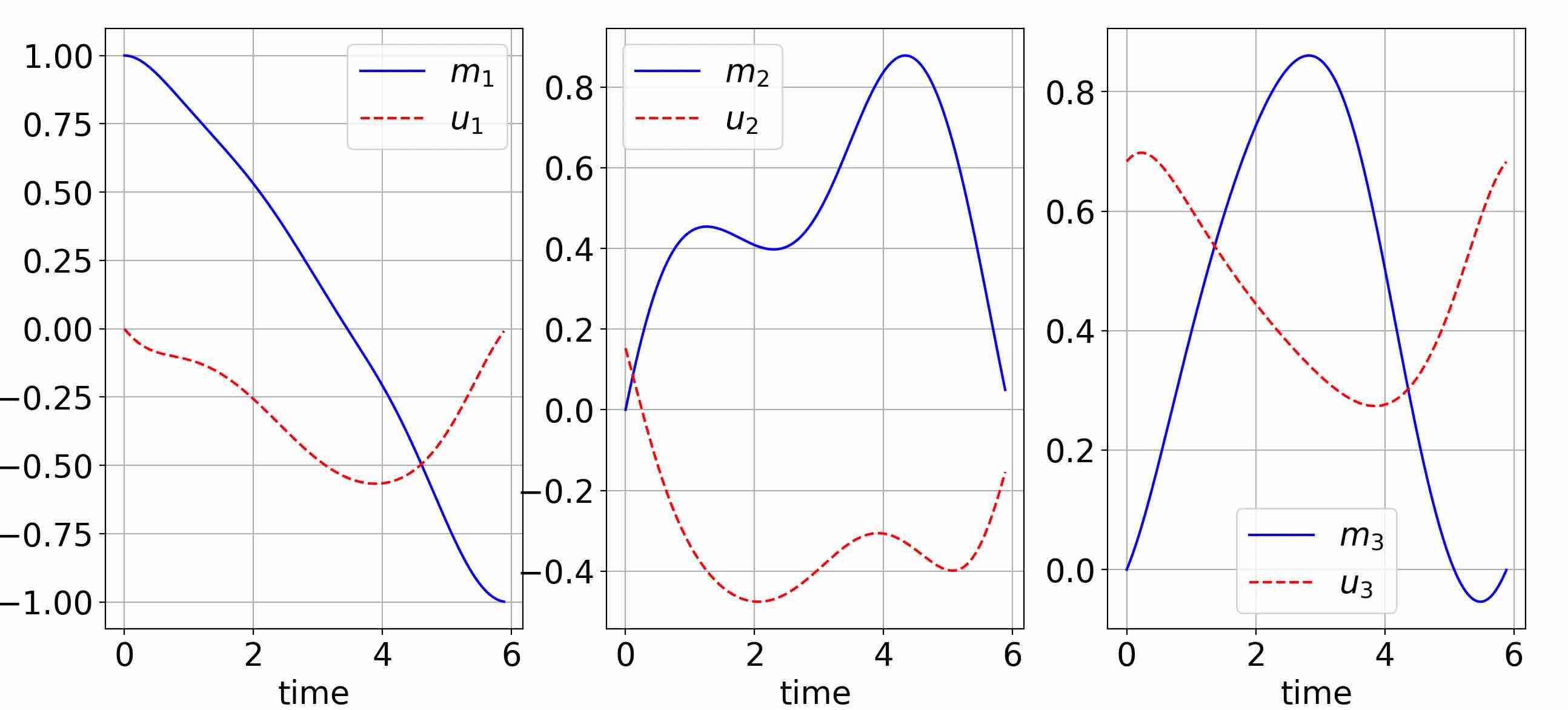}}
    \subfigure[Admissible trajectory with $\gamma_3=0.2$ (spherical case)\label{fig:sym_12_spherical_sphere}]{\includegraphics[width=0.35\textwidth]{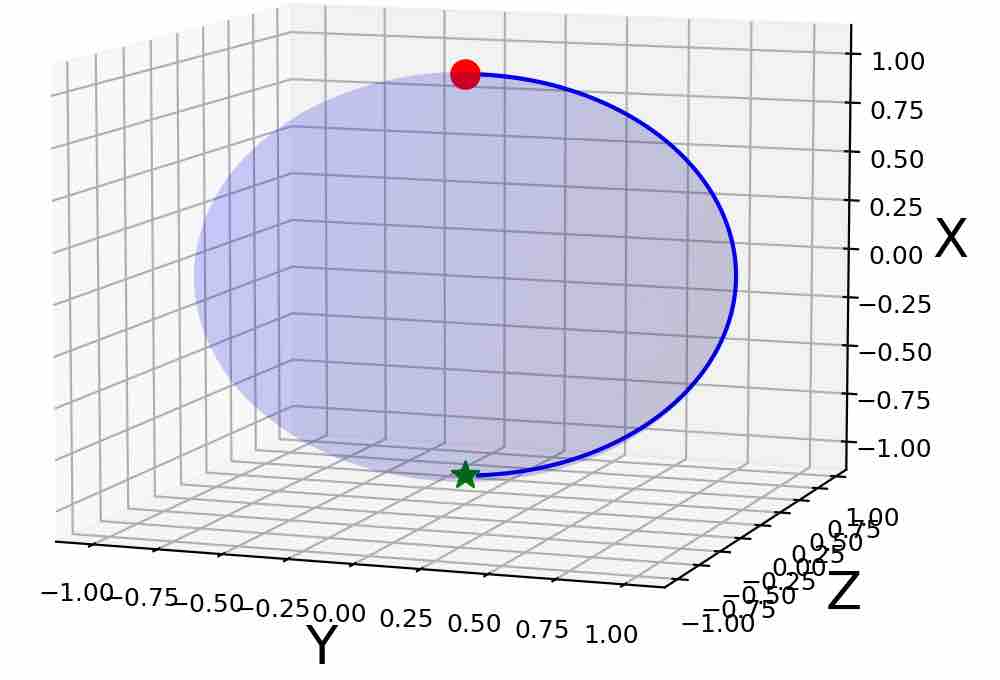}}
    \subfigure[The components of $m$ and $u$ with $\gamma_3=0.2$ (spherical case)\label{fig:sym_12_spherical_compo}]{\includegraphics[width=0.6\textwidth]{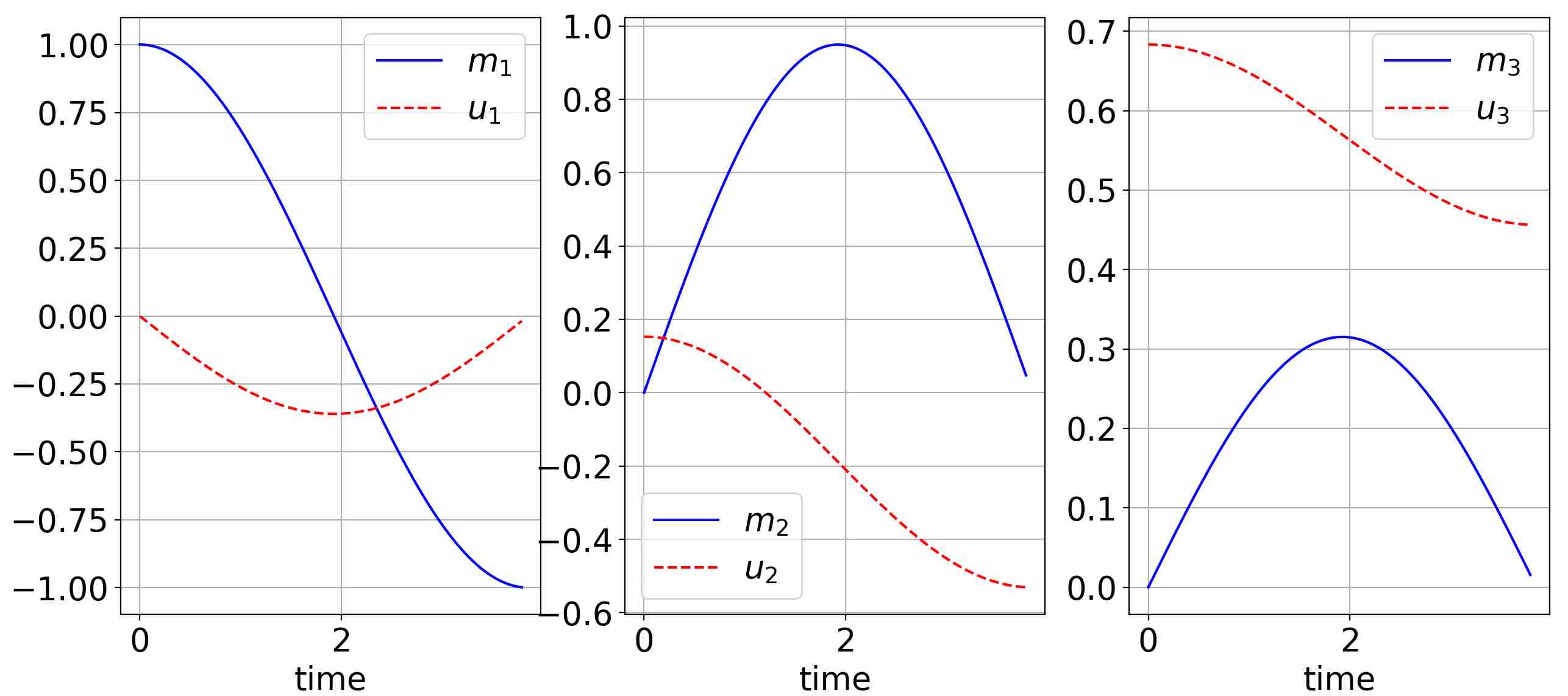}}
    \caption{Symmetric test case $\gamma_1=\gamma_2$ with $(\gamma_1; \gamma_2) = (0.2; 0.2)$, $\vartheta=0.3206$ and a small control $U=0.7$, top: $\gamma_3=1.0$ and bottom: $\gamma_3=0.2$ (spherical case)}
    \label{fig:sym_12}
\end{figure}


\begin{figure}[htbp]
    \centering
    \subfigure[Admissible trajectory]{\includegraphics[width=0.35\textwidth]{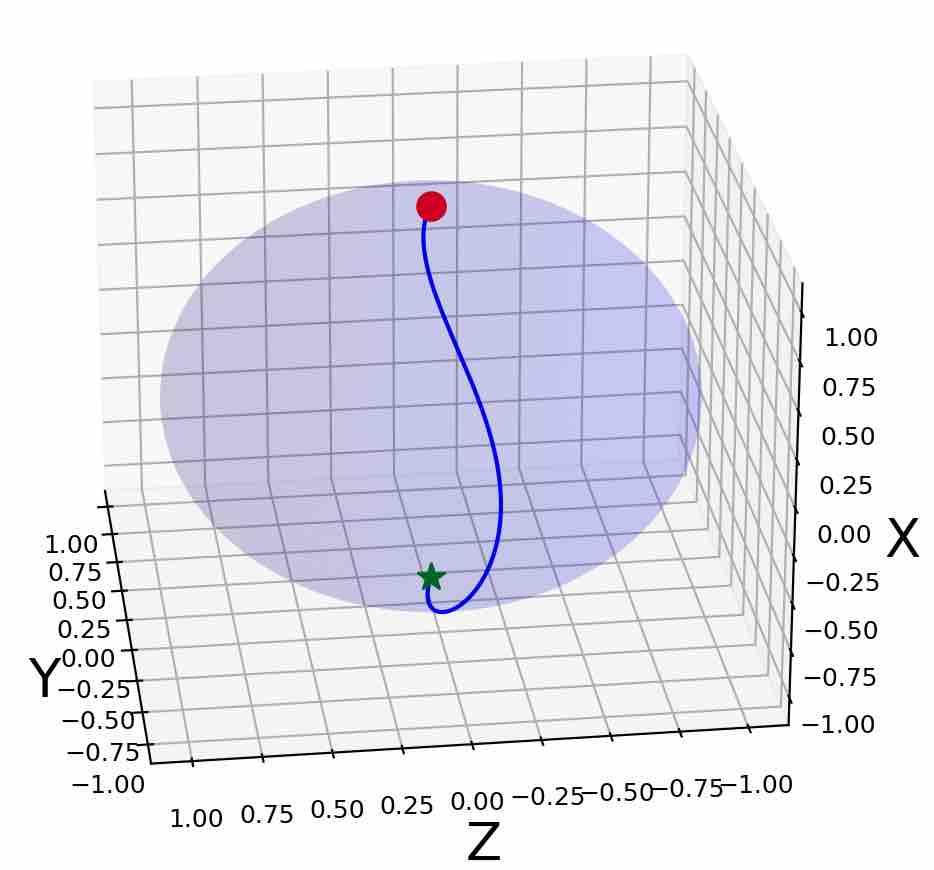}}
    \subfigure[The components of $m$ and $u$\label{fig:sym_23_small_U_compo}]{\includegraphics[width=0.6\textwidth]{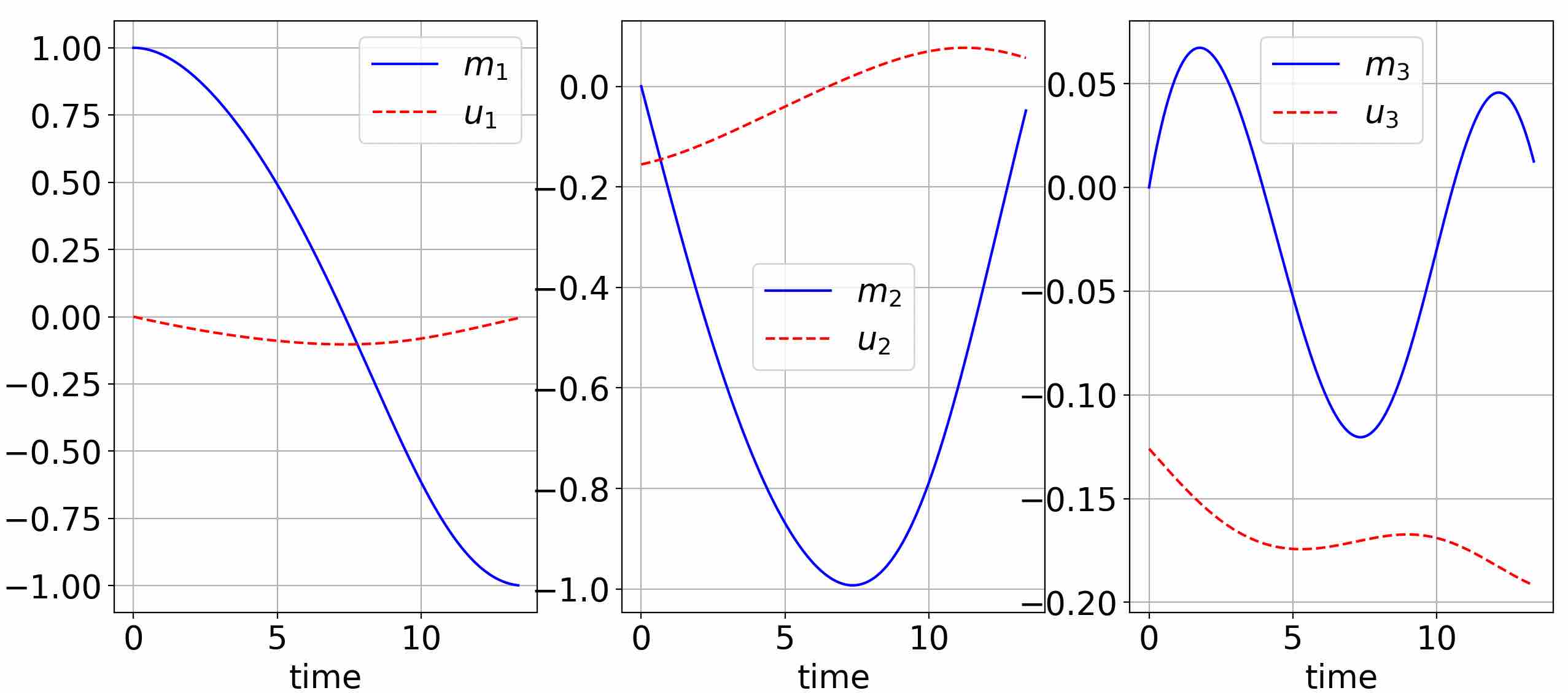}}
    \caption{Symmetric $\gamma_2=\gamma_3$ test case with $(\gamma_1; \gamma_2; \gamma_3) = (0.1; 0.2; 0.2)$, $\vartheta=2.7925$ and a small control $U=0.2$}
    \label{fig:sym_23_small_U}
\end{figure}

For the symmetric case $\gamma_2=\gamma_3$, we see numerically in Figure \ref{fig:sym_23_small_U} that for small values of $U$, an admissible trajectory exists. With the parameters of Figure \ref{fig:sym_23_small_U}, Theorem \ref{theo:gamma2=gamma3} gives the following value for $U_{\text{crit}} = \frac{\alpha}{2\sqrt{1+\alpha^2}}(\gamma_2-\gamma_1)\simeq 0.026$, which effectively allows to have admissible trajectories for very small values of $U$. Note also in Subfigure \ref{fig:sym_23_small_U_compo} that all admissible trajectories reach the target $-e_1$ in a time greater than 14. With the values chosen for Figure \ref{fig:sym_23_small_U}, $\frac{\pi}{\sqrt{1+\alpha^2}\sqrt{U^2-U_{\text{crit}}^2}}\simeq 13.58$ corresponds to the minimum time determined in Theorem \ref{theo:gamma2=gamma3}. Here again, we notice the non-planar character of the trajectory.

\subsection{Conclusion and perspectives}
The obtained results provide a complete characterization of the question of the magnetic moment reversal in minimal time in a simple configuration. Indeed, we have considered here only one ellipsoidal particle. In order to approach more realistic configurations, we wish to analyze a model in which several ferromagnetic particles of ellipsoidal shape are combined to form a network. We refer for example to \cite{MR2833256} for a possible model. After having characterized the set of stationary configurations, we will then ask ourselves the question of controllability in minimal time, in order to go from one stationary state to another.

\section*{Acknowledgements}
The authors were partially supported by the ANR Project MOSICOF. The last author were partially supported by the ANR Project TRECOS.

\appendix
\section{Computation of the demagnetizing field in a ferromagnetic ellipsoid sample}\label{append:demag_ellips}

It is shown in \cite{osborn1945demagnetizing,di2016newtonian} that the demagnetizing tensor $D$ reads $D=\operatorname{diag}([\gamma_1,\gamma_2,\gamma_3])$, where the $\gamma_i$'s are given by
\[
\gamma_i=\frac{a_1a_2a_3}{2} \int_0^{+\infty}\frac{dt}{\sqrt{(a_1+t^2)(a_2^2+t^2)(a_3^2+t^2)}(a_i^2+t^2)}.
\]
Such an expression can be rewritten in terms of the elliptic integral of the second kind $E$, defined by 
\[
E(x,p)=\int_0^x (1-p\sin^2\theta)^{1/2}\, d\theta, \quad x\in \R, \ p\in (0,1).
\]
If $a_1\geq a_2\geq a_3$, then, one has $0\leq \gamma_1\leq \gamma_2\leq \gamma_3\leq 1$ and these coefficients read
\begin{align*} 
\gamma_1 &= 1-\gamma_2-\gamma_3\\
\gamma_2 &= -\frac{a_3}{a_2^2-a_3^2}\left(a_3-\frac{a_1a_2}{(a_1^2-a_2^2)^{1/2}}E\left(\frac{a_2}{a_1},\frac{a_1^2-a_3^2}{a_1^2-a_2^2}\right)\right)\\
\gamma_3 &= \frac{a_2}{a_2^2-a_3^2}\left(a_2-\frac{a_1a_3}{(a_2^2-a_3^2)^{1/2}}E\left(\frac{a_3}{a_1},\frac{a_1^2-a_2^2}{a_1^2-a_3^2}\right)\right).
\end{align*}
In the case where $a_1\geq a_2=a_3$ (\emph{prolate spheroid}), these formula simplify into
 \[
 \gamma_1= -\frac{a_3^2}{(a_1^2-a_3^2)^{3/2}}\left((a_1^2-a_3^2)^{1/2}+a_1\operatorname{argcoth}\left(\frac{a_1}{(a_1^2-a_3^2)^{1/2}}\right) \right), \quad \gamma_2=\gamma_3=\frac{1-\gamma_1}{2}.
\]
In the case where $a_1= a_2\geq a_3$ (\emph{oblate spheroid}), these formula simplify into
\[
 \gamma_3= -\frac{a_1^2}{(a_1^2-a_3^2)^{3/2}}\left((a_1^2-a_3^2)^{1/2}+a_3\arctan\left(\frac{a_3}{(a_1^2-a_3^2)^{1/2}}\right) -\frac{\pi}{2}a_3\right), \quad \gamma_1=\gamma_2=\frac{1-\gamma_3}{2}.
\]

\section{Stability of steady-states for Eq.~\texorpdfstring{\eqref{LL:ODE}}{(3)}}\label{append:e1AS}
Let us first notice that, if $\bar m$ is a steady-state of Equation~\eqref{LL:ODE} and if no control is applied on this system ($h_{\rm ext}=0$), then by orthogonality of the terms in the right-hand side, it satisfies 
\[
h_0(\bar m)=(h_0(\bar m)\cdot \bar m)\bar m\quad \text{and}\quad \bar m\wedge h_0(\bar m)=0. 
\]
Therefore, $\bar m$ is an eigenfunction of $D\bar m$ and we infer that $\bar m=\pm e_j$, $j=1,2,3$ whenever $\gamma_1<\gamma_2\leq \gamma_3$.
\begin{proposition}[Asymptotic stability] \label{prop:asympt_stab}
Let $\gamma_1, \gamma_2, \gamma_3$ be sorted in ascending order $\gamma_1<\gamma_2\leq\gamma_3$. Then, in the absence of any control $h_{\rm ext}$, the steady-state $\pm e_1$ is an asymptotically stable equilibrium state for Equation~\eqref{LL:ODE}. Nevertheless, the steady-states $\pm e_2$ and $\pm e_3$ are linearly unstable steady-states for Equation~\eqref{LL:ODE}.
\end{proposition}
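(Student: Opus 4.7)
My plan is to use the micromagnetic energy as a Lyapunov function for the stable equilibria and a direct linearization at the other equilibria for the instability statements. The two ingredients are essentially independent and can be treated in parallel.

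First, I would introduce the energy $E(m) := \tfrac{1}{2} Dm \cdot m = \tfrac{1}{2}(\gamma_1 m_1^2 + \gamma_2 m_2^2 + \gamma_3 m_3^2)$, and differentiate along solutions of \eqref{LL:ODE} with $h_{\rm ext} = 0$. Using that $Dm \cdot (m \wedge Dm) = 0$, a quick computation gives
\begin{equation*}
    \dot E(m(t)) \;=\; -\alpha \bigl(|Dm|^2 - (Dm \cdot m)^2\bigr) \;\leq\; 0,
\end{equation*}
with equality, by the Cauchy--Schwarz equality case on $|m|=1$, exactly when $Dm$ is colinear to $m$, i.e.\ at the six eigenvectors $\pm e_j$. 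Under the assumption $\gamma_1 < \gamma_2 \leq \gamma_3$, the restriction of $E$ to $\mathbb{S}^2$ attains its strict global minimum $\gamma_1/2$ precisely at $\pm e_1$. Hence on a small enough neighborhood $\mathcal{N} \subset \mathbb{S}^2$ of $e_1$ that excludes the other equilibria, $E - \gamma_1/2$ is a strict Lyapunov function, and the standard Lyapunov argument (or LaSalle's invariance principle applied to the sublevel sets of $E$, whose $\omega$-limit sets must lie in $\{\dot E = 0\}$) yields asymptotic stability of $e_1$; the case of $-e_1$ is identical.

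For the instability of $\pm e_2$ and $\pm e_3$, I would linearize on the tangent plane at each equilibrium (parametrizing $m = e_j + \eta$ with $\eta \perp e_j$, which enforces $|m|=1$ to first order). For $e_2$, writing $\eta = \eta_1 e_1 + \eta_3 e_3$, the linearized ODE reads $\dot\eta = M_2 \eta$ with
\begin{equation*}
    M_2 \;=\; \begin{pmatrix} \alpha(\gamma_2-\gamma_1) & \gamma_3-\gamma_2 \\ \gamma_2-\gamma_1 & \alpha(\gamma_2-\gamma_3) \end{pmatrix},
    \qquad \det M_2 = -(1+\alpha^2)(\gamma_2-\gamma_1)(\gamma_3-\gamma_2) \leq 0.
\end{equation*}
If $\gamma_2 < \gamma_3$, $\det M_2 < 0$ gives one positive real eigenvalue; if $\gamma_2=\gamma_3$, one eigenvalue is $0$ and the trace $\alpha(\gamma_2-\gamma_1) > 0$ is the other. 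In both cases $e_2$ is linearly unstable. An entirely analogous calculation at $e_3$ produces a matrix with trace $\alpha(2\gamma_3-\gamma_1-\gamma_2) > 0$ and determinant $(1+\alpha^2)(\gamma_3-\gamma_1)(\gamma_3-\gamma_2) \geq 0$, hence again an eigenvalue with positive real part. By symmetry $m \mapsto -m$ of the vector field, $\pm e_j$ share the stability type of $e_j$.

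The content of the argument is essentially a computation, and there is no serious obstacle. The only point worth handling carefully is the reduction to the tangent plane: one must verify that the constraint $|m|=1$ eliminates the (trivial) eigendirection normal to the sphere rather than contributing a spurious eigenvalue, which is why I work with $\eta \perp e_j$ from the outset. A secondary mild subtlety is the degenerate case $\gamma_2 = \gamma_3$, where the linearization at $e_2$ has a zero eigenvalue; here the statement is only \emph{linear} instability, so exhibiting one positive eigenvalue is enough and no center-manifold analysis is needed.
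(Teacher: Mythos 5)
Your proof is correct, but for the asymptotic stability of $\pm e_1$ you use a genuinely different mechanism from the paper. The paper linearizes \eqref{LL:ODE} at $e_1$, observes that the resulting $2\times 2$ Jacobian has $\det J = (\alpha^2+1)(\gamma_1-\gamma_2)(\gamma_1-\gamma_3) > 0$ and $\operatorname{Tr} J < 0$, so $e_1$ is a hyperbolic sink, and then invokes the Hartman--Grobman theorem. You instead introduce the micromagnetic energy $E(m)=\tfrac12 Dm\cdot m$ as a strict Lyapunov function: the dissipation identity $\dot E = -\alpha\bigl(|Dm|^2-(Dm\cdot m)^2\bigr)\le 0$, vanishing only at the eigenvectors of $D$, together with the fact that $E|_{\mathbb{S}^2}$ has a strict global minimum at $\pm e_1$ when $\gamma_1<\gamma_2\le\gamma_3$, gives asymptotic stability. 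The Lyapunov route avoids the hyperbolicity hypothesis required by Hartman--Grobman, delivers explicit positively invariant sublevel sets of $E$ (hence a quantitative basin of attraction), and it is the mechanism tacitly behind the paper's own remark that the linearization argument would break down if $\gamma_1=\gamma_2$. The paper's route is shorter once the Jacobian is written down, and it prepares the exact $2\times2$ matrix that is reused elsewhere. For the instability of $\pm e_2,\pm e_3$ both arguments are linearizations; a small but real point in your favour is that when $\gamma_2=\gamma_3$ the paper's determinant $\det J=(\alpha^2+1)(\gamma_2-\gamma_1)(\gamma_2-\gamma_3)$ is zero, not strictly negative as asserted, so the determinant sign alone does not conclude there --- you correctly fall back on the positive trace to exhibit the positive eigenvalue, which closes that corner case.
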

\begin{proof}
Let $h=(h_1, h_2, h_3)^T\in\R^3$ be a small perturbation such that $e_1+h$ is still an admissible magnetization, i.e. on the unit sphere $\mathbb{S}^2\subset\R^3$. We obtain:
\begin{equation*}
\|e_1+h\|^2=1\Leftrightarrow (1+h_1)^2+h_2^2+h_3^2=1
\Leftrightarrow h_1=-\frac{1}{2}\left(h_1^2+h_2^2+h_3^3\right)=\mathcal{O}(\|h\|^2).
\end{equation*}
The unknown $h_1$ is therefore of second order and does not occur in a linearized system of the first order.

By linearizing Equation~\eqref{LL:ODE} around the equilibrium state $e_1$, one has, without any control $u$: 
\begin{equation}
\left\{
\begin{aligned}
\dot{h_2} & =\alpha(\gamma_1-\gamma_2)h_2+(\gamma_1-\gamma_3)h_3+\operatorname{O}(\|h\|^2)\\
\dot{h_3} & =\alpha(\gamma_1-\gamma_3)h_3+(\gamma_2-\gamma_1)h_2+\operatorname{O}(\|h\|^2)\\
\end{aligned}
\right.
\end{equation}

The Jacobian matrix of the linearized system around $e_1$ is therefore :
\[ J=\begin{pmatrix}\alpha(\gamma_1-\gamma_2)&\gamma_1-\gamma_3\\
\gamma_2-\gamma_1&\alpha(\gamma_1-\gamma_3)\end{pmatrix}.\]

Since $\gamma_1<\gamma_2\leq\gamma_3$, one has 
\[
\det (J)=(\alpha^2+1)(\gamma_1-\gamma_2)(\gamma_1-\gamma_3)>0\quad \text{and}\quad 
\operatorname{Tr}(J)=\alpha\left[(\gamma_1-\gamma_2)+(\gamma_1-\gamma_3)\right]<0.
\]
We infer that the two eigenvalues of the Jacobian matrix are of negative real parts. The steady state $e_1$ is therefore linearly stable and is an hyperbolic point (no eigenvalue with zero real part).

Hartman Grobman's theorem \cite{perko2013differential} allows to conclude about the asymptotic stability of $e_1$ for the non-linear Equation~\eqref{LL:ODE} without any control $u$. As for $- e_1$, similar computations give the conclusion. Regarding now the stability of $\pm e_k$, $k=2,3$, notice that a similar computation drives to the following expression of the Jacobian determinant: $\det J=(\alpha^2+1)(\gamma_2-\gamma_1)(\gamma_2-\gamma_3)<0$. The expected conclusion follows. 
\end{proof}

\begin{remark}
If $\gamma_1\leq\gamma_2\leq\gamma_3$, an eigenvalue of the Jacobian matrix may have a zero real part. In which case one can conclude that $e_1$ is linearly (non-asymptotically) stable, but Hartman Grobman's theorem no longer applies to return to the non-linear Equation~\eqref{LL:ODE}. 
\end{remark}

\section{Complement in the case \texorpdfstring{$\gamma_1<\gamma_2$}{gamma1<gamma2}: explicit computations of the constants in the case }\label{append:comp}

Let us use the notations introduced in Remark~\ref{rk:theoremUcrit}. We compute
\begin{equation*}
    A^* A = 
        \begin{bmatrix}
            (1 + \alpha^2) \delta \gamma_-^2 & - 2 \alpha \delta \gamma_- \, \delta \gamma_+ \\
            - 2 \alpha \delta \gamma_- \, \delta \gamma_+ & (1 + \alpha^2) \delta \gamma_+^2
        \end{bmatrix},
\end{equation*}
\begin{equation*}
    \operatorname{Tr}(A^* A) = (1 + \alpha^2) (\delta \gamma_-^2 + \delta \gamma_+^2) > 0, \qquad
    \det(A^* A) = (1 - \alpha^2)^2 \delta \gamma_-^2 \delta \gamma_+^2 \geq 0, 
\end{equation*}
and the discriminant of its characteristic polynomial is
\begin{align*}
    \operatorname{Tr}(A^* A)^2 - 4 \det(A^* A) &= (1 + \alpha^2)^2 (\delta \gamma_-^2 + \delta \gamma_+^2)^2 - 4 (1 - \alpha^2)^2 \delta \gamma_-^2 \delta \gamma_+^2 \\
        &= (1 + \alpha^2)^2 (\delta \gamma_-^2 - \delta \gamma_+^2)^2 + 16 \alpha^2 \delta \gamma_-^2 \delta \gamma_+^2 \\
        &= (1 + \alpha^2)^2 (\delta \gamma_- - \delta \gamma_+)^2 (\delta \gamma_- + \delta \gamma_+)^2 + 16 \alpha^2 \delta \gamma_-^2 \delta \gamma_+^2 > 0.
\end{align*}
and its largest eigenvalue is therefore
\begin{equation*}
    \norm{A}_2^2 = \frac{(1 + \alpha^2) (\delta \gamma_-^2 + \delta \gamma_+^2) + \sqrt{(1 + \alpha^2)^2 (\delta \gamma_- - \delta \gamma_+)^2 (\delta \gamma_- + \delta \gamma_+)^2 + 16 \alpha^2 \delta \gamma_-^2 \delta \gamma_+^2}}{2}.
\end{equation*}
On the other hand, when $\Delta = \alpha^2 (\delta \gamma_+ - \delta \gamma_-)^2 - \delta \gamma_- \delta \gamma_+ \geq 0$, we have
\begin{equation*}
    \lambda_+ = \frac{- \alpha (\delta \gamma_+ + \delta \gamma_-) + \sqrt{\alpha^2 (\delta \gamma_+ - \delta \gamma_-)^2 - \delta \gamma_- \delta \gamma_+}}{2},
\end{equation*}
and therefore, with $\Gamma = \delta \gamma_+^{-1} \delta \gamma_-$,
\begin{align*}
    \frac{\abs{\lambda_+}^2}{\norm{A}_2 (1 + \abs{\alpha}) \delta \gamma_+} &= \frac{1}{\sqrt{2} (1 + \abs{\alpha})} \frac{\Bigl( \alpha (1 + \Gamma) - \sqrt{\alpha^2 (1 - \Gamma)^2 - \Gamma} \Bigr)^2}{\Bigl( (1 + \alpha^2) ( 1 + \Gamma^2 ) + \sqrt{(1+\alpha^2) (\Gamma - 1)^2 (1 + \Gamma)^2 + 16 \alpha^2 \Gamma^2} \Bigr)^\frac{1}{2}}.
\end{align*}
Similarly, when $\Delta < 0$, we obtain
\begin{align*}
    \frac{\abs{\Tr A}^2}{\norm{A}_2 (1 + \abs{\alpha}) \delta \gamma_+} &= \frac{1}{\sqrt{2} (1 + \abs{\alpha})} \frac{\alpha (1 + \Gamma)}{\Bigl( (1 + \alpha^2) ( 1 + \Gamma^2 ) + \sqrt{(1+\alpha^2) (\Gamma - 1)^2 (1 + \Gamma)^2 + 16 \alpha^2 \Gamma^2} \Bigr)^\frac{1}{2}}.
\end{align*}
Remark also that $\Delta = \delta \gamma_+^2 \Bigl( \alpha^2 (1 - \Gamma)^2 - \Gamma \Bigr)$. Therefore, if we define
\begin{equation*}
    \tilde x_0 \coloneqq
        \begin{cases}
            \frac{\abs{\lambda_+}^2}{\norm{A}_2 (1 + \abs{\alpha}) \delta \gamma_+}, \qquad &\textnormal{if } \Delta \geq 0, \\
            \frac{\Tr (A)^2}{4 \norm{A}_2 (1 + \abs{\alpha}) \delta \gamma_+}, \qquad &\textnormal{if } \Delta < 0,
        \end{cases}
\end{equation*}
and then $\tilde x_1 \coloneqq \min{\Bigl( 1, \frac{\sqrt{\tilde x_0}}{3} \Bigr)}$, we obtain that both of them only depend on $\Gamma$ and $\alpha$ and thus so is $\mu_0 = \frac{x_0}{3} x_1 - x_1^3$. Last, the conditions on $U $ becomes $U  \leq \delta \gamma_+ \, \mu_0$, which is in agreement with the invariances on $D$ (invariance by shifting of the $\gamma_i$s, invariance by multiplication of the $\gamma_i$s with respect to a change of time variable and a multiplication of the external field-control).

\bibliographystyle{plain} 
{\footnotesize
\bibliography{biblio}}

\begin{thebibliography}{10}

\bibitem{agarwal2011control}
Shruti Agarwal, Gilles Carbou, St{\'e}phane Labb{\'e}, and Christophe Prieur.
\newblock Control of a network of magnetic ellipsoidal samples.
\newblock {\em Mathematical Control and Related Fields}, 1(2):129--147, 2011.

\bibitem{MR2833256}
Shruti Agarwal, Gilles Carbou, St\'{e}phane Labb\'{e}, and Christophe Prieur.
\newblock Control of a network of magnetic ellipsoidal samples.
\newblock {\em Math. Control Relat. Fields}, 1(2):129--147, 2011.

\bibitem{alouges2009magnetization}
Fran{\c{c}}ois Alouges and Karine Beauchard.
\newblock Magnetization switching on small ferromagnetic ellipsoidal samples.
\newblock {\em ESAIM: Control, Optimisation and Calculus of Variations},
  15(3):676--711, 2009.

\bibitem{5399599}
Francois Alouges, Karine Beauchard, and Mario Sigalotti.
\newblock Magnetization switching in small ferromagnetic ellipsoidal samples.
\newblock In {\em Proceedings of the 48h IEEE Conference on Decision and
  Control (CDC) held jointly with 2009 28th Chinese Control Conference}, pages
  2106--2111, 2009.

\bibitem{MR4419351}
Xin An, Ananta~K. Majee, Andreas Prohl, and Thanh Tran.
\newblock Optimal control for a coupled spin-polarized current and
  magnetization system.
\newblock {\em Adv. Comput. Math.}, 48(3):Paper No. 28, 40, 2022.

\bibitem{brown1963micromagnetics}
William~Fuller Brown.
\newblock {\em Micromagnetics}.
\newblock Interscience, 1963.

\bibitem{MR2375581}
Gilles Carbou, St\'{e}phane Labb\'{e}, and Emmanuel Tr\'{e}lat.
\newblock Control of travelling walls in a ferromagnetic nanowire.
\newblock {\em Discrete Contin. Dyn. Syst. Ser. S}, 1(1):51--59, 2008.

\bibitem{di2016newtonian}
Giovanni Di~Fratta.
\newblock The newtonian potential and the demagnetizing factors of the general
  ellipsoid.
\newblock {\em Proceedings of the Royal Society A: Mathematical, Physical and
  Engineering Sciences}, 472(2190):20160197, 2016.

\bibitem{dubey2019controllability}
Shruti Dubey and Sharad Dwivedi.
\newblock On controllability of a two-dimensional network of ferromagnetic
  ellipsoidal samples.
\newblock {\em Differential Equations and Dynamical Systems}, 27(1):277--297,
  2019.

\bibitem{MR3407264}
Thomas Dunst, Markus Klein, Andreas Prohl, and Ailyn Sch\"{a}fer.
\newblock Optimal control in evolutionary micromagnetism.
\newblock {\em IMA J. Numer. Anal.}, 35(3):1342--1380, 2015.

\bibitem{MR3961301}
Thomas Dunst, Ananta~K. Majee, Andreas Prohl, and Guy Vallet.
\newblock On stochastic optimal control in ferromagnetism.
\newblock {\em Arch. Ration. Mech. Anal.}, 233(3):1383--1440, 2019.

\bibitem{hubert2008magnetic}
Alex Hubert and Rudolf Sch{\"a}fer.
\newblock {\em Magnetic domains: the analysis of magnetic microstructures}.
\newblock Springer Science \& Business Media, 2008.

\bibitem{landau2013electrodynamics}
Lev~Davidovich Landau, JS~Bell, MJ~Kearsley, LP~Pitaevskii, EM~Lifshitz, and
  JB~Sykes.
\newblock {\em Electrodynamics of continuous media}, volume~8.
\newblock Elsevier, 2013.

\bibitem{osborn1945demagnetizing}
John~A Osborn.
\newblock Demagnetizing factors of the general ellipsoid.
\newblock {\em Physical review}, 67(11-12):351, 1945.

\bibitem{parkin2008magnetic}
Stuart~SP Parkin, Masamitsu Hayashi, and Luc Thomas.
\newblock Magnetic domain-wall racetrack memory.
\newblock {\em Science}, 320(5873):190--194, 2008.

\bibitem{perko2013differential}
Lawrence Perko.
\newblock {\em Differential equations and dynamical systems}, volume~7.
\newblock Springer Science \& Business Media, 2013.

\bibitem{MR0186436}
L.~S. Pontryagin, V.~G. Boltyanskii, R.~V. Gamkrelidze, and E.~F. Mishchenko.
\newblock {\em The mathematical theory of optimal processes}.
\newblock A Pergamon Press Book. The Macmillan Company, New York, 1964.
\newblock Translated by D. E. Brown.

\bibitem{MR3348399}
Yannick Privat and Emmanuel Tr\'{e}lat.
\newblock Control and stabilization of steady-states in a finite-length
  ferromagnetic nanowire.
\newblock {\em ESAIM Control Optim. Calc. Var.}, 21(2):301--323, 2015.

\bibitem{takahashi2017ellipsoids}
Diego Takahashi and Vanderlei~C Oliveira~Jr.
\newblock {Ellipsoids (v1. 0): 3-D magnetic modelling of ellipsoidal bodies}.
\newblock {\em Geoscientific Model Development}, 10(9):3591--3608, 2017.

\bibitem{MR1773088}
Augusto Visintin.
\newblock Mathematical models of hysteresis. {A} survey.
\newblock In {\em Nonlinear partial differential equations and their
  applications. {C}oll\`ege de {F}rance {S}eminar, {V}ol. {XIII} ({P}aris,
  1994/1996)}, volume 391 of {\em Pitman Res. Notes Math. Ser.}, pages
  327--340. Longman, Harlow, 1998.

\bibitem{MR3011326}
Baisheng Yan.
\newblock On energy-minimization in ferromagnetism controlled by applied
  fields.
\newblock {\em Ann. Mat. Pura Appl. (4)}, 192(1):115--125, 2013.

\end{thebibliography}

\end{document}